\newcommand\widecheck[1]{%
\savestack{\tmpbox}{\stretchto{%
  \scaleto{%
    \scalerel*[\widthof{\ensuremath{#1}}]{\kern-.6pt\bigwedge\kern-.6pt}%
    {\rule[-\textheight/2]{1ex}{\textheight}}
  }{\textheight}%
}{0.5ex}}%
\stackon[1pt]{#1}{\scalebox{-1}{\tmpbox}}%
}
\DeclareMathOperator{\sHom}{\!{\mathscr{H}\! om}}
\newcommand{\dotimes}{\otimes^{\mathbb L}}
\numberwithin{equation}{section}
\newtheorem{theorem}{Theorem}
\numberwithin{theorem}{section}
\newtheorem{corollary}[theorem]{Corollary}
\newtheorem{proposition}[theorem]{Proposition}
\newtheorem{definition}[theorem]{Definition}
\theoremstyle{definition}
\newtheorem{remark}[theorem]{Remark}
\newcommand{\Hom}{\mathrm{Hom}}
\newcommand{\BC}{\mathcal{BC}}
\newcommand{\Bun}{\mathrm{Bun}}
\newcommand{\Div}{\mathrm{Div}}
\newcommand{\Spd}{\mathrm{Spd}}
\newcommand{\Spa}{\mathrm{Spa}}
\newcommand{\Hloc}{{\mathcal H}{\mathrm{ck}}}
\date{\today}
\title[Geometrization of local Langlands, Motivically]{Geometrization of the local Langlands correspondence, Motivically}
\author{Peter Scholze}
\begin{document}

\begin{abstract} Based on the formalism of rigid-analytic motives of Ayoub--Gallauer--Vezzani \cite{AyoubGallauerVezzani}, we extend our previous work \cite{FarguesScholze} from $\ell$-adic sheaves to motivic sheaves. In particular, we prove independence of $\ell$ of the $L$-parameters constructed there.
\end{abstract}

\maketitle

\tableofcontents

\section{Introduction}

Let $E$ be a nonarchimedean local field with residue field $\mathbb F_q$ of characteristic $p$, and $G$ a reductive group over $E$. The local Langlands correspondence seeks to associate to an irreducible complex smooth representation $\pi$ of $G(E)$ an $L$-parameter, which is a $\widehat{G}(\mathbb C)$-conjugacy class of continuous $1$-cocycles
\[
W_E\to \widehat{G}(\mathbb C)
\]
where $\widehat{G}$ is the Langlands dual group, endowed with an action of $W_E$. We will restrict ourselves in this paper to semisimple $L$-parameters, which are those whose $\widehat{G}(\mathbb C)$-conjugacy class is closed; any $L$-parameter admits a semisimplification (by passing to the minimal orbit in its closure). In \cite{FarguesScholze}, such semisimple $L$-parameters are constructed for arbitrary $G$, but the construction uses \'etale cohomology and hence as usual requires the choice of an auxiliary prime $\ell\neq p$ and deals with $\overline{\mathbb Q}_\ell$-representations (and also requires a choice of $\sqrt{p}\in \overline{\mathbb Q}_\ell$). Thus, for any isomorphism $\iota: \overline{\mathbb Q}_\ell\cong \mathbb C$, we can take $\sqrt{p}\in \overline{\mathbb Q}_\ell$ as the preimage under $\iota$ of the positive root, and get a map
\[
\pi\mapsto \varphi_{\pi,\iota}
\]
from irreducible complex smooth representations $\pi$ of $G(E)$ towards semisimple $L$-parameters. The goal of this paper is to prove the following result, giving canonical semisimple $L$-parameters for complex representations of $G(E)$.

\begin{theorem}\label{thm:A} The association $\pi\mapsto \varphi_{\pi,\iota}$ is independent of $\ell$ and $\iota$.
\end{theorem}

The dream has always been that one can prove such independence of $\ell$ statements by working with motives instead of $\ell$-adic sheaves. However, in very few instances it has been possible to pull this strategy through, as one usually runs into the issue that motives remain inexplicit. Still, it turns out to work in this case.

We briefly recall the construction of the map $\pi\mapsto \varphi_\pi$ from \cite{FarguesScholze}. We studied the geometry of the stack $\mathrm{Bun}_G$ of $G$-bundles on the Fargues--Fontaine curve associated to $E$. This is the functor that takes any perfectoid space $S$ over $k=\overline{\mathbb F}_q$ to the groupoid of $G$-bundles on the relative Fargues--Fontaine curve $X_S = X_{S,E}$. For any auxiliary prime $\ell\neq p$, we defined a stable $\infty$-category $\mathcal D(\mathrm{Bun}_G,\mathbb Z_\ell)$ of $\ell$-adic sheaves on $\mathrm{Bun}_G$. The stack $\mathrm{Bun}_G$ admits a Harder--Narasimhan stratification indexed by the Kottwitz set $B(G)$, and for any $b\in B(G)$, the locally closed stratum $\mathrm{Bun}_G^b\subset \mathrm{Bun}_G$ has
\[
\mathcal D(\mathrm{Bun}_G^b,\mathbb Z_\ell)\cong \mathcal D(G_b(E),\mathbb Z_\ell)
\]
equivalent to the derived $\infty$-category of smooth representations of $G_b(E)$ on $\mathbb Z_\ell$-modules. Thus, $\mathcal D(\mathrm{Bun}_G,\mathbb Z_\ell)$ is obtained by gluing together, in an infinite semi-orthogonal decomposition, categories of smooth representations of various $p$-adic groups $G_b(E)$. In particular, for $b=1$, we get the category of smooth representations of $G(E)$, that is our main interest.

Via the geometric Satake equivalence, we get Hecke operators acting on $\mathrm{Bun}_G$. For simplicity of exposition, assume that $G$ is split in this introduction. Then for any finite set $I$ and $\mathbb Z_\ell[\sqrt{p}]$-representation $V$ of $\widehat{G}^I$, these yield a functor
\[
\mathcal D(\mathrm{Bun}_G,\mathbb Z_\ell[\sqrt{p}])\to \mathcal D(\mathrm{Bun}_G,\mathbb Z_\ell[\sqrt{p}])^{\ast/W_E^I}
\]
towards (continuously) $W_E^I$-equivariant objects. Here $W_E$ denotes the absolute Weil group of $E$. From this data, one can follow V.~Lafforgue \cite{VLafforgueExcursion} to construct a map from the algebra of excursion operators towards the center of $\mathcal D(\mathrm{Bun}_G,\mathbb Z_\ell[\sqrt{p}])$, or in a more structured fashion following Nadler--Yun \cite{NadlerYun} construct the spectral action on $\mathcal D(\mathrm{Bun}_G,\mathbb Z_\ell[\sqrt{p}])$, which recovers the previous map by passing to endomorphisms of the unit object. The outcome is a map
\[
\mathcal O(Z^1(W_E,\widehat{G}_{\mathbb Q_\ell[\sqrt{p}]})/\widehat{G}_{\mathbb Q_\ell[\sqrt{p}]})\to \mathcal{Z}(G(E),\mathbb Q_\ell[\sqrt{p}])
\]
where $Z^1(W_E,\widehat{G}_{\mathbb Q_\ell[\sqrt{p}]})$ is the scheme of condensed $1$-cocycles $W_E\to \widehat{G}$ over the base $\mathbb Q_\ell[\sqrt{p}]$, and $\mathcal Z(G(E),\mathbb Q_\ell[\sqrt{p}])$ denotes the Bernstein center of the category of $\mathbb Q_\ell[\sqrt{p}]$-linear $G(E)$-representations.

The algebra of excursion operators is known to be independent of $\ell$ by \cite{DHKM}; more precisely, the coarse moduli space
\[
Z^1(W_E,\widehat{G}_{\mathbb Q_\ell[\sqrt{p}]})/\! /\widehat{G}_{\mathbb Q_\ell[\sqrt{p}]}
\]
is defined over $\mathbb Q[\sqrt{p}]$ and in fact agrees with the coarse quotient
\[
Z^1(W_E,\widehat{G}_{\mathbb Q[\sqrt{p}]})/\! /\widehat{G}_{\mathbb Q[\sqrt{p}]}
\]
(where this time the cocycles must be locally constant). Similarly, the Bernstein center of the category of smooth representations of $G(E)$ is defined over $\mathbb Q[\sqrt{p}]$. In \cite[Conjecture I.9.5]{FarguesScholze} we conjectured that the map constructed above is independent of $\ell$, i.e.~for each $\ell$ restricts to a map
\[
\mathcal O(Z^1(W_E,\widehat{G}_{\mathbb Q[\sqrt{p}]})/\widehat{G}_{\mathbb Q[\sqrt{p}]})\to \mathcal{Z}(G(E),\mathbb Q[\sqrt{p}])
\]
that is moreover independent of $\ell$. Our main theorem confirms this.

\begin{theorem}\label{thm:B} For any reductive group $G$ over $E$, \cite[Conjecture I.9.5]{FarguesScholze} on independence of $\ell$ of the map from the excursion algebra to the Bernstein center of $G(E)$ holds true.
\end{theorem}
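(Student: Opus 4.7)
The plan is to redo the entire construction of \cite{FarguesScholze} in a motivic 6-functor formalism, producing a single motivic map from the excursion algebra into the center of a category $\mathcal D_{\mathrm{mot}}(\mathrm{Bun}_G)$ of motivic sheaves, and then to compare this with the $\ell$-adic construction via an $\ell$-adic realization functor $R_\ell$ for each $\ell\neq p$. Because the motivic construction is performed once and for all with no choice of auxiliary prime, independence of $\ell$ is automatic provided each $R_\ell$ recovers the $\ell$-adic map of \cite{FarguesScholze}.

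First, I would bootstrap a motivic 6-functor formalism on small v-stacks over $\mathrm{Spd}\,\overline{\mathbb F}_q$ from the rigid-analytic motives of \cite{AyoubGallauerVezzani}: extend from rigid spaces to perfectoid spaces by tilting/perfection, then descend along v-covers to obtain $\mathcal D_{\mathrm{mot}}(X)$ equipped with the six operations $(f^*,f_*,f_!,f^!,\otimes,\sHom)$ and the usual base change compatibilities, in a form parallel to the $\ell$-adic formalism of \cite{FarguesScholze}. In parallel, one constructs $\ell$-adic realizations $R_\ell:\mathcal D_{\mathrm{mot}}(-)\to \mathcal D(-,\mathbb Z_\ell)$ commuting with all six operations; this should follow from the corresponding results of \cite{AyoubGallauerVezzani} by v-descent.

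Next, I would prove motivic avatars of the key geometric inputs of \cite{FarguesScholze}: (i) the identification $\mathcal D_{\mathrm{mot}}(\mathrm{Bun}_G^b)\simeq \mathcal D_{\mathrm{mot}}(G_b(E))$ on each Harder--Narasimhan stratum, obtained as in the $\ell$-adic case from smoothness of $\mathrm{Bun}_G^b\to [\ast/\underline{G_b(E)}]$; and (ii) a motivic geometric Satake equivalence on the $B_{\mathrm{dR}}^+$-affine Grassmannian producing Hecke functors
\[
T_V\colon \mathcal D_{\mathrm{mot}}(\mathrm{Bun}_G)\to \mathcal D_{\mathrm{mot}}(\mathrm{Bun}_G)^{\ast/W_E^I}
\]
for $V\in \mathrm{Rep}(\widehat{G}^I)$, with continuous $W_E^I$-equivariance. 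Once these are in hand, the Lafforgue--Nadler--Yun machinery \cite{VLafforgueExcursion, NadlerYun} runs formally and outputs a motivic map from the excursion algebra into the center of $\mathcal D_{\mathrm{mot}}(\mathrm{Bun}_G)$. Applying $R_\ell$ and using its compatibility with the Hecke functors and with the stratum equivalences, one identifies this motivic map with the $\ell$-adic map of \cite{FarguesScholze} for every $\ell$, proving independence.

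The principal obstacle is the motivic geometric Satake equivalence. The $\ell$-adic proof in \cite{FarguesScholze} relies on a perverse $t$-structure on the $B_{\mathrm{dR}}^+$-affine Grassmannian and a Tannakian reconstruction of $\mathrm{Rep}(\widehat{G}^I)$ via fusion, none of which is readily available motivically (perversity in particular requires a full $t$-structure on motives that one does not expect to have). My strategy would be to characterize the relevant subcategory of Hecke kernels by universal local acyclicity together with fiberwise perversity \emph{after} applying a single $R_\ell$, which pins the subcategory down independently of the chosen $\ell$ by the matching of $\ell$-adic Satake categories, and then to transport the symmetric monoidal equivalence with $\mathrm{Rep}(\widehat{G}^I)$ from its $\ell$-adic counterpart via the fusion product. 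A secondary technical difficulty is verifying enough v-descent for rigid-analytic motives to make the formalism work at the level of Artin v-stacks like $\mathrm{Bun}_G$, which are not themselves small v-sheaves but are quotients of such by $p$-adic groups; this should be routine but needs to be checked carefully.
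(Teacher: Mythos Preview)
Your high-level strategy---redo the construction motivically and compare via $\ell$-adic realizations---is exactly what the paper does. But there is a genuine gap in your treatment of the target of the Hecke functors, and it propagates to the spectral side.

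You assert that the motivic Hecke functors land in $\mathcal D_{\mathrm{mot}}(\mathrm{Bun}_G)^{\ast/W_E^I}$. This is false: the Drinfeld lemma does \emph{not} carry over in that form, because $\mathcal D_{\mathrm{mot}}(\mathrm{Div}^1)$ is not equivalent to $W_E$-representations in $\mathcal D_{\mathrm{mot}}(\overline{\mathbb F}_q)$. The paper's key structural input is an explicit computation of (the mixed Tate part of) $\mathcal D_{\mathrm{mot}}(\mathrm{Div}^1)$ as quasicoherent sheaves on a Weil--Deligne gerbe $X_{\mathrm{Div}^1}$ over a base stack $X$ with $\mathcal D_{\mathrm{qc}}(X)\cong \mathcal D_{MT}(\overline{\mathbb F}_q)$. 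The extra structure beyond $W_E$ is the Tate twist (the ``Deligne'' part of Weil--Deligne), and it is not cosmetic: motivic geometric Satake produces a dual group $\widehat{G}$ over $X_{\mathrm{Div}^1}$ whose simple root spaces are canonically $\mathbb Z[\tfrac 1p](1)$, not $\mathbb Z[\tfrac 1p]$. Consequently the stack of $L$-parameters has to be built as an algebraic stack $\mathrm{Par}_G$ over $X$, parametrizing $\widehat{G}$-bundles on $X_{\mathrm{Div}^1}$; only after choosing a point $\mathrm{Spec}(\mathbb Z[\tfrac 1p])\to X$ (i.e.\ trivializing the Tate twist, which is where the choice of $\sqrt{q}$ enters) does one recover the spaces of \cite{DHKM}. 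Without this, your ``motivic excursion map'' has no well-defined source, and the Lafforgue--Nadler--Yun machinery cannot be run as you describe.

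On geometric Satake, your workaround---declare a motivic sheaf perverse if one chosen $R_\ell$ of it is perverse---is close to circular, since independence of that definition from $\ell$ is essentially what is at stake. The paper instead restricts to the \emph{mixed Tate} subcategory $\mathcal D_{MT}^{\mathrm{ULA}}$ of the local Hecke stack. There the $\ell$-adic realizations are jointly conservative and $t$-exact, so a genuine perverse $t$-structure exists and the Tannakian argument goes through intrinsically. The actual work is then to show that the mixed Tate subcategory is stable under Verdier duality, convolution, and constant term functors; this is nontrivial (it uses Demazure resolutions and a fusion argument for constant terms) and is not addressed by a ``transport along one $R_\ell$'' approach.
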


\begin{remark} As elements of the Bernstein center are uniquely characterized by their action on irreducible smooth representations, one easily sees that this is equivalent to Theorem~\ref{thm:A}.
\end{remark}

Our approach is to repeat the constructions of \cite{FarguesScholze}, replacing $\ell$-adic sheaves with motivic sheaves. This uses the six-functor formalism for rigid-analytic motives developed by Ayoub--Gallauer--Vezzani \cite{AyoubGallauerVezzani}. We will actually rather make use of the slight variant of Berkovich motives \cite{BerkovichMotives}.

The key reason that we can obtain these independence of $\ell$ results using the motivic formalism is that the category $\mathcal D_{\mathrm{mot}}(\mathrm{Div}^1)$ of motivic sheaves on $\mathrm{Div}^1$ can be made completely explicit, relatively to $\mathcal D_{\mathrm{mot}}(k)$, where $k=\overline{\mathbb F}_q$. This is a version of the results of Ayoub and Binda--Gallauer--Vezzani \cite{BindaGallauerVezzani}, reproved in \cite{BerkovichMotives}, concerning $\mathcal D_{\mathrm{mot}}(C)$ for a completed algebraic closure $C$ of $E$. Briefly, $\mathcal D_{\mathrm{mot}}(\mathrm{Div}^1)$ are representations of a suitable version of the Weil--Deligne group, with coefficients in $\mathcal D_{\mathrm{mot}}(k)$. The Weil--Deligne group thus appears here as a relative version of a motivic Galois group. As in the $\ell$-adic case, we use here the strange feature of $\mathrm{Div}^1$ that while it is an incarnation of the curve on which one does geometric Langlands, it is simultaneously covered by one geometric point $\mathrm{Spd}(C)$.

Let us briefly comment on the two most subtle points in adapting everything to motivic sheaves. On the geometric side, the most subtle part is the geometric Satake equivalence. But this has been previously adapted to motivic sheaves by Richarz--Scholbach \cite{RicharzScholbach}, Cass--van den Hove--Scholbach \cite{CassvandenHoveScholbach} and van den Hove \cite{vandenHoveWitt}, at least in the Witt vector affine Grassmannian. There are no problems in adapting their work to the $B_{\mathrm{dR}}^+$-affine Grassmannian required for the Fargues--Fontaine curve. In fact, as we work with \'etale motives, certain arguments are even easier, such as the preservation of mixed Tate sheaves under constant terms.

On the spectral side, there is the issue that the moduli spaces of $L$-parameters used in \cite{DHKM} are canonically defined over $\mathbb Z_\ell$, but their definition over $\mathbb Z[\tfrac 1p]$ relies on certain auxiliary choices (invisible on the level of coarse moduli spaces). We observe here that the motivic formalism actually gives a clear explanation of what is going on. Namely, both on the geometric and on the spectral side, all objects will naturally be linear over $\mathcal D_{\mathrm{mot}}(k)$. This contains $\mathcal D(\mathbb Z[\tfrac 1p])$ fully faithfully, and realizes to $\mathcal D(\mathbb Z_\ell)$ for all $\ell\neq p$, but is much bigger than $\mathcal D(\mathbb Z[\tfrac 1p])$. A priori one runs into the issue that $\mathcal D_{\mathrm{mot}}(k)$ is an unknown category but fortunately it will suffice to understand only a small part of it. Namely, there is the full subcategory
\[
\mathcal D_{MT}(k)\subset \mathcal D_{\mathrm{mot}}(k)
\]
of mixed Tate objects, and the structures most relevant to us are defined over this subcategory. The mixed Tate subcategory has a motivic $t$-structure, and can be explicitly understood as the derived $\infty$-category of quasicoherent sheaves on a certain algebraic stack $\mathrm{MG}_k$ over $\mathbb Z[\tfrac 1p]$, parametrizing line bundles $L$ together with isomorphisms $L/^{\mathbb L} n\cong \mu_n(k)\otimes_{\mathbb Z} R$ for $n$ prime to $p$.\footnote{The letters $\mathrm{MG}$ here stand for ``motivic Galois''.} We construct a canonical Weil--Deligne gerbe $\mathrm{MG}_{\mathrm{Div}^1}$ over $\mathrm{MG}_k$. This is banded by a suitable version of the Weil--Deligne group. The moduli space of $L$-parameters can then be constructed as a canonical algebraic stack over $\mathrm{MG}_k$, parametrizing $\widehat{G}$-bundles on $\mathrm{MG}_{\mathrm{Div}^1}$. After base change along $\mathrm{Spec}(\mathbb Z_\ell)\to \mathrm{MG}_k$, this recovers the previous spaces of $\ell$-adic $L$-parameters. Moreover, there are $\mathbb Z[\tfrac 1p]$-points of $\mathrm{MG}_k$, and the corresponding base change yields algebraic stacks over $\mathbb Z[\tfrac 1p]$ which are the ones constructed by \cite{DHKM}.

{\bf Acknowledgments.} The idea of using rigid-analytic motives to study independence of $\ell$ of $L$-parameters was found while giving the lecture course \cite{ScholzeSixFunctors} on $6$-functor formalisms in the winter term 2022/23. This was of course inspired by the work of Richarz and Scholbach on motivic geometric Satake, and their work in the direction of a motivic version of the results of V.~Lafforgue. I heartily thank David Hansen and the referees for very helpful feedback. The results of this paper were the basis for an ARGOS seminar in the winter term 2024/25, and I heartily thank all the speakers and participants for their feedback. I apologize for the terseness of the presentation.

\section{Basic results}

As discussed in \cite[Section 12]{BerkovichMotives}, the setting used for Berkovich motivic sheaves is slightly incompatible with the setting of small v-stacks. In this paper, we will proceed by maintaining the same geometric objects as in \cite{FarguesScholze} (so in particular, work with v-stacks on the category of perfectoid spaces over $k=\overline{\mathbb F}_q$), and use the pullback functor
\[
a^{\prime\ast}: \mathrm{vStack}\to \mathrm{arcStack}'
\]
taking any $\mathrm{Spa}(R,R^+)$ to $\mathcal M_{\mathrm{arc}}(R)$ (in the setting of analytic Banach rings without fixed norm) in order to use the $6$-functor formalism
\[
X\mapsto \mathcal D_{\mathrm{mot}}(a^{\prime\ast} X).
\]
In particular, we force our sheaf theory to be overconvergent: It takes the same value on $\Spa(R,R^+)$ and $\Spa(R,R^\circ)$. To emphasize this, we will sometimes use the notation
\[
\mathcal D_{\mathrm{mot}}^{\mathrm{oc}}(X) := \mathcal D_{\mathrm{mot}}(a^{\prime\ast} X),
\]
with the superscript $\mathrm{oc}$ indicating that this should be regarded as only the overconvergent part of some $\mathcal D_{\mathrm{mot}}(X)$ that could be defined by using the work of Ayoub--Gallauer--Vezzani \cite{AyoubGallauerVezzani}. As our applications require only overconvergent sheaves, and the category of overconvergent sheaves is in some ways nicer (such as being rigid dualizable on spatial diamonds of finite cohomological dimension), we will stick with this version.

\begin{remark} Another practical reason for this choice is that this overconvergent theory of motivic sheaves admits a realization, for $\ell=p$, into the Hyodo--Kato cohomology constructed in \cite{DeRhamFarguesFontaine};\footnote{This follows from upcoming work of Aoki.} this would not be true of the theory of Ayoub--Gallauer--Vezzani. One could also run the construction of $L$-parameters in \cite{FarguesScholze} for $\ell=p$ using Hyodo--Kato cohomology, and via the Hyodo--Kato realization of motives, our main theorem also applies for $\ell=p$.
\end{remark}

We note that if $n$ is prime to $p$, then by \cite[Proposition 12.3]{BerkovichMotives}
\[
\mathcal D_{\mathrm{mot}}^{\mathrm{oc}}(X)\otimes_{\mathcal D(\mathbb Z[\tfrac 1p])} \mathcal D(\mathbb Z/n)\cong \mathcal D_{\mathrm{et}}^{\mathrm{oc}}(X,\mathbb Z/n)\hookrightarrow \mathcal D_{\mathrm{et}}(X,\mathbb Z/n)
\]
and moreover this equivalence (and embedding) is compatible with tensor and pullback, and hence also with pushforward along proper maps representable in spatial diamonds of finite cohomological dimension (as pullback and proper pushforward preserve overconvergent sheaves). As the motivic $6$-functor formalism is formally extended from this class of maps via the mechanism of \cite[Theorem 5.19]{ScholzeSixFunctors} (or \cite[Theorem 3.4.11]{HeyerMann}), it follows that all $!$-able maps in the motivic $6$-functor formalism are also $!$-able in the (similarly extended) \'etale $6$-functor formalism, and the displayed equivalence and embedding yield maps of $6$-functor formalisms (meaning a transformation of lax symmetric monoidal transformations, i.e.~compatibility with tensor, pullback, and lower-$!$, as also used in \cite[Proposition 4.2.1]{HeyerMann}).\footnote{Note that the construction of \cite[Theorem 5.19]{ScholzeSixFunctors} is functorial in maps of six-functor formalisms. This can be applied to the map
\[
(C,D:\mathrm{Corr}(C,E)\to \mathrm{Pr}^L)\to (C',D':\mathrm{Corr}(C',E')\to \mathrm{Pr}^L)
\]
where $C=C'$ is the $\infty$-category of small v-stacks, $E$ is the class of proper maps representable in spatial diamonds of finite dimtrg, $E'$ is the class of compactifiable maps representable in spatial diamonds of finite dimtrg, $D=\mathcal D_{\mathrm{mot}}^{\mathrm{oc}}$ and $D'=\mathcal D_{\mathrm{et}}(-,\mathbb Z/\ell^n)$. Indeed, this functoriality is actually a consequence of \cite[Theorem 5.19]{ScholzeSixFunctors}, by applying it to the category $\tilde{C}$ which is the total space of the coCartesian fibration classifying the functor $\Delta^1\to \mathrm{Cat}_\infty$ given by $C\to C'$ (so on objects, $\tilde{C}$ is the disjoint union of $C$ and $C'$), with $\tilde{E}=E\sqcup E'$.} Note that \cite{FarguesScholze} and \cite{ECoD} were written before these abstract extensions for $6$-functor formalisms to stacks were available, but whatever definition of $!$-functors was made there by hand agrees with the outcome of those abstract machines (and in any case all arguments in those papers certainly also work when one directly uses the $!$-functors coming out of the abstract machine). In fact, \cite{GulottaHansenWeinstein} shows that the construction of \cite{ECoD} actually yields a six-functor formalism in the sense of Liu--Zheng and Mann, and then the comparison can be deduced from the uniqueness theorem of Dauser--Kuijper \cite{DauserKuijper}. In fact, extending the \'etale $!$-functors to stacky maps in this setting was what led to many of these developments on six-functor formalisms, and was first achieved by Gulotta--Hansen--Weinstein \cite{GulottaHansenWeinstein} based on the work of Liu--Zheng \cite{LiuZhengArtin}, and then more and more streamlined, by Mann \cite[Appendix A.5]{MannThesis}, Heyer--Mann \cite[Theorem 3.4.11]{HeyerMann}, etc.

We will now go through the results of \cite{FarguesScholze} that are about $\ell$-adic sheaves and will see how to prove their motivic versions. This is essentially routine, but one thing to take care of is that quasicompact open immersions do not stay open immersions upon passing to arc-stacks, and are not cohomologically smooth in the $\mathcal D_{\mathrm{mot}}^{\mathrm{oc}}$-formalism.

The first result is \cite[Proposition II.1.21]{FarguesScholze}, whose analogue here is the following. By cohomological smoothness, we always mean in the $\mathcal D_{\mathrm{mot}}^{\mathrm{oc}}$-formalism.

\begin{proposition}\label{prop:div1propersmooth} The map $\Div^1\to \ast$ is proper of finite cohomological dimension, and cohomologically smooth.

Moreover, for any $p$-adic Lie group $H$, the map $\ast/\underline{H}\to \ast$ is cohomologically smooth.
\end{proposition}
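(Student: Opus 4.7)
The plan is to follow the strategy of the $\ell$-adic proof of \cite[Proposition II.1.21]{FarguesScholze}, adapting each step to the $\mathcal D_{\mathrm{mot}}^{\mathrm{oc}}$-formalism. Using the presentation $\Div^1 = \Spd(\breve E)/\varphi^{\mathbb Z}$, with $\breve E$ the completed maximal unramified extension of $E$ and $\varphi$ a Frobenius lift, the quotient map $\Spd(\breve E)\to \Div^1$ is étale (a $\mathbb Z$-torsor). Hence all three assertions (properness, finite cohomological dimension, cohomological smoothness) are equivalent to the corresponding statements for $\Spd(\breve E)\to\ast$.

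Next I would descend further along the pro-étale $I_E$-torsor $\Spd(C)\to \Spd(\breve E)$, where $C = \widehat{\overline E}$ and $I_E$ is the inertia subgroup of the absolute Galois group of $\breve E$. Since $I_E$ is a profinite group, this cover is cohomologically smooth of dimension zero in the motivic formalism, so properness, finite cohomological dimension, and cohomological smoothness of $\Spd(\breve E)\to\ast$ are each inherited from the corresponding properties of $\Spd(C)\to\ast$.

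For $\Spd(C)\to\ast$ I would invoke the explicit description of $\mathcal D_{\mathrm{mot}}(C)$ recalled in the introduction, namely the theorem of Ayoub and Binda--Gallauer--Vezzani reproved in \cite{BerkovichMotives}. Concretely, in the motivic six-functor formalism $\Spa(C)$ behaves like a smooth proper object of dimension one over $\overline{\mathbb F}_q$ with its Tate-motive structure explicitly identified. Thus $\Spd(C)\to\ast$ is proper, of finite cohomological dimension, and cohomologically smooth, with dualizing object the Tate twist $\mathbb Z(1)[2]$. Properness can be seen from the fact that the perfectoid space $\Spa(C^\flat)$ has no nontrivial valuative boundary; finite cohomological dimension then follows from Poincaré duality and the identification of compactly supported cohomology with $\mathbb Z(-1)[-2]$.

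The main obstacle I would expect is controlling the first descent in the overconvergent/arc-stack setting. As the excerpt warns, quasicompact open immersions cease to be open immersions after applying $a^{\prime\ast}$ and are in general not cohomologically smooth for $\mathcal D_{\mathrm{mot}}^{\mathrm{oc}}$, so one cannot freely exhaust $\Spd(\breve E)$ or $\Spd(C)$ by affinoid pieces as in the $\ell$-adic argument. Instead one must argue directly on the arc-stack $a^{\prime\ast}\Spd(\breve E)$ and invoke pro-étale or v-descent from $\Spd(C)$ to transport properties down to $\Spd(\breve E)$. Once this descent is in place, transferring each of the three properties from $\Spd(C)$ through $\Spd(\breve E)$ and then to $\Div^1$ via the étale $\varphi^{\mathbb Z}$-quotient is routine.
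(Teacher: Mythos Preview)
There are two genuine gaps in your argument.

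First, properness does not transfer as you claim. The $\mathbb Z$-torsor $\Spd(\breve E)\to \Div^1$ is \'etale but not proper, and $\Spd(\breve E)\to\ast$ is not proper: after base change to a geometric point it looks (in equal characteristic) like a punctured open unit disc. Properness of $\Div^1\to\ast$ is a genuine feature of the Frobenius quotient, not something inherited from upstairs; the paper simply cites \cite{FarguesScholze} for this and does not reprove it.

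Second, and more seriously, your descent along the $I_E$-torsor $\Spd(C)\to \Spd(\breve E)$ fails. You assert that ``since $I_E$ is a profinite group, this cover is cohomologically smooth of dimension zero in the motivic formalism,'' but this is false. The tame quotient $I_E^t\cong \prod_{\ell\neq p}\mathbb Z_\ell$ is not pro-$p$, and in the $\mathbb Z[\tfrac 1p]$-linear motivic formalism each factor $\mathbb Z_\ell$ contributes cohomological dimension~$1$ after $\ell$-adic realization; over all $\ell$ at once the classifying stack $\ast/\underline{I_E}$ has infinite cohomological dimension and is not cohomologically smooth. This is exactly why the paper does \emph{not} pass to $C$: in equal characteristic it works directly with $\Spd(E)$, which over a geometric point is a punctured open unit disc, and in mixed characteristic it factors through $\ast/\underline{\mathbb Z_p}$ using a perfectoid $\mathbb Z_p$-extension $E_\infty/E$. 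The point is that $\mathbb Z_p$ is a $p$-adic Lie group, so $\ast/\underline{\mathbb Z_p}\to\ast$ is cohomologically smooth in this $\mathbb Z[\tfrac 1p]$-linear setting (via \cite{HeyerMann}); replacing $\mathbb Z_p$ by the full inertia $I_E$ destroys this. Your worry about overconvergence and open immersions is a red herring here---the real obstruction is the choice of group.
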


\begin{proof} We have $\Div^1=\Spd(\breve{E})/\phi^{\mathbb{Z}}$. The properness was already proved in \cite{FarguesScholze}. For the rest, we can argue on the cover $\Spd(\breve{E})\to \Div^1$, so we have to see that $\Spd(\breve{E})\to \ast$ is $!$-able and cohomologically smooth. If $E$ has equal characteristic, then this is after base change to a geometric point given by a punctured open unit disc, which is indeed $!$-able and cohomologically smooth. In the mixed characteristic case, we can choose a perfectoid $\mathbb Z_p$-extension $\breve{E}_\infty/\breve{E}$, and then consider the composite
\[
\Spd(\breve{E})\to \ast/\underline{\mathbb Z_p}\to \ast.
\]
The first map is locally given by $\Spd(\breve{E}_\infty)\to \ast$ where $\Spd(\breve{E}_\infty)\to \ast$ reduces to the equal characteristic case already considered, via tilting. It remains to see that $\ast/\underline{\mathbb Z_p}\to \ast$ is also cohomologically smooth, which is a special case of the assertion for any $p$-adic Lie group.

This situation of classifying stacks is discussed in great detail by Heyer--Mann \cite[Section 5]{HeyerMann}, see in particular Example 5.3.21 for the case relevant here. (Recall that $\mathcal D_{\mathrm{mot}}$ is $\mathbb Z[\tfrac 1p]$-linear.) Also note that there is a map of $6$-functor formalisms from condensed anima with the sheaf theory considered there, towards arc-stacks with the $\mathcal D_{\mathrm{mot}}$-formalism here, induced from the functor taking any profinite set $S$ to $\underline{S}$, and the symmetric monoidal functor $\mathcal D(S,\mathbb Z[\tfrac 1p])\to \mathcal D_{\mathrm{mot}}^{\mathrm{oc}}(\underline{S},\mathbb Z[\tfrac 1p])$ arising as the pullback of sheaves on the profinite set $S$ to arc-sheaves over $\underline{S}$. Moreover, cohomological smoothness is preserved under maps of $6$-functor formalisms (where maps are required to commute with pullback, tensor, and lower-$!$, but not their right adjoints), by the diagrammatic characterization \cite[Definition 4.5.1]{HeyerMann} of cohomological smoothness in terms of being suave with invertible suave dualizing complex, and the functoriality \cite[Proposition 4.2.1]{HeyerMann} of the $2$-category of kernels (so that adjoints, and hence suave objects and suave duals, are preserved).
\end{proof}

The next result is \cite[Proposition II.2.5]{FarguesScholze}.

\begin{proposition}\label{prop:BCsmooth} Let $\lambda\in \mathbb Q$.
\begin{enumerate}
\item[{\rm (i)}] If $\lambda>0$, then $\BC(\mathcal O(\lambda))\to \ast$ is cohomologically smooth.
\item[{\rm (ii)}] If $\lambda<0$, then $\BC(\mathcal O(\lambda)[1])\to \ast$ is cohomologically smooth.
\end{enumerate}
\end{proposition}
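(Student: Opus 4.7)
The plan is to follow the argument of \cite[Proposition II.2.5]{FarguesScholze}, reading cohomological smoothness throughout in $\mathcal D_{\mathrm{mot}}^{\mathrm{oc}}$, and reducing both parts to a short list of motivic inputs.

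For part (i), I would reduce by dévissage to the case $\lambda = 1$. Writing $\lambda = d/h$ with $\gcd(d,h) = 1$ and $d,h > 0$, pushforward along the degree-$h$ étale cover $\pi_h : X_{E_h} \to X_E$ (with $E_h/E$ unramified of degree $h$) identifies $\pi_{h,*}\mathcal O_{X_{E_h}}(d) \cong \mathcal O_{X_E}(d/h)$ and hence $\BC_E(\mathcal O(d/h)) \simeq \BC_{E_h}(\mathcal O(d))$ as v-sheaves. Since $\Spa(\mathbb F_{q^h}) \to \Spa(\mathbb F_q)$ is finite étale, it suffices to treat integer $\lambda = d \in \mathbb Z_{>0}$. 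Here the v-local short exact sequence
\[
0 \to \mathcal O(d-1) \to \mathcal O(d) \to i_{\infty,*}\mathcal O \to 0
\]
(cut out by a section of $\mathcal O(1)$ with zero locus a degree-one Cartier divisor $\infty$), together with cohomological smoothness of $\BC(i_{\infty,*}\mathcal O)\to\ast$ (which unwinds to $\Div^1$ plus motivic $\mathbb A^1$-invariance, handled by Proposition~\ref{prop:div1propersmooth} and \cite{BerkovichMotives}), reduces by induction on $d$ to $d = 1$. The base case is the actual geometric content: $\BC(\mathcal O(1))$ is representable by a perfectoid open disc in equal characteristic, and pro-étale locally so in mixed characteristic via the generic fiber of the universal cover of a Lubin--Tate formal group; its motivic cohomological smoothness then follows from the analytic-motivic $\mathbb A^1$-invariance of \cite{BerkovichMotives}.

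For part (ii), I would deduce it from part (i) by a dévissage. Since $\mathcal O(-\lambda)$ has positive slope and is globally generated, choose a surjection $\mathcal O_X^n \twoheadrightarrow \mathcal O(-\lambda)$ with locally free kernel $\mathcal K$, and dualize to obtain
\[
0 \to \mathcal O(\lambda) \to \mathcal O_X^n \to \mathcal K^\vee \to 0,
\]
in which $\mathcal K^\vee$ has strictly positive Harder--Narasimhan slopes. The long exact sequence of coherent cohomology on the Fargues--Fontaine curve (using $H^0(\mathcal O(\lambda))=0$, $H^1(\mathcal O_X^n)=0$, $H^1(\mathcal K^\vee)=0$) identifies
\[
\BC(\mathcal O(\lambda)[1]) = H^1(\mathcal O(\lambda)) \simeq \BC(\mathcal K^\vee)/\underline{E}^n,
\]
a free quotient. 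The Cartesian square
\[
\begin{tikzcd}
\BC(\mathcal K^\vee) \ar[r] \ar[d] & \BC(\mathcal O(\lambda)[1]) \ar[d] \\
\ast \ar[r] & \ast/\underline{E}^n
\end{tikzcd}
\]
then reduces cohomological smoothness of the right vertical (over $\ast/\underline{E}^n$) to that of $\BC(\mathcal K^\vee)\to\ast$, which is given by part (i) applied to the summands of the Fargues--Fontaine decomposition $\mathcal K^\vee \cong \bigoplus_i \mathcal O(\mu_i)$ with $\mu_i>0$. Composing with the Heyer--Mann-smooth $\ast/\underline{E}^n \to \ast$ used in Proposition~\ref{prop:div1propersmooth} yields the result.

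The main obstacle is the base case $d = 1$ of part (i): verifying, in the overconvergent Berkovich motivic formalism, cohomological smoothness of the perfectoid (equal characteristic) or Lubin--Tate (mixed characteristic) model of $\BC(\mathcal O(1))$. Once this is in hand, the remainder of the argument — descent along $E_h/E$, inductive dévissage via short exact sequences of bundles on the FF curve, and descent through the $\underline{E}^n$-torsor — is formal from the $6$-functor formalism together with the Heyer--Mann cohomological smoothness of $\ast/\underline{G} \to \ast$ for $p$-adic Lie groups $G$.
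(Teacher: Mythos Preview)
Your treatment of (i) matches the paper's: pass to an unramified extension to make $\lambda$ integral, then induct via the short exact sequence with skyscraper quotient at a chosen untilt. One simplification: you need not invoke $\Div^1$ in the inductive step---after base change to a geometric point $\Spa(C)$ and a fixed untilt $C^\sharp$, the quotient is just $(\mathbb A^1_{C^\sharp})^\diamond$, whose motivic smoothness is immediate.

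For (ii) the paper takes a slightly different route: it again reduces to integers and then inducts one step at a time via the sequences
\[
0\to (\mathbb A^1_{C^\sharp})^\diamond\to \BC(\mathcal O(n-1)[1])\to \BC(\mathcal O(n)[1])\to 0
\]
for $n\leq -1$, with base case $\BC(\mathcal O(-1)[1])\cong (\mathbb A^1_{C^\sharp})^\diamond/\underline{E}$. Your one-shot presentation via a surjection $\mathcal O_X^n\twoheadrightarrow \mathcal O(-\lambda)$ is a valid alternative, but it contains a gap: the claim that $\mathcal K^\vee$ has \emph{strictly} positive Harder--Narasimhan slopes does not follow automatically. From $\mathcal K\hookrightarrow \mathcal O_X^n$ one only deduces that all slopes of $\mathcal K$ are $\leq 0$; if the chosen surjection factors through a proper direct summand $\mathcal O_X^{n-1}$ then $\mathcal K$ acquires an $\mathcal O_X$-summand, $\mathcal K^\vee$ has a slope-$0$ piece, and part (i) no longer applies to $\BC(\mathcal K^\vee)$. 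The fix is easy: take the $n$ generating sections to be $E$-linearly independent in $H^0(X,\mathcal O(-\lambda))$; any saturated slope-$0$ subbundle $\mathcal O_X\hookrightarrow \mathcal K\subset \mathcal O_X^n$ would then give a nontrivial $E$-linear relation among them. With this adjustment your argument goes through, and the paper's step-by-step induction simply sidesteps the issue by keeping each new fibre equal to $(\mathbb A^1_{C^\sharp})^\diamond$.
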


\begin{proof} Replacing $E$ by a finite unramified extension, we can assume that $\lambda=n$ is an integer. For $n=1$, we know that $\BC(\mathcal O(1))\to \ast$ is an open unit disc, which is cohomologically smooth. In general, we can base change to some geometric point $\Spa(C)$, pick an untilt $C^\sharp$ over $E$, and use an exact sequence
\[
0\to \BC(\mathcal O(n))\to \BC(\mathcal O(n+1))\to (\mathbb A^1_{C^\sharp})^\diamond\to 0
\]
to argue by induction, using cohomological smoothness of $\mathbb A^1$.

For negative $n$, we start from the similar exact sequence for $n=-1$ which yields
\[
\BC(\mathcal O(-1)[1]) = (\mathbb A^1_{C^\sharp})^\diamond/\underline{E}.
\]
In particular, we can consider the composite
\[
\BC(\mathcal O(-1)[1])\to \ast/\underline{E}\to \ast
\]
where both maps are known to be cohomologically smooth. Finally, induct to more negative $n$ by using the exact sequences
\[
0\to (\mathbb A^1_{C^\sharp})^\diamond\to \BC(\mathcal O(n-1)[1])\to \BC(\mathcal O(n)[1])\to 0.\qedhere
\]
\end{proof}

More generally, we have \cite[Proposition II.3.5]{FarguesScholze}.

\begin{proposition}\label{prop:BCsmoothgeneral} Let $S$ be a perfectoid space over $\mathbb F_q$ and let $[\mathcal E_1\to \mathcal E_0]$ be a map of vector bundles on $X_S$ such that at all geometric points of $S$, all Harder--Narasimhan slopes of $\mathcal E_1$ are negative and all Harder--Narasimhan slopes of $\mathcal E_0$ are positive. Then
\[
\BC([\mathcal E_1\to \mathcal E_0])\to S
\]
is cohomologically smooth.
\end{proposition}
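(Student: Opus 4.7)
The plan is to reduce, in two stages, to the line-bundle case treated in Proposition \ref{prop:BCsmooth}. Since $\mathcal{E}_1$ has all negative HN slopes at every geometric point of $S$, $H^0(X_S, \mathcal{E}_1) = 0$ fiberwise, and similarly $H^1(X_S, \mathcal{E}_0) = 0$ since $\mathcal{E}_0$ has all positive HN slopes. Consequently the Banach--Colmez space of the two-term complex fits into a short exact sequence of $v$-sheaves of abelian groups over $S$
\[
0 \to \BC(\mathcal{E}_0) \to \BC([\mathcal{E}_1 \to \mathcal{E}_0]) \to \BC(\mathcal{E}_1[1]) \to 0,
\]
realizing $\BC([\mathcal{E}_1 \to \mathcal{E}_0])$ as a $\BC(\mathcal{E}_0)$-torsor over $\BC(\mathcal{E}_1[1])$. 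Since cohomological smoothness is stable under composition and $v$-local on the target, it suffices to prove that $\BC(\mathcal{E}_0) \to S$ and $\BC(\mathcal{E}_1[1]) \to S$ are separately cohomologically smooth.

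For each of these I would argue $v$-locally on $S$. At geometric points, any vector bundle on the Fargues--Fontaine curve splits as a direct sum of $\mathcal{O}(\lambda)$'s; moreover the HN polygon is locally constant on the locus where slopes have a prescribed sign, so $v$-locally on $S$ one can produce an HN filtration
\[
0 = \mathcal{F}_0 \subset \mathcal{F}_1 \subset \cdots \subset \mathcal{F}_r = \mathcal{E}_0
\]
with isoclinic graded pieces, and after an unramified base extension on $E$, a further refinement with line-bundle graded pieces $\mathcal{O}(n_i)$ with $n_i > 0$. The fiber sequences
\[
0 \to \BC(\mathcal{F}_i) \to \BC(\mathcal{F}_{i+1}) \to \BC(\mathcal{O}(n_{i+1})) \to 0
\]
exhibit $\BC(\mathcal{F}_{i+1}) \to \BC(\mathcal{F}_i)$ as a torsor under the cohomologically smooth group $\BC(\mathcal{O}(n_{i+1}))$ (Proposition \ref{prop:BCsmooth}(i)), and an induction on $i$ yields cohomological smoothness of $\BC(\mathcal{E}_0) \to S$. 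The same inductive argument, using Proposition \ref{prop:BCsmooth}(ii) in place of (i), handles $\BC(\mathcal{E}_1[1]) \to S$.

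The main obstacle will be the $v$-local production of the HN refinements in families, and the verification that the extensions are genuine torsors for the cohomologically smooth subgroup (so that smoothness propagates through each extension step). This requires invoking the relative HN formalism and the $v$-local structure theory of vector bundles on $X_S$ from \cite{FarguesScholze}; once these are in hand, stability of cohomological smoothness under composition and under $v$-local torsors finishes the argument.
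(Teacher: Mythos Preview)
Your overall strategy---reducing via the short exact sequence to $\BC(\mathcal{E}_0)\to S$ and $\BC(\mathcal{E}_1[1])\to S$ separately, and then filtering each by line bundles to invoke Proposition~\ref{prop:BCsmooth}---is the right shape and matches the argument of \cite[Proposition II.3.5(iii)]{FarguesScholze} that the paper cites. However, there is a genuine gap in the second step. The claim that ``the HN polygon is locally constant on the locus where slopes have a prescribed sign'' is false: even with all slopes positive the polygon jumps in families (for instance $\mathcal{O}(2)^{\oplus 2}$ can specialize to $\mathcal{O}(3)\oplus\mathcal{O}(1)$), so one cannot produce an HN filtration with fixed graded pieces merely v-locally on $S$. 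You correctly flag this as ``the main obstacle,'' but the mechanism you propose for overcoming it does not exist.

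The fix, and precisely the point the paper isolates, is to replace v-locality by \emph{cohomologically-smooth-locality}: cohomological smoothness of $\BC(\mathcal{E}_0)\to S$ can be tested after base change along a cohomologically smooth surjection $\tilde{S}\to S$. This is \cite[Proposition~23.13]{FarguesScholze}, and the paper notes it holds in any $6$-functor formalism by \cite[Lemma~4.5.8(i)]{HeyerMann}. The argument in \cite{FarguesScholze} then manufactures such a cover $\tilde{S}\to S$ over which the pulled-back bundle does acquire a filtration with line-bundle subquotients---roughly by parametrizing auxiliary maps to or from standard bundles, which themselves sit inside open loci of Banach--Colmez spaces already known to be smooth---and inducts on rank. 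Your torsor-and-composition argument then runs correctly on $\tilde{S}$. So the missing ingredient is exactly the one the paper highlights: the step up from v-locality to cohomologically-smooth-locality.
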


\begin{proof} The proof of \cite[Proposition II.3.5 (iii)]{FarguesScholze} applies without change, once one has the analogue of \cite[Proposition 23.13]{ECoD} which says that being cohomologically smooth is cohomologically smooth-local. This is a general fact of $6$-functor formalisms, see \cite[Lemma 4.5.8 (i)]{HeyerMann}.
\end{proof}

Another situation is \cite[Proposition II.3.7]{FarguesScholze} concerning projectivized Banach--Colmez spaces. This follows by using the factorization over $\ast/\underline{E}^\times$ as before.

In \cite[Chapter III]{FarguesScholze}, the only relevant assertion is the cohomological smoothness of the connected components $\tilde{\mathcal G}_b^\circ$; but those are successive extensions of positive Banach--Colmez spaces.

More relevant is \cite[Chapter IV]{FarguesScholze}. First, the meaning of Artin v-stack changes now, as ``cohomological smoothness'' must be taken in the sense of the $\mathcal D_{\mathrm{mot}}^{\mathrm{oc}}$-formalism; we thus use the following notion.

\begin{definition} An Artin arc-stack is a v-stack $X$ such that the diagonal $\Delta_X: X\to X\times X$ is representable in partially proper locally spatial diamonds, and there is a separated and $\mathcal D_{\mathrm{mot}}^{\mathrm{oc}}$-cohomologically smooth surjection $Y\to X$ from a partially proper locally spatial diamond $Y$.
\end{definition}

Most importantly, as quasicompact open immersions are not cohomologically smooth, one has to a bit careful. In particular, in \cite[Example IV.1.7, Proposition IV.1.8, Example IV.1.9]{FarguesScholze} one must assume that the locally spatial diamonds (or maps) are also partially proper.

Next, we have \cite[Proposition IV.1.18, Theorem IV.1.19]{FarguesScholze} whose analogue is the following result.

\begin{proposition}\label{prop:BunGArtin} For any $\mu\in X_\ast(T)^+$, the Schubert cell $\mathrm{Gr}_{G,\mu}\to \mathrm{Spd}(E)$ is cohomologically smooth. The Beauville--Laszlo map
\[
\bigsqcup_\mu [\mathrm{Gr}_{G,\mu}/\underline{G(E)}]\to \mathrm{Bun}_G
\]
is a cohomologically smooth surjection. The arc-stack $\mathrm{Bun}_G$ is a cohomologically smooth Artin arc-stack with dualizing complex isomorphic to $\mathbb Z[\tfrac 1p]$. Each Harder--Narasimhan stratum $\mathrm{Bun}_G^b$ is a cohomologically smooth Artin arc-stack of dimension $-\langle 2\rho,\nu_b\rangle$.
\end{proposition}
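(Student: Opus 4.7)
The plan is to transport the proof of \cite[Proposition IV.1.18, Theorem IV.1.19]{FarguesScholze} to the motivic setting. Propositions \ref{prop:div1propersmooth}, \ref{prop:BCsmooth}, and \ref{prop:BCsmoothgeneral}, combined with cohomological smooth-locality of cohomological smoothness (\cite[Lemma 4.5.8]{HeyerMann}), replace the analogous $\ell$-adic inputs almost mechanically. The single systematic subtlety, flagged already above, is that quasicompact open immersions are no longer cohomologically smooth for $\mathcal{D}_{\mathrm{mot}}^{\mathrm{oc}}$, so at each step one must track partial properness.

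For the Schubert cell $\mathrm{Gr}_{G,\mu} \to \Spd(E)$, I would work after base change to a geometric point $\Spa(C)$ with chosen untilt $C^\sharp$. The closed Schubert variety $\mathrm{Gr}_{G,\leq\mu}$ admits an open cover by translates of the big cell, and each big cell is an iterated extension of positive Banach--Colmez spaces. Proposition \ref{prop:BCsmooth}(i) together with smooth-locality shows that the big cell is cohomologically smooth over $C^\sharp$; translating around the orbit and restricting to the partially proper locally closed sub-diamond $\mathrm{Gr}_{G,\mu} \subset \mathrm{Gr}_{G,\leq\mu}$ then yields cohomological smoothness of $\mathrm{Gr}_{G,\mu}$, of the expected relative dimension $\langle 2\rho,\mu\rangle$.

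Next, the Beauville--Laszlo map is surjective because any $G$-bundle on $X_S$ can be trivialized on a v-cover after a modification at one Cartier divisor of degree one, exactly as in \cite{FarguesScholze}; this is a geometric statement insensitive to the sheaf theory. Its cohomological smoothness follows from the previous paragraph combined with cohomological smoothness of $\ast/\underline{G(E)} \to \ast$, handled as in Proposition \ref{prop:div1propersmooth} via \cite[Section 5]{HeyerMann}. Descent then forces $\mathrm{Bun}_G$ to be a cohomologically smooth Artin arc-stack. The dualizing complex is computed locally on $[\mathrm{Gr}_{G,\mu}/\underline{G(E)}]$: the positive relative dimension $\langle 2\rho,\mu\rangle$ of $\mathrm{Gr}_{G,\mu}$ is cancelled by the opposite contribution from dividing by $\underline{G(E)}$, giving dualizing complex $\mathbb Z[\tfrac 1p]$ in degree zero, exactly as in \cite[Theorem IV.1.19]{FarguesScholze}.

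Finally, each Harder--Narasimhan stratum $\mathrm{Bun}_G^b$ is a twisted form of a classifying stack $\ast/\tilde{\mathcal G}_b$, where $\tilde{\mathcal G}_b$ is a group diamond fibered over $\underline{G_b(E)}$ whose connected component $\tilde{\mathcal G}_b^\circ$ is an iterated extension of positive Banach--Colmez spaces. The cohomological smoothness of $\mathrm{Bun}_G^b$ and the dimension count $-\langle 2\rho,\nu_b\rangle$ then follow from Propositions \ref{prop:BCsmooth} and \ref{prop:BCsmoothgeneral} together with cohomological smoothness of $\ast/\underline{G_b(E)} \to \ast$. I expect the main obstacle throughout to be not any individual step but the cumulative bookkeeping of partial properness at each descent, ensuring that no smoothness claim implicitly invokes cohomological smoothness of a quasicompact open immersion.
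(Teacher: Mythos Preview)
Your overall strategy matches the paper's, and your last two paragraphs (Beauville--Laszlo surjectivity and smoothness, Harder--Narasimhan strata via $\tilde{\mathcal G}_b$) are essentially what the paper does. But your argument for the cohomological smoothness of $\mathrm{Gr}_{G,\mu}$ contains a wrong step. You assert that the big cell in $\mathrm{Gr}_{G,\leq\mu}$ is an iterated extension of positive Banach--Colmez spaces; it is not. Over a geometric point $\Spa(C)$ with untilt $C^\sharp$, the $B_{\mathrm{dR}}^+$-Schubert cell $\mathrm{Gr}_{G,\mu}$ is the diamond of a smooth algebraic variety over $C^\sharp$ --- concretely, an affine fibration over the partial flag variety $G/P_\mu$ --- and already for $\mathrm{GL}_2$ with minuscule $\mu$ the big cell is $(\mathbb A^1_{C^\sharp})^\diamond$, which is not $\BC(\mathcal O(\lambda))$ for any $\lambda$. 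The paper instead invokes \cite[Proposition VI.2.4]{FarguesScholze}, whose proof reduces everything to (diamonds of) smooth algebraic varieties, and that argument carries over verbatim to the motivic formalism. Proposition~\ref{prop:BCsmooth} is the right input for $\tilde{\mathcal G}_b^\circ$ in your last paragraph, but not for Schubert cells.

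Separately, your dualizing-complex computation is confused: quotienting by the locally profinite group $\underline{G(E)}$ contributes dimension $0$, not $-\langle 2\rho,\mu\rangle$, so there is no cancellation at that step. What actually pins down the degree is that the Beauville--Laszlo map is locally a product with a Schubert cell, so its relative dimension matches the Schubert-cell dimension and the dualizing complex of $\mathrm{Bun}_G$ is \emph{locally} isomorphic to $\mathbb Z[\tfrac 1p]$. The paper then globalizes this: it trivializes the dualizing complex on the semistable locus via a choice of $\mathbb Z[\tfrac 1p]$-valued Haar measure on $G_b(E)$, and extends the trivialization to all of $\mathrm{Bun}_G$ by purity, using that every non-semistable stratum is smooth of strictly negative dimension. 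Your citation of \cite[Theorem IV.1.19]{FarguesScholze} does ultimately point at this, but the sentence preceding it misidentifies where the dimensions balance and omits the purity step.
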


We note that the dualizing complexes of the individual Harder--Narasimhan strata are nontrivial, and have been computed by Hamann--Imai \cite{HamannImai} (with torsion coefficients, but their arguments adapt to the motivic case).

\begin{proof} The proof of cohomological smoothness of $\mathrm{Gr}_{G,\mu}$ given in \cite[Proposition VI.2.4]{FarguesScholze} also works in the motivic setting, as it reduces everything to smooth algebraic varieties. The Beauville--Laszlo map is locally a product with Schubert cells and thus also cohomologically smooth; and this also lets one deduce that the dualizing complex of $\mathrm{Bun}_G$ must be locally isomorphic to $\mathbb Z[\tfrac 1p]$. On the semistable points, it can be trivialized by choosing a $\mathbb Z[\tfrac 1p]$-valued Haar measure on $G_b(E)$. This trivialization necessarily uniquely extends to all of $\mathrm{Bun}_G$ by purity, using that all other Harder--Narasimhan strata are smooth of negative dimension. This last statement follows from the presentation $\Bun_G^b\cong [\ast/\tilde{\mathcal G}_b]$ where the connected component of the identity is cohomologically smooth (of dimension $\langle 2\rho,\nu_b\rangle$) and the components $G_b(E)$ are a $p$-adic Lie group.
\end{proof}

In \cite[Section IV.2]{FarguesScholze}, there is a general discussion of universal local acyclicity. In the intervening years, this part has become streamlined and generalized to arbitrary $6$-functor formalisms, leading to what is now called $f$-suave sheaves, see \cite[Definition 4.4.1]{HeyerMann}. The original characterization \cite[Definition IV.2.1]{FarguesScholze} of universal local acyclicity is not available for us here (everything is overconvergent, and we do not have quasicompact open immersions), but we will not need it.

While \cite[Section IV.3]{FarguesScholze} is purely geometric, the next \cite[Section IV.4]{FarguesScholze} proves the important Jacobian criterion for cohomological smoothness. This fortunately holds true motivically (assuming that $Z$ is partially proper):

\begin{theorem}\label{thm:jacobiancriterion} Let $S$ be a perfectoid space and let $Z\to X_S$ be a smooth map of sous-perfectoid spaces such that $Z$ admits a Zariski closed immersion into a partially proper open subset of (the adic space) $\mathbb P^n_{X_S}$ for some $n\geq 0$. Then, with $\mathcal M_Z^{\mathrm{sm}}\subset \mathcal M_Z$ defined as in \cite{FarguesScholze}, the map
\[
\mathcal M_Z^{\mathrm{sm}}\to S
\]
is cohomologically smooth.
\end{theorem}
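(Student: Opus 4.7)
The plan is to transport the proof of \cite[Theorem IV.4.2]{FarguesScholze} essentially verbatim, relying on the motivic inputs already assembled in this section. First, since cohomological smoothness is local on the source for cohomologically smooth covers (\cite[Lemma 4.5.8(i)]{HeyerMann}), it suffices, after pullback to an arbitrary geometric point $\overline s\to S$ and fixing a section $s_0\in \mathcal M_Z^{\mathrm{sm}}(\overline s)$, to exhibit a cohomologically smooth chart around $(s_0,\overline s)$ for $\mathcal M_Z^{\mathrm{sm}}$.

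The central idea is deformation-theoretic: infinitesimal deformations of $s_0:X_{\overline s}\to Z$ are controlled by the vector bundle $s_0^\ast T_{Z/X_S}$ on $X_{\overline s}$, which, by membership of $s_0$ in the smooth locus $\mathcal M_Z^{\mathrm{sm}}$, has all Harder--Narasimhan slopes positive. Proposition~\ref{prop:BCsmoothgeneral} then yields that $\BC(s_0^\ast T_{Z/X_S})\to \Spa(\overline s)$ is cohomologically smooth in the $\mathcal D_{\mathrm{mot}}^{\mathrm{oc}}$-formalism. The next step is, using the Zariski closed immersion $Z\hookrightarrow U\subset \mathbb P^n_{X_S}$, to build a morphism from (a neighborhood of the origin in) this Banach--Colmez space to $\mathcal M_Z$ whose image is an open neighborhood of $s_0$ and which is itself a cohomologically smooth chart. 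In \cite{FarguesScholze} this is done by starting from a small deformation $v\in \BC(s_0^\ast T_{Z/X_S})$, producing an approximate section, and iteratively correcting it via a Newton-type scheme; partial properness of $U\to X_S$ keeps the iterates inside $U$, and Zariski closedness of $Z\subset U$ cuts out actual sections of $Z$ in the limit. These estimates are geometric statements about sous-perfectoid spaces and sections of $\mathbb P^n$, independent of the coefficient formalism, and so transport without change.

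Two features of the motivic formalism must be watched. First, quasicompact open immersions are not cohomologically smooth for $\mathcal D_{\mathrm{mot}}^{\mathrm{oc}}$; this is precisely the issue flagged throughout the preceding discussion and is why the hypothesis demands partial properness. All genuinely smooth inputs needed for the argument must therefore come either from Proposition~\ref{prop:BCsmoothgeneral} (BC spaces with positive slopes) or from cohomological smoothness of smooth morphisms of rigid-analytic spaces, both available in $\mathcal D_{\mathrm{mot}}^{\mathrm{oc}}$. Second, one must check that the open neighborhood of $s_0$ in $\mathcal M_Z$ identified with the chart is genuinely an open subspace arising from a partially proper inclusion, so that no forbidden open-immersion step is invoked as cohomologically smooth.

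The main obstacle, and really the only nontrivial verification, is auditing the original proof to confirm that every open subspace appearing in the construction of the chart is partially proper (equivalently, arises from Zariski-closed or partially-proper-open operations on $Z\hookrightarrow U\subset \mathbb P^n_{X_S}$), so that the inductively constructed morphism $\BC(s_0^\ast T_{Z/X_S})\to \mathcal M_Z^{\mathrm{sm}}$ is genuinely a cohomologically smooth chart in the motivic setting. Once this is done, cohomologically smooth descent along the resulting cover of $\mathcal M_Z^{\mathrm{sm}}$ yields the desired smoothness over $S$.
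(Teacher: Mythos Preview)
Your proposal correctly flags the partial-properness issue but misidentifies where the real work lies. You frame the task as building a cohomologically smooth chart from $\BC(s_0^\ast T_{Z/X_S})$ and ``auditing'' the opens for partial properness. But in the original proof the chart/retraction machinery (Lemma IV.4.28 and its use in Proposition IV.4.27) establishes only that the constant sheaf is \emph{suave}; the paper confirms this part adapts without change. Cohomological smoothness additionally requires that the dualizing complex $Rf^!\mathbb Z[\tfrac 1p]$ be invertible, and that is proved by a separate deformation-to-the-normal-cone argument: one works over $S'=S\times\{1,2,\ldots,\infty\}$ with $Z'$ degenerating to its linearization at $\infty$, produces a comparison map $\beta:\mathbb Z[\tfrac 1p](d)[2d]\to Rf'^!\mathbb Z[\tfrac 1p]$ that is an isomorphism over $\infty$, and then \emph{spreads} this isomorphism to the section $s$.

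It is this spreading step, after Lemma IV.4.30, where the overconvergent formalism forces a genuine modification, not just an audit. In the $\ell$-adic proof one uses quasicompact open neighborhoods and constructibility to propagate ``zero at $\infty$'' to ``zero near $\infty$''. Here quasicompact opens are unavailable, so the paper instead takes two nested \emph{partially proper} neighborhoods $\mathcal M_{Z''}\subset \mathcal M_{Z'}$, observes that the transition map $Rf''_!\mathbb D(Q)\to Rf'_!\mathbb D(Q)$ is compact, and uses that a compact map vanishing over $\{\infty\}$ vanishes over $\{n,n+1,\ldots,\infty\}$ for $n\gg 0$; Verdier-dually the transition $Rf'_\ast Q\to Rf''_\ast Q$ vanishes, and a colimit over the shrinking system (via \cite[Proposition 14.9]{ECoD}) gives $s^\ast Q=0$. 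Your proposal does not mention the cone $Q$, the compactness-of-transition-maps mechanism, or the colimit identification of $s^\ast$, and without these the argument does not close: replacing quasicompact opens by partially proper ones is not a formality but changes the shape of the spreading argument.
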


\begin{proof} Most of the proof carries over without change. In particular, the constant sheaf is suave by the same argument as for \cite[Proposition IV.4.27]{FarguesScholze}, using that the proof of \cite[Lemma IV.4.28]{FarguesScholze} yields maps that are cohomologically smooth in the $\mathcal D_{\mathrm{mot}}^{\mathrm{oc}}$-formalism. The only real change is at the end of the proof, after \cite[Lemma IV.4.30]{FarguesScholze}. We write out the argument (copying almost verbatim), using all the notation from there. We make the small change that whenever a localization to a small neighborhood was made in the proof, we assume that this neighborhood is partially proper (instead of quasicompact). This means that in some of the spreading out argument used below, we have to shrink this neighborhood further.

Let $f': \mathcal M_{Z'}\to S'$ be the projection, with fibres $f^{(n)}$ and $f^{(\infty)}$. Both $\mathbb Z[\tfrac 1p]$ and $Rf'^! \mathbb Z[\tfrac 1p]$ are $f'$-suave. In particular, the formation of $Rf'^! \mathbb Z[\tfrac 1p]$ commutes with base change, and we see that the restriction of $Rf'^!\mathbb Z[\tfrac 1p]$ to the fibre over $\infty$ is \'etale locally isomorphic to $\mathbb Z[\tfrac 1p](d)[2d]$, as an open subset of $\BC(s^\ast T_{Z/X_S})$. As $S$ is strictly totally disconnected, one can choose a global isomorphism with $\mathbb Z[\tfrac 1p](d)[2d]$.

The map from $\mathbb Z[\tfrac 1p](d)[2d]$ to the fibre of $Rf'^! \mathbb Z[\tfrac 1p]$ over $\infty$ extends to a small neighborhood, at least after shrinking $\mathcal M_{Z'}$ to a smaller neighborhood of the section $s$: Indeed, the extension would work if $\mathbb Z[\tfrac 1p](d)[2d]\in \mathcal D_{\mathrm{mot}}^{\mathrm{oc}}(\mathcal M_{Z'})$ was compact. It only fails to be compact because $\mathcal M_{Z'}$ fails to be quasicompact, but by shrinking $\mathcal M_{Z'}$ to a smaller neighborhood of $s$, we can correct for this. Thus, we can assume that there is a map
\[
\beta: \mathbb Z[\tfrac 1p](d)[2d]\to Rf'^! \mathbb Z[\tfrac 1p]
\]
that is an isomorphism in the fibre over $\infty$. We can assume that this map is $\gamma$-equivariant (passing to a smaller neighborhood). Let $Q$ be the cone of $\beta$. Then $Q$ is still $f'$-suave, as is its Verdier dual
\[
\mathbb D_{\mathcal M_{Z'}/S'}(Q) = R\sHom_{\mathcal M_{Z'}}(Q,Rf'^! \mathbb Z[\tfrac 1p]).
\]
Choosing two partially proper open neighborhoods $\mathcal M_{Z'''}\subset \mathcal M_{Z''}$ of the section $s$, strictly contained in another, the induced transition map
\[
Rf'''_!\mathbb D_{\mathcal M_{Z'''}/S'}(Q)\to Rf''_!\mathbb D_{\mathcal M_{Z''}/S'}(Q)
\]
is compact, and its restriction to $S\times \{\infty\}$ is zero; more precisely (squeezing another partially proper open neighborhood in between), we can factor the map over a compact object $K$ so that the map $K\to Rf''_!\mathbb D_{\mathcal M_{Z''}/S'}(Q)$ becomes zero after restriction to $S\times \{\infty\}$. This implies that its restriction to $S\times \{n,n+1,\ldots,\infty\}$ is zero for some $n\gg 0$ by compactness of $K$, and in particular so is
\[
Rf'''_!\mathbb D_{\mathcal M_{Z'''}/S'}(Q)\to Rf''_!\mathbb D_{\mathcal M_{Z''}/S'}(Q).
\]
Taking Verdier duals, this implies that also a similar transition map $Rf''_\ast Q\to Rf'''_\ast Q$ is zero.

In particular, for all $n\geq n_0$, the transition map on the fibre over $S\times\{n\}$ is zero. Using the $\gamma$-equivariance, this implies that the transition maps
\[
Rf^{(n)}_\ast (Q|_{\mathcal M_Z^{(n_0)}})|_{\mathcal M_{Z^{(n)}}}\to Rf^{(n+1)}_\ast (Q|_{\mathcal M_Z^{(n_0)}})|_{\mathcal M_{Z^{(n+1)}}}
\]
are zero, regarding $\mathcal M_{Z^{(n)}}\subset \mathcal M_{Z^{(n_0)}}$ as an open subset. Taking the colimit over all $n$ and using that the system $\mathcal M_{Z^{(n)}}\subset \mathcal M_{Z^{(n_0)}}$ has intersection $s(S)\subset \mathcal M_Z$ and is cofinal with a system of spatial diamonds of finite cohomological dimension (as can be checked in the case of projective space), finitaryness of motivic sheaves implies that
\[
s^\ast Q|_{\mathcal M_{Z^{(n_0)}}} = \varinjlim_n Rf^{(n)}_\ast (Q|_{\mathcal M_{Z^{(n_0)}}})|_{\mathcal M_{Z^{(n)}}} = 0
\]
and thus the map
\[
s^\ast \beta|_{\mathcal M_{Z'}}: \mathbb Z[\tfrac 1p](d)[2d]\to s^\ast Rf'^! \mathbb Z[\tfrac 1p]
\]
is an isomorphism, as desired.
\end{proof}

The results on partially compactly supported cohomology in \cite[Section IV.5]{FarguesScholze} mostly carry over without change. In the definitions, one has to change the use of quasicompact open subsets into partially proper open subsets, but there are also enough of those (by taking a countable union of strict inclusions of quasicompact open subsets). A cheap way to see that \cite[Lemma IV.5.1]{FarguesScholze} adapts to the motivic case is to observe that this partially compactly supported cohomology of a punctured open unit disc is mixed Tate and commutes with base change (by obvious excision triangles), so defines an object of $\mathcal D_{MT}(k)$; but on this category, the combined $\ell$-adic realization functors are conservative. Also the results on hyperbolic localization in \cite[Section IV.6]{FarguesScholze} are unchanged --- all the key arguments are of point-set topology nature.\footnote{Alternatively, recent work in progress by Heyer--Mann yields hyperbolic localization results for arbitrary six-functor formalisms, under minimal assumptions.} By contrast, the results on Drinfeld's lemma in \cite[Section IV.7]{FarguesScholze} will be different, as $\mathcal D_{\mathrm{mot}}^{\mathrm{oc}}(\Div^1)$ is not simply equivalent to $W_E$-representations. Still, we have the following result on dualizable mixed Tate motives, as defined in \cite[Definition 10.7]{BerkovichMotives}, where we use the algebraic stack $\mathrm{MG}_{\mathrm{Div}^1}$ introduced in Section~\ref{sec:div1} below. Here and in the following, for any finite set $I$, we define
\[
\mathrm{Div}^I := (\mathrm{Div}^1)^I.
\]

\begin{theorem}\label{thm:drinfeldlemma} For any finite set $I$ and any $\mathbb Z[\tfrac 1p]$-algebra $\Lambda$, the natural functor 
\[
(\mathcal D_{MT}(\mathrm{Div}^1,\Lambda)^{\mathrm{dual}})^{\otimes I/\mathcal D_{MT}(k,\Lambda)^\omega}\to \mathcal D_{MT}(\mathrm{Div}^I,\Lambda)^{\mathrm{dual}}
\]
is an equivalence of stable $\infty$-categories, and this is equivalent to the stable $\infty$-category of perfect complexes on
\[
\mathrm{MG}_{\mathrm{Div}^I,\Lambda} := \mathrm{MG}_{\mathrm{Div}^1}^{I_{/\mathrm{MG}_k}}\times \mathrm{Spec}(\Lambda).
\]
\end{theorem}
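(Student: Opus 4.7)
The plan is to identify both sides of the asserted equivalence with $\mathrm{Perf}(X_{\Div^1}^{I_{/X}}\times\mathrm{Spec}(\Lambda))$ independently. First I would establish the single-factor identification $\mathcal D_{MT}(\Div^1,\Lambda)\simeq \mathrm{QCoh}(X_{\Div^1}\times\mathrm{Spec}(\Lambda))$, matching dualizable objects with perfect complexes, and similarly $\mathcal D_{MT}(\overline{\mathbb F}_q,\Lambda)\simeq \mathrm{QCoh}(X\times\mathrm{Spec}(\Lambda))$. Writing $\Div^1=\Spd(E)/\phi^{\mathbb Z}$, the computation reduces via $\phi$-equivariance to mixed Tate motives on $\Spd(E)$, then on a geometric point $\Spd(C)$, which are controlled by \cite{BerkovichMotives}; the gerbe $X_{\Div^1}$ of Section~\ref{sec:div1} is by construction the universal recipient of the resulting descent datum.

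Given this, I would identify the source of the natural functor with $\mathrm{Perf}(X_{\Div^1}^{I_{/X}}\times\mathrm{Spec}(\Lambda))$ using the general formula that, for a morphism $Y\to B$ of perfect algebraic stacks, the $I$-fold tensor product of $\mathrm{Perf}(Y)$ over $\mathrm{Perf}(B)$ recovers $\mathrm{Perf}(Y^{I_{/B}})$. So the theorem reduces to showing that the target $\mathcal D_{MT}((\Div^1)^I,\Lambda)^{\mathrm{dual}}$ is equivalent to the same $\mathrm{Perf}$-category, with the comparison intertwining the external tensor product.

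To construct the comparison functor $\mathcal D_{MT}((\Div^1)^I,\Lambda)^{\mathrm{dual}}\to \mathrm{Perf}(X_{\Div^1}^{I_{/X}}\times\mathrm{Spec}(\Lambda))$, I would pull back along the $I$ projections $(\Div^1)^I\to \Div^1$, assemble the resulting copies of the single-factor equivalence into an $\mathrm{Perf}(X\times\mathrm{Spec}(\Lambda))$-linear tensor functor, and check that composing with $\boxtimes$ on the source side recovers the identity on external products. Full faithfulness of this comparison on dualizable objects should then follow from a Künneth formula for dualizable mixed Tate sheaves, which is essentially formal in a six-functor formalism (dualizable objects behave well under external product, and $\mathrm{Hom}$ of external products factors through the external product of $\mathrm{Hom}$s).

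The hard step is essential surjectivity: showing that every dualizable mixed Tate sheaf on $(\Div^1)^I$ is captured by external products. In the $\ell$-adic setting this is the content of Drinfeld's lemma in \cite[Section IV.7]{FarguesScholze}, where partial Frobenii are used to decompose sheaves into $I$-tuples of $W_E$-representations. Motivically, the partial Frobenius structure is already encoded in the stacky nature of $X_{\Div^1}^{I_{/X}}$, so the argument should amount to the following: define mixed Tate sheaves on $(\Div^1)^I$ as the subcategory generated under colimits and retracts by external products of mixed Tate sheaves on each factor, and verify via the two preceding steps that this matches $\mathrm{QCoh}$ of the fiber product. The delicate verification is that this ``tensor-generated'' mixed Tate subcategory really agrees with the intrinsic one one would want, which is the place where a motivic replacement for the partial Frobenius argument of the $\ell$-adic Drinfeld lemma is needed.
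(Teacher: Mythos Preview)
Your plan correctly isolates essential surjectivity as the crux, but it does not actually supply it. Defining the mixed Tate subcategory on $(\Div^1)^I$ as ``generated by external products'' and then noting that one must still match this with the intrinsic subcategory is circular: that matching \emph{is} the theorem. You end by saying a motivic substitute for the partial-Frobenius argument is needed, without producing one. There is also a secondary gap: the identity $\mathrm{Perf}(Y)^{\otimes I/\mathrm{Perf}(B)}\simeq\mathrm{Perf}(Y^{I_{/B}})$ is not a general fact about algebraic stacks; it needs a hypothesis of Ben-Zvi--Francis--Nadler type (perfect stacks), and while $X_{\Div^1}\to X$ is a gerbe banded by a reasonable group scheme, you would still owe that verification.

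The paper avoids a direct motivic Drinfeld argument entirely. Full faithfulness is imported from \cite[Corollary~10.6]{BerkovichMotives}. For essential surjectivity the paper exploits the arithmetic structure of mixed Tate motives rather than partial Frobenii: with torsion coefficients the statement is already the \'etale one, namely \cite[Proposition~IV.7.3]{FarguesScholze}; with rational coefficients one filters by weight, and the graded pieces in weight~$0$ are \'etale $\Lambda$-local systems, for which the classical Drinfeld argument (reduction to completions of finitely generated $\mathbb Z$-algebras, exactly as in the proof of \cite[Proposition~IV.7.3]{FarguesScholze}) applies verbatim. The point you are missing is that the weight filtration on $\mathcal D_{MT}$ lets one trade the ``hard'' motivic statement for two already-known \'etale statements; no new partial-Frobenius mechanism is required.
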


Here, we take tensor products of small idempotent complete stable $\infty$-categories; under the equivalence with compactly generated presentable stable $\infty$-categories, this corresponds to the Lurie tensor product there.

\begin{proof} We already know fully faithfulness by \cite[Corollary 10.6]{BerkovichMotives} (using the normed version of the stacks to make them quasicompact and quasiseparated; the resulting pullback functors on sheaf categories are fully faithful (as they are a pullback under a $\mathbb R_{>0}$-torsor) and an equivalence for $k$). With torsion coefficients, the equivalence is \cite[Proposition IV.7.3]{FarguesScholze}, so it suffices to prove the statement rationally. But then we can filter by weights, and one can reduce to the case of weight $0$, in which case we have to classify \'etale $\Lambda$-local systems. But these are equivalent to $W_E^I$-representations, by the proof of \cite[Proposition IV.7.3]{FarguesScholze} (reducing now $\Lambda$ to completions at closed points of finitely generated $\mathbb Z$-algebras, instead of $\mathbb F_\ell$-algebras -- after this reduction). For the first step of the proof of \cite[Proposition IV.7.3]{FarguesScholze}, we note that for any small v-stack $X$ and any $\mathbb Z[\tfrac 1p]$-algebra $\Lambda$, the functor
\[
\mathcal D_{\mathrm{et}}^{\mathrm{oc}}(X,\Lambda)\to \mathcal D_{\mathrm{et}}^{\mathrm{oc}}(X\times \mathrm{Spa}(C),\Lambda)
\]
is fully faithful. If $n\Lambda=0$ for some $n$ prime to $p$, this is \cite[Theorem 19.5]{ECoD}. But the argument given there works in general for overconvergent sheaves; the key computation is that the \'etale $\Lambda$-cohomology of the universal cover of an annulus is trivial. With rational coefficients, even the annulus itself has trivial cohomology.
\end{proof}

\section{$\mathcal D_{\mathrm{mot}}(\Bun_G)$}

Finally, we are in a position to analyze $\mathcal D_{\mathrm{mot}}(\Bun_G) = \mathcal D_{\mathrm{mot}}^{\mathrm{oc}}(\Bun_G)$. As there is no possibility for non-overconvergent sheaves on $\Bun_G$, we leave out the superscript $^{\mathrm{oc}}$ here.

We have the following analogue of \cite[Theorem V.0.1]{FarguesScholze}.

\begin{theorem}\leavevmode
\begin{enumerate}
\item[{\rm (o)}] For any $b\in B(G)$, the chart
\[
\pi_b: \mathcal M_b\to \Bun_G
\]
is cohomologically smooth, and $\mathcal M_b$ is cohomologically smooth over $\ast/\underline{G_b(E)}$.
\item[{\rm (i)}] Via excision triangles, there is an infinite semiorthogonal decomposition of $\mathcal D_{\mathrm{mot}}(\Bun_G,\Lambda)$ into the various $\mathcal D_{\mathrm{mot}}(\Bun_G^b)$ for $b\in B(G)$.
\item[{\rm (ii)}] For each $b\in B(G)$, pullback along
\[
\Bun_G^b\cong [\ast/\tilde{G}_b]\to [\ast/\underline{G_b(E)}]
\]
gives an equivalence
\[
\mathcal D_{\mathrm{mot}}(\ast/\underline{G_b(E)})\cong \mathcal D_{\mathrm{mot}}(\Bun_G^b),
\]
and
\[
\mathcal D_{\mathrm{mot}}(\ast/\underline{G_b(E)})\cong \mathcal D_{\mathrm{mot}}(\ast)\otimes_{\mathcal D(\mathbb Z[\tfrac 1p])} \mathcal D(G_b(E),\mathbb Z[\tfrac 1p])
\]
is equivalent to the stable $\infty$-category of smooth representations of $G_b(E)$ with coefficients in
\[
\mathcal D_{\mathrm{mot}}(\ast) = \mathcal D_{\mathrm{mot}}(k).
\]
\item[{\rm (iii)}] The category $\mathcal D_{\mathrm{mot}}(\Bun_G)$ is compactly generated, and a complex $A\in \mathcal D_{\mathrm{mot}}(\Bun_G)$ is compact if and only if for all $b\in B(G)$, the restriction
\[
i^{b\ast} A\in \mathcal D_{\mathrm{mot}}(\Bun_G^b)\cong \mathcal D_{\mathrm{mot}}(\ast)\otimes_{\mathcal D(\mathbb Z[\tfrac 1p])} \mathcal D(G_b(E),\mathbb Z[\tfrac 1p])
\]
is compact, and zero for almost all $b$. Here, compactness in $\mathcal D_{\mathrm{mot}}(\ast)\otimes_{\mathcal D(\mathbb Z[\tfrac 1p])} \mathcal D(G_b(E),\mathbb Z[\tfrac 1p])$ is equivalent to lying in the thick triangulated subcategory generated by $c\text-\mathrm{Ind}_K^{G_b(E)} M$ as $K$ runs over open pro-$p$-subgroups of $G_b(E)$ and $M\in \mathcal D_{\mathrm{mot}}(\ast)$ is compact.
\item[{\rm (iv)}] On the subcategory $\mathcal D_{\mathrm{mot}}(\Bun_G)^\omega\subset \mathcal D_{\mathrm{mot}}(\Bun_G)$ of compact objects, there is a Bernstein--Zelevinsky duality functor
\[
\mathbb D_{BZ}: (\mathcal D_{\mathrm{mot}}(\Bun_G)^\omega)^{\mathrm{op}}\to \mathcal D_{\mathrm{mot}}(\Bun_G)^\omega
\]
with a functorial identification
\[
R\Hom(A,B)\cong \pi_!(\mathbb D_{BZ}(A)\dotimes_\Lambda B)
\]
for $B\in \mathcal D_{\mathrm{mot}}(\Bun_G)$, where $\pi: \Bun_G\to \ast$ is the projection. The functor $\mathbb D_{BZ}$ is an equivalence, and $\mathbb D_{BZ}^2$ is naturally equivalent to the identity. It is compatible with usual Bernstein--Zelevinsky duality on $\mathcal D(G_b(E),\mathbb Z[\tfrac 1p])$ for basic $b\in B(G)$.
\item[{\rm (v)}] An object $A\in \mathcal D_{\mathrm{mot}}(\Bun_G)$ is suave (with respect to $\Bun_G\to \ast$) if and only if for all $b\in B(G)$, the restriction
\[
i^{b\ast} A\in \mathcal D_{\mathrm{mot}}(\Bun_G^b)\cong \mathcal D_{\mathrm{mot}}(\ast)\otimes_{\mathcal D(\mathbb Z[\tfrac 1p])} \mathcal D(G_b(E),\mathbb Z[\tfrac 1p])
\]
is admissible, i.e.~for all pro-$p$ open subgroups $K\subset G_b(E)$, the invariants $(i^{b\ast} A)^K\in \mathcal D_{\mathrm{mot}}(\ast)$ are compact. Suave objects are preserved by Verdier duality, and satisfy Verdier biduality.
\end{enumerate}
\end{theorem}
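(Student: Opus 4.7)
The strategy is to follow the proof of \cite[Theorem V.0.1]{FarguesScholze} step by step, replacing each $\ell$-adic ingredient with its motivic counterpart established earlier in this paper. Part (o) is the starting point: to prove that $\pi_b: \mathcal M_b\to \Bun_G$ is cohomologically smooth, I would invoke the motivic Jacobian criterion (Theorem~\ref{thm:jacobiancriterion}) applied to the relevant space of modifications, having first verified that the moduli problem realizes as $\mathcal M_Z^{\mathrm{sm}}$ for some $Z$ satisfying the partial-properness hypothesis there. Cohomological smoothness of $\mathcal M_b\to \ast/\underline{G_b(E)}$ then follows because the fibers are successive extensions of positive Banach--Colmez spaces, which are smooth by Propositions~\ref{prop:BCsmooth} and~\ref{prop:BCsmoothgeneral}. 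Part (i) is then formal: the Newton stratification is a locally closed stratification of $\Bun_G$ by $\Bun_G^b$, and any $6$-functor formalism produces the corresponding infinite semi-orthogonal decomposition via excision triangles.

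For (ii), I would use the presentation $\Bun_G^b\cong [\ast/\tilde{\mathcal G}_b]$ from Proposition~\ref{prop:BunGArtin}. The connected component $\tilde{\mathcal G}_b^\circ$ is a successive extension of positive Banach--Colmez spaces, hence cohomologically smooth over $\ast$; combining this with descent gives $\mathcal D_{\mathrm{mot}}([\ast/\tilde{\mathcal G}_b])\cong \mathcal D_{\mathrm{mot}}([\ast/\underline{G_b(E)}])$ after quotienting by $\tilde{\mathcal G}_b^\circ$. The tensor product decomposition
\[
\mathcal D_{\mathrm{mot}}([\ast/\underline{G_b(E)}])\cong \mathcal D_{\mathrm{mot}}(\ast)\otimes_{\mathcal D(\mathbb Z[\tfrac 1p])} \mathcal D(G_b(E),\mathbb Z[\tfrac 1p])
\]
then follows from the analysis of classifying stacks of $p$-adic Lie groups carried out by Heyer--Mann \cite{HeyerMann}, together with the $\mathbb Z[\tfrac 1p]$-linearity of $\mathcal D_{\mathrm{mot}}$ (already used in the proof of Proposition~\ref{prop:div1propersmooth}), realizing $\mathcal D_{\mathrm{mot}}(\ast/\underline{G_b(E)})$ as smooth $G_b(E)$-representations with coefficients in $\mathcal D_{\mathrm{mot}}(\ast)$. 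Parts (iii) and (v) are then direct consequences of (i) and (ii): compact generation and the compactness criterion via $c\text-\mathrm{Ind}_K^{G_b(E)}M$ follow from the analogous statement for smooth representations combined with the semi-orthogonal decomposition, and suaveness reduces stratumwise to the admissibility condition since $\Bun_G$ is itself cohomologically smooth.

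For (iv), the crucial input is that $\Bun_G$ is cohomologically smooth with dualizing complex $\mathbb Z[\tfrac 1p]$ (Proposition~\ref{prop:BunGArtin}), which forces Verdier duality to take the simple shape required. I would define $\mathbb D_{BZ}(A) := \mathbb D_{\Bun_G}(A)$, check it preserves compact objects via the characterization in (iii) and its stratumwise behavior, and verify the functorial identification by the standard manipulation $R\Hom(A,B)=\pi_! \sHom(A,B)=\pi_!(\mathbb D_{BZ}(A)\dotimes_\Lambda B)$ valid because $A$ is suave. Involutivity $\mathbb D_{BZ}^2\simeq\mathrm{id}$ and compatibility with classical Bernstein--Zelevinsky duality on basic strata then reduce to the corresponding facts for $\mathcal D(G_b(E))$. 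I expect the main obstacle to lie in part (ii): concretely, verifying that the Heyer--Mann formalism of classifying stacks of $p$-adic Lie groups transports cleanly to the overconvergent motivic setting, and that the change-of-coefficients tensor product is compatible with pullback from $[\ast/\underline{G_b(E)}]$ to $\Bun_G^b$ in the presence of the infinite-dimensional smooth stack $\tilde{\mathcal G}_b^\circ$; everything else essentially assembles formally from the motivic six-functor machinery established in the preceding sections.
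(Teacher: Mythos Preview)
Your overall strategy matches the paper's exactly: the paper's proof is the single sentence ``All the proofs immediately adapt,'' and what you have written is a reasonable unpacking of which motivic ingredients (the Jacobian criterion, smoothness of Banach--Colmez spaces, Heyer--Mann on classifying stacks of $p$-adic Lie groups, the Artin arc-stack structure of $\Bun_G$) replace which $\ell$-adic ones. Parts (o)--(iii) and (v) are handled correctly.

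There is, however, a genuine gap in your treatment of (iv). The manipulation
\[
R\Hom(A,B)=\pi_! \sHom(A,B)=\pi_!(\mathbb D_{BZ}(A)\dotimes_\Lambda B)
\]
fails at both steps. First, $R\Hom(A,B)=\pi_\ast\sHom(A,B)$, not $\pi_!$, and $\pi:\Bun_G\to\ast$ is not proper, so there is no reason for these to agree. Second, your justification ``valid because $A$ is suave'' is inapplicable: compact objects of $\mathcal D_{\mathrm{mot}}(\Bun_G)$ need not be suave. Indeed (iii) and (v) give \emph{different} stratumwise characterizations (compactly induced versus admissible), and a typical compact generator $i^b_! \, c\text{-}\mathrm{Ind}_K^{G_b(E)} M$ is not admissible. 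So you cannot invoke the suave identity $\sHom(A,-)\cong\mathbb D_{\mathrm{Verdier}}(A)\otimes(-)$.

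The paper addresses this immediately after the theorem: compact objects are exactly the \emph{prim} objects in the sense of Heyer--Mann, and Bernstein--Zelevinsky duality is prim duality. The defining property of prim is precisely the formula $R\Hom(A,-)\cong\pi_!(\mathbb D_{\mathrm{prim}}(A)\otimes -)$, so once one shows compact $\Rightarrow$ prim (which is what the stratumwise check in \cite{FarguesScholze} actually does), the formula in (iv) is tautological and $\mathbb D_{BZ}=\mathbb D_{\mathrm{prim}}$. Your intuition that cohomological smoothness of $\Bun_G$ with trivial dualizing complex is relevant is correct---it is what ultimately identifies prim duality with a twist of Verdier duality---but it does not let you bypass the prim condition via the suave one.
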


Between the writing of \cite{FarguesScholze} and now, Mann found the dual notion ``prim'' of suave (cf.~\cite[Definition 4.4.1]{HeyerMann}), and Hansen--Mann observed that the compact objects considered in (iv) are also the prim objects, and Bernstein--Zelevinsky duality is prim duality.

\begin{proof} All the proofs immediately adapt. Part (o) follows as before from the Jacobian criterion. Part (i) follows from excision, which holds for motivic sheaves. For part (ii), one uses that pullback to gerbes banded by (extensions of) positive Banach--Colmez spaces is an equivalence (using that positive Banach--Colmez spaces are cohomologically smooth and contractible, cf.~\cite[Proposition V.2.1]{FarguesScholze}). To prove the equivalence
\[
\mathcal D_{\mathrm{mot}}(\ast)\otimes_{\mathcal D(\mathbb Z[\tfrac 1p])} \mathcal D(G_b(E),\mathbb Z[\tfrac 1p])\cong \mathcal D_{\mathrm{mot}}(\ast/\underline{G_b(E)}),
\]
one can follow the proof of \cite[Theorem V.1.1]{FarguesScholze}. Alternatively, one can use the map of six-functor formalisms from condensed anima to arc-stacks used in the proof of Proposition~\ref{prop:div1propersmooth}, and \cite[Proposition 5.1.12]{HeyerMann}, to construct a functor
\[
\mathcal D_{\mathrm{mot}}(\ast)\otimes_{\mathcal D(\mathbb Z[\tfrac 1p])} \mathcal D(G_b(E),\mathbb Z[\tfrac 1p])\to \mathcal D_{\mathrm{mot}}(\ast/\underline{G_b(E)}).
\]
To show that this is an equivalence, one first reduces to the similar assertion for $\ast/\underline{K}$ for some open pro-$p$-subgroup $K\subset G_b(E)$, via the \'etale cover $\ast/\underline{K}\to \ast/\underline{G_b(E)}$ (and the resulting monadic adjunction which is preserved by Lurie tensor products). But then $\ast\to \ast/\underline{K}$ is proper, and the resulting comonadic adjunction is preserved by Lurie tensor products.

In (iii), the description of the compact objects in these representation categories with coefficients in $\mathcal D_{\mathrm{mot}}(\ast)$ follows from the general description of compact objects in Lurie tensor products of compactly generated categories, as generated by tensor products of compact objects in the factors. The charts from (o) yield left adjoint functors to the pullback functors $i_b^\ast$, by the analogue of \cite[Proposition V.4.2]{FarguesScholze}; these left adjoints then preserve compact objects, and yield a generating class of compact objects. To see that their restrictions to all strata are compact, it suffices to see that $i_{b!}$ also preserves compact objects. This follows from the finite cohomological dimension of the spatial diamond $\widetilde{\mathcal M}^\circ_b/\underline{K}$ as in the proof of \cite[Theorem V.4.1]{FarguesScholze}.

In part (iv), we check that the prim objects for $\mathrm{Bun}_G\to \ast$ are precisely the compact objects; then prim duality has the desired properties, cf.~\cite[Proposition 6.8]{ScholzeSixFunctors}. First, by \cite[Proposition 6.16]{ScholzeSixFunctors}, any prim object must be compact. For the converse, it suffices to see that the generating objects from (iii) are all prim. More precisely, the objects $A_K^b$ arising as left adjoints of $i_b^\ast$ evaluated on $c\text-\mathrm{Ind}_K^{G_b(E)} \Lambda$, for open pro-$p$-subgroups $K\subset G_b(E)$, have as prim dual $i_{b!} c\text-\mathrm{Ind}_K^{G_b(E)} \Lambda$ up to shift and twist. It is easy to construct the required unit and counit of the adjunction, using the left adjoint property, and check that the composites are the identity.

Finally, for part (v), one argues as in \cite[Section V.7]{FarguesScholze}, using the next proposition applied with $S$ also given by $\mathrm{Bun}_G$.
\end{proof}

Let us also note the following result whose analogue for $\mathcal D_{\mathrm{et}}$ was not explicitly stated in \cite{FarguesScholze}.

\begin{proposition}\label{prop:DmotBunGbasechange} For any small arc-stack $S$, the exterior tensor product gives an equivalence
\[
\mathcal D_{\mathrm{mot}}^{\mathrm{oc}}(S)\otimes_{\mathcal D_{\mathrm{mot}}(\ast)} \mathcal D_{\mathrm{mot}}(\Bun_G)\cong \mathcal D_{\mathrm{mot}}^{\mathrm{oc}}(\Bun_G\times S).
\]
\end{proposition}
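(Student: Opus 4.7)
The plan is to combine arc-descent in $S$ with the Harder--Narasimhan stratification of $\Bun_G$, reducing the claim to classifying stacks of compact pro-$p$ groups where it becomes smooth descent. First, part (iii) of the preceding theorem presents $\mathcal D_{\mathrm{mot}}(\Bun_G)$ as compactly generated with compact objects stable under the $\mathcal D_{\mathrm{mot}}(\ast)^{\omega}$-action, so it is dualizable as a $\mathcal D_{\mathrm{mot}}(\ast)$-linear presentable $\infty$-category. Consequently $\mathcal C \mapsto \mathcal C \otimes_{\mathcal D_{\mathrm{mot}}(\ast)} \mathcal D_{\mathrm{mot}}(\Bun_G)$ preserves limits of presentable categories, so the left-hand side of the claimed equivalence defines an arc-sheaf in $S$, matching the right-hand side (which is an arc-sheaf by construction). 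Moreover both sides transform the semi-orthogonal decomposition of (i) into semi-orthogonal decompositions: on the right via the induced HN stratification of $\Bun_G \times S$, and on the left because tensoring with a presentable $\mathcal D_{\mathrm{mot}}(\ast)$-linear category preserves fiber/cofiber sequences of such. We therefore reduce to the analogous statement for each stratum $\Bun_G^b$, and then, using $\Bun_G^b \cong [\ast/\tilde{\mathcal G}_b]$ together with cohomological smoothness of $\tilde{\mathcal G}_b^\circ$, to
\[
\mathcal D_{\mathrm{mot}}^{\mathrm{oc}}(S) \otimes_{\mathcal D_{\mathrm{mot}}(\ast)} \mathcal D_{\mathrm{mot}}([\ast/\underline{G_b(E)}]) \;\simeq\; \mathcal D_{\mathrm{mot}}^{\mathrm{oc}}([\ast/\underline{G_b(E)}] \times S).
\]

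Next, writing $G_b(E) = \varinjlim_K K$ as a filtered union of compact open pro-$p$ subgroups, both sides become filtered colimits of the corresponding expressions for $K$, where on the left we use the description from (ii) of $\mathcal D_{\mathrm{mot}}([\ast/\underline{G_b(E)}])$ as smooth $G_b(E)$-representations in $\mathcal D_{\mathrm{mot}}(\ast)$. For each compact pro-$p$ $K$, the map $S \to [\ast/\underline K] \times S$ is a $\underline K$-torsor, hence cohomologically smooth, so smooth descent identifies the right-hand side with the totalization $\lim_{[n] \in \Delta} \mathcal D_{\mathrm{mot}}^{\mathrm{oc}}(\underline K^n \times S)$ of the Čech nerve. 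The parallel descent over the point gives $\mathcal D_{\mathrm{mot}}([\ast/\underline K]) = \lim_{[n]} \mathcal D_{\mathrm{mot}}(\underline K^n)$, and tensoring this limit with $\mathcal D_{\mathrm{mot}}^{\mathrm{oc}}(S)$ commutes with the totalization by dualizability of $\mathcal D_{\mathrm{mot}}([\ast/\underline K])$; level-wise the Künneth identification $\mathcal D_{\mathrm{mot}}(\underline K^n) \otimes_{\mathcal D_{\mathrm{mot}}(\ast)} \mathcal D_{\mathrm{mot}}^{\mathrm{oc}}(S) \simeq \mathcal D_{\mathrm{mot}}^{\mathrm{oc}}(\underline K^n \times S)$ is standard for the profinite set $\underline K^n$ (reducing, by a cofiltered limit of finite sets, to the trivial case of a single point).

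The main obstacle is the verification that $\mathcal D_{\mathrm{mot}}(\Bun_G)$ (and each $\mathcal D_{\mathrm{mot}}([\ast/\underline K])$ appearing at the end) is dualizable over $\mathcal D_{\mathrm{mot}}(\ast)$, which is what lets the left-hand side be treated as an arc-sheaf in $S$ and lets tensor products commute with the cohomologically smooth descent totalization. Once this dualizability has been secured via (iii) and the explicit description on strata, everything else is a routine bootstrap along the chart-and-stratification architecture already in place in \cite{FarguesScholze}.
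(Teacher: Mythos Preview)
Your reduction to strata via the semi-orthogonal decomposition is the same route the paper takes; the paper then handles the gluing by observing that the left adjoints to $i^{b\ast}$ are computed via the charts $\mathcal M_b$ from part (o), and that this description holds after any base change in $S$. Your arc-descent and dualizability preliminaries are correct but not needed for the argument.

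The genuine gap is in your treatment of the individual strata: the claim that $G_b(E)=\varinjlim_K K$ is a filtered union of its compact open pro-$p$ subgroups is false whenever $G_b(E)$ is non-compact (for instance $b=1$ with $G$ isotropic). Compact open subgroups of a $p$-adic Lie group do not form an upward-filtered poset --- two of them need not be contained in a common compact open subgroup --- and any element generating an unbounded cyclic subgroup (e.g.\ a regular semisimple element whose eigenvalues are not units) lies in no compact subgroup whatsoever. Hence neither $\mathcal D_{\mathrm{mot}}([\ast/\underline{G_b(E)}])$ nor its base change to $S$ is the filtered colimit you assert. The stratum case is instead immediate without any such reduction: the argument proving part (ii) works over an arbitrary base, giving
\[
\mathcal D_{\mathrm{mot}}^{\mathrm{oc}}([\ast/\underline{G_b(E)}]\times S)\;\cong\;\mathcal D_{\mathrm{mot}}^{\mathrm{oc}}(S)\otimes_{\mathcal D(\mathbb Z[\tfrac 1p])}\mathcal D(G_b(E),\mathbb Z[\tfrac 1p]),
\]
which agrees with $\mathcal D_{\mathrm{mot}}^{\mathrm{oc}}(S)\otimes_{\mathcal D_{\mathrm{mot}}(\ast)}\mathcal D_{\mathrm{mot}}([\ast/\underline{G_b(E)}])$ by associativity of the tensor product. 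If you want to pass through a compact open $K$ at all, the correct move is to descend along the single cover $[\ast/\underline K]\to[\ast/\underline{G_b(E)}]$ (whose fibre is the discrete set $G_b(E)/K$), not to take a colimit over varying $K$.
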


\begin{proof} This is easy to see stratum by stratum. To see that the gluing functors are compatible, use the description of the left adjoints to $i^{b\ast}$ in terms of the charts $\mathcal M_b$; this description also holds after any base change. To see the latter, we need to see the analogue of \cite[Proposition V.4.2]{FarguesScholze}: for any $S$ and any $A\in \mathcal D_{\mathrm{mot}}^{\mathrm{oc}}(S\times \tilde{\mathcal M}_b)$ with pullback $A_0 = i^\ast A\in \mathcal D_{\mathrm{mot}}^{\mathrm{oc}}(S)$ along $i: S\hookrightarrow S\times \tilde{\mathcal M}_b$, the map
\[
R\Gamma(S\times \tilde{\mathcal M}_b,A)\to R\Gamma(S,A_0)
\]
is an isomorphism. By descent, we can assume that $S$ is strictly totally disconnected. Then it amounts to the vanishing of the partially compactly supported cohomology of the partially proper locally spatial diamond $S\times \tilde{\mathcal M}_b^\circ$.
\end{proof}

\section{$\mathcal D_{\mathrm{mot}}(\mathrm{Div}^1)$}\label{sec:div1}

 Recall the diamond $\mathrm{Div}^1 = \mathrm{Spd}(\breve{E})/\phi$. The goal of this section is to give an explicit description of $\mathcal D_{\mathrm{mot}}(\mathrm{Div}^1)$ in terms of $\mathcal D_{\mathrm{mot}}(k)$, where $k=\overline{\mathbb F}_q$.

Recall the algebraic stack $\mathrm{MG}_k$ over $\mathbb Z[\tfrac 1p]$ parametrizing, over an (animated) ring $R$, an invertible $R$-module $L$ together with compatible $R/^{\mathbb L} n$-linear isomorphisms $L/^{\mathbb L} n\cong \mu_n(k)\otimes^{\mathbb L} R$. This comes with a distinguished equivalence
\[
\mathcal D_{\mathrm{qc}}(\mathrm{MG}_k)\cong \mathcal D_{MT}(k),
\]
sending the universal $L$ to $\mathbb Z(1)$.

As preparation for the following definition, recall that for any field $K$, the classifying stack $\ast/\mathrm{Gal}(\overline{K}/K)$ is canonical. One way to see this is to write down its functor of points. Most naturally, $\ast/\mathrm{Gal}(\overline{K}/K)$ should be considered here as a stack on the category of profinite sets, i.e.~a condensed anima. In this world, it is the ``moduli space of algebraic closures of $K$'' in the sense that for all profinite sets $S$, maps $S\to \ast/\mathrm{Gal}(\overline{K}/K)$ are given by a sheaf of $K$-algebras $\tilde{K}_S$ over $S$ such that at each point $s\in S$, the stalk $\tilde{K}_s$ is an algebraic closure of $K$. As sheaves on $S$ are equivalent to modules over $\mathrm{Cont}(S,\mathbb Z)$, this datum is also equivalent to the datum of a $K$-algebra $\tilde{K}$ with $\pi_0 \mathrm{Spec}(\tilde{K})\cong S$ such that each connected component is the spectrum of an algebraic closure of $K$.

We will however have to transmute this condensed anima back to the world of algebraic stacks, via sending a profinite set $S$ to $\mathrm{Spec}(S,\mathbb Z)$. One can then describe its functor of points on suitably local spaces, for example w-strictly local rings $R$, as being a $K$-algebra $\tilde{K}$ with $\pi_0 \mathrm{Spec}(\tilde{K})\cong \pi_0 \mathrm{Spec}(R)$, such that each connected component is an algebraic closure of $K$.

We now give an analogue of this definition for $K=\breve{E}$, and replacing the algebraic closure by its completion. Moreover, we replace the absolute Galois group by a motivic version.

\begin{definition} The stack $\mathrm{MG}_{\breve{E}}$ over $\mathrm{MG}_k$ parametrizes, on w-strictly local rings $R$, a strictly totally disconnected perfectoid $\breve{E}$-algebra $C$ with an isomorphism of profinite sets $\pi_0 \mathrm{Spa}(C)\cong \pi_0 \mathrm{Spec}(R)$ and each completed residue field a completed algebraic closure of $\breve{E}$; together with a map
\[
C^\times/(1+C_{<1})[-1]\to L
\]
of sheaves on $\pi_0 \mathrm{Spa}(C)\cong \pi_0\mathrm{Spec}(R)$, compatibly lifting the $\mathbb Z/n$-linear map
\[
(C^\times/(1+C_{<1})[-1])/^{\mathbb L} n = \mu_n(k)\to L/^{\mathbb L} n
\]
for all $n$ prime to $p$. (Here, abusively, $\mu_n(k)$ denotes the constant sheaf on $\pi_0 \mathrm{Spec}(R)$.)

On $\mathrm{MG}_{\breve{E}}$, there is a Frobenius action, acting on $\breve{E}$ and simultaneously scaling the map $C^\times/(1+C_{<1})[-1]\to L$ by $q$. Let $\mathrm{MG}_{\Div^1}$ be the quotient of $\mathrm{MG}_{\breve{E}}$ by this Frobenius action.
\end{definition}

If one fixes a separable closure $\overline{\breve{E}}$ of $\breve{E}$, then there is a map from $\mathrm{MG}_{\breve{E}}$ to the classifying space of $I_E=\mathrm{Gal}(\overline{\breve{E}}/\breve{E})$, parametrizing isomorphisms of $C$ with the completed algebraic closure of $\overline{\breve{E}}$ (base changed to $\pi_0 R$). The fibre of the map
\[
\mathrm{MG}_{\breve{E}}\to \ast/I_E
\]
is precisely the stack $\mathrm{MG}_C$ introduced in \cite{BerkovichMotives}. It follows that the map
\[
\mathrm{MG}_{\breve{E}}\to \mathrm{MG}_k
\]
is a gerbe banded by a group scheme that is an extension
\[
0\to \varprojlim_n L\to \mathrm{ID}_E\to I_E\to 0
\]
where $\mathrm{ID}_E$ stands for ``inertia--Deligne''; it is the ``inertia subgroup of the Weil--Deligne group''. Picking $\overline{\breve{E}}$ together with a uniformizer $\pi$ of $E$ and roots $\pi^{1/n}$ in $\overline{\breve{E}}$ for $n$ prime to $p$ yields a splitting of this gerbe.

Using Artin motives and the explicit description of $\mathbb Z(1)=\overline{\mathbb G}_m[-1]$, there is a natural $\mathcal D_{\mathrm{qc}}(\mathrm{MG}_k)\cong \mathcal D_{MT}(k)$-linear symmetric monoidal functor
\[
\mathcal D_{\mathrm{qc}}(\mathrm{MG}_{\breve{E}})\to \mathcal D_{MT}(\mathrm{Spd}(\breve{E}))
\]
which descends to a $\mathcal D_{\mathrm{qc}}(\mathrm{MG}_k)\cong \mathcal D_{MT}(k)$-linear symmetric monoidal functor
\[
\mathcal D_{\mathrm{qc}}(\mathrm{MG}_{\Div^1})\to \mathcal D_{MT}(\Div^1).
\]
More precisely, to construct the first functor, we aim to construct more generally for any pro-finite \'etale $\breve{E}$-algebra $\breve{E}'$ a similar functor
\[
\mathcal D_{\mathrm{qc}}(\mathrm{MG}_{\breve{E}'})\to \mathcal D_{MT}(\mathrm{Spd}(\breve{E}')),
\]
using the analogue $\mathrm{MG}_{\breve{E}'}$ obtained by replacing $\breve{E}$ by $\breve{E}'$ in the definition. By descent, it suffices to give this construction when the completion $C$ of $\breve{E}'$ is strictly totally disconnected; this descent step implicitly makes use of Artin motives. In the strictly totally disconnected case $\mathrm{MG}_{\breve{E}'}$ reduces to the moduli space of maps $C^\times/(1+C_{<1})[-1]\to L$; but $C^\times/(1+C_{<1})[-1] = \mathbb Z(1)(C)$, so we get the desired functor in this case, cf.~also \cite[Section 11]{BerkovichMotives}.

\begin{theorem} The functor
\[
\mathcal D_{\mathrm{qc}}(\mathrm{MG}_{\Div^1})\to \mathcal D_{MT}(\Div^1)
\]
is an equivalence, and the induced functor
\[
\mathcal D_{\mathrm{qc}}(\mathrm{MG}_{\Div^1})\otimes_{\mathcal D_{\mathrm{qc}}(\mathrm{MG}_k)} \mathcal D_{\mathrm{mot}}(k)\to \mathcal D_{\mathrm{mot}}(\Div^1)
\]
is an equivalence.
\end{theorem}

\begin{proof} It suffices to prove the similar assertions for $\breve{E}$ in place of $\mathrm{Div}^1$. In that case, both sides are naturally hypercomplete sheaves over the \'etale site of $\mathrm{Spec}(\breve{E})$, and it suffices to understand the stalk at the algebraic closure. This reduces us to the assertions
\[
\mathcal D_{\mathrm{qc}}(\mathrm{MG}_C)\cong \mathcal D_{MT}(\mathrm{Spa}(C)),
\]
\[
\mathcal D_{\mathrm{qc}}(\mathrm{MG}_C)\otimes_{\mathcal D_{\mathrm{qc}}(\mathrm{MG}_k)} \mathcal D_{\mathrm{mot}}(k)\cong \mathcal D_{\mathrm{mot}}(C)
\]
proved in \cite[Theorem 11.9]{BerkovichMotives}.
\end{proof}

\subsection{Stacks of $L$-parameters} Using the stack $\mathrm{MG}_{\Div^1}$, we can give a canonical definition of stacks of $L$-parameters, as algebraic stacks over $\mathrm{MG}_k$. Recall that the Langlands dual group $\widehat{G}$ is canonically defined as an algebraic group over $\mathrm{MG}_{\mathrm{Div}^1}$ (which incorporates the Galois action and the Tate twist implicit in geometric Satake). More precisely, it is equipped with a maximal torus and Borel $\widehat{T}\subset \widehat{B}\subset \widehat{G}$ with $X_\ast(\widehat{T})$ given by the pullback of $X^\ast(T)$ under the natural map $\mathrm{MG}_{\mathrm{Div}^1}\to \ast/\mathrm{Gal}(\overline{E}/E)$ (where as above the classifying stack of $\mathrm{Gal}(\overline{E}/E)$ is well-defined independently of a choice of $\overline{E}$). Moreover, the simple root spaces $\widehat{U}_a$ of $\widehat{G}$ are identified with the first Tate twist $L$, as in \cite[Section VI.11]{FarguesScholze}. This pins down $\widehat{G}$ uniquely up to unique isomorphism.

\begin{definition} Let $\mathrm{Par}_G$ be the algebraic stack over $\mathrm{MG}_k$ taking any ring $R$ with a map $\mathrm{Spec}(R)\to \mathrm{MG}_k$ to the groupoid of $\widehat{G}$-torsors over $\mathrm{MG}_{\Div^1}\times_{\mathrm{MG}_k} \mathrm{Spec}(R)$.
\end{definition}

Let us make this more explicit, under suitable choices. First, we can pick an isomorphism $k^\times\cong \mathbb Q/\mathbb Z[\tfrac 1p]$ yielding a cover $\mathrm{Spec}(\mathbb Z[\tfrac 1p])\to \mathrm{MG}_k$, and we describe only the pullback. Next, we pick a uniformizer $\pi\in E$, a separable closure $\overline{\breve{E}}$ and compatible roots $\pi^{1/n}\in \overline{\breve{E}}$ for $n$ prime to $p$. This yields a section
\[
\mathrm{Spec}(\mathbb Z[\tfrac 1p])\to \mathrm{MG}_{\Div^1}\times_{\mathrm{MG}_k} \mathrm{Spec}(\mathbb Z[\tfrac 1p])
\]
and writes $\mathrm{MG}_{\Div^1}\times_{\mathrm{MG}_k} \mathrm{Spec}(\mathbb Z[\tfrac 1p])$ as the classifying stack of some group scheme $\mathrm{WD}_E$ over $\mathrm{Spec}(\mathbb Z[\tfrac 1p])$. This sits naturally in a short exact sequence
\[
1\to \mathrm{ID}_E\to \mathrm{WD}_E\to \mathbb Z\to 1
\]
where the inertia--Deligne group $\mathrm{ID}_E$ again sits in an extension
\[
1\to \varprojlim_n \mathbb G_a\to \mathrm{ID}_E\to I_E\to 1
\]
of the inertia group $I_E$ by the rationalized version $\varprojlim_n \mathbb G_a$ of the additive group. This extension is naturally split over the wild inertia group $P_E\subset I_E$. Indeed, the group $C^\times/(1+C_{<1})$ depends only on the maximal tamely ramified subfield $C^t := C^{P_E}\subset C$; more precisely, the map
\[
(C^t)^\times/(1+C^t_{<1})\otimes_{\mathbb Z} \mathbb Z[\tfrac 1p]\to C^\times/(1+C_{<1})
\]
is an isomorphism. Let
\[
\mathrm{ID}_E^t = \mathrm{ID}_E/P_E
\]
be the tame quotient. This sits again in an extension
\[
1\to \varprojlim_n \mathbb G_a\to \mathrm{ID}_E^t\to I_E^t\to 1
\]
where $I_E^t\cong \prod_{\ell\neq p}\mathbb Z_\ell(1)$ is the tame inertia. Our choice of $k^\times\cong \mathbb Q/\mathbb Z[\tfrac 1p]$ trivializes all these Tate twists, and gives a distinguished generator $\tau\in I_E^t$. Our choices in fact yield a lift of this generator to $\mathrm{ID}_E^t$. In the moduli description, the given $\mathrm{Spec}(\mathbb Z[\tfrac 1p])$-point of $\mathrm{MG}_{\mathrm{Div}^1}$ comes from the extension
\[
1\to \mathbb Z[\tfrac 1p]\to M\to C^{t\times}/(1+C^t_{<1})\otimes_{\mathbb Z} \mathbb Z[\tfrac 1p]\to 1
\]
where $M=\mathbb Q\times\mathbb Q$ projecting onto
\[
C^{t\times}/(1+C^t_{<1})\otimes_{\mathbb Z} \mathbb Z[\tfrac 1p]\cong \mathbb Q\oplus \mathbb Q/\mathbb Z[\tfrac 1p]
\]
via our choice of $k^\times\cong \mathbb Q/\mathbb Z[\tfrac 1p]$ and the choice of roots $\pi^{1/n}$ of $\pi$. Then we get an automorphism of this extension, given by acting on $M$ via addition of the composite map
\[
M\to C^{t\times}/(1+C^t_{<1})\otimes_{\mathbb Z} \mathbb Z[\tfrac 1p]\to \mathbb Q\to M
\]
where the second map is the valuation map, and the third map is the fixed map $\mathbb Q\to M$ coming from our choice of roots of $\pi$ and $k^\times\cong \mathbb Q/\mathbb Z[\tfrac 1p]$.

Thus, we get a canonical injective map
\[
\mathbb Z[\tfrac 1p]\hookrightarrow \mathrm{ID}_E^t.
\]
If we fix a Frobenius lift on $C$ fixing all $\pi^{1/n}$ for $n$ prime to $p$, then this subgroup is stable under conjugation by this Frobenius lift. This yields a subgroup $\mathrm{WD}_E^{\mathrm{disc}}\subset \mathrm{WD}_E$ which sits in compatible exact sequences
\[\xymatrix{
1\ar[r] & \mathrm{ID}_E^{\mathrm{disc}}\ar[r]\ar[d] & \mathrm{WD}_E^{\mathrm{disc}}\ar[r]\ar[d] & \mathbb Z\ar[r]\ar[d] & 1\\
1\ar[r] & \mathrm{ID}_E\ar[r] & \mathrm{WD}_E\ar[r] & \mathbb Z\ar[r] & 1
}\]
and
\[\xymatrix{
1\ar[r] & P_E\ar[r]\ar[d] & \mathrm{ID}_E^{\mathrm{disc}}\ar[r]\ar[d] & \mathbb Z[\tfrac 1p]\ar[r]\ar[d] & 1\\
1\ar[r] & P_E\ar[r] & \mathrm{ID}_E\ar[r] & \mathrm{ID}_E^t\ar[r] & 1.
}\]

The group $\mathrm{WD}_E^{\mathrm{disc}}$ is precisely the version of the Weil group of $E$ with discretized inertia used in \cite{DHKM}. The following proposition shows that as far as representations go, one can replace $\mathrm{WD}_E$ by its discretization.

\begin{proposition} For any $\mathbb Z[\tfrac 1p]$-algebra $R$, pullback along $\mathrm{WD}_E^{\mathrm{disc}}\subset \mathrm{WD}_E$ yields an exact equivalence between their respective categories of representations on finite projective $R$-modules, and hence also of their maps towards $\widehat{G}$.
\end{proposition}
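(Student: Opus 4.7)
The plan is to reduce to the tame quotient via the common wild inertia $P_E$, and then use Jordan decomposition to match the algebraic inertia action on the $\mathrm{WD}_E$-side with the discrete $\mathbb Z[\tfrac 1p]$-action on the $\mathrm{WD}_E^{\mathrm{disc}}$-side.

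Both groups contain $P_E$ in the same way, and any algebraic representation of the profinite group scheme $P_E$ on a finite free $R$-module factors through a finite discrete quotient (the comodule structure lands in a finitely generated sub-Hopf algebra). Passing to such a quotient on both sides reduces us to comparing representations of the tame $\mathbb Z$-extensions $\mathrm{WD}_E^t/P_E$ and $\mathrm{WD}_E^{\mathrm{disc}}/P_E$, whose inertia pieces are $\mathrm{ID}_E^t$ (an extension of $I_E^t \cong \prod_{\ell \neq p} \mathbb Z_\ell(1)$ by the rationalized additive group $\varprojlim_n \mathbb G_a$) and $\mathbb Z[\tfrac 1p]$, respectively.

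Given a representation $\rho$ of $\mathrm{WD}_E^{\mathrm{disc}}$, set $T := \rho(1)$ for the canonical generator $1 \in \mathbb Z[\tfrac 1p]$. The Frobenius conjugation relation $\phi \cdot 1 \cdot \phi^{-1} = q$ yields $\rho(\phi)\, T\, \rho(\phi)^{-1} = T^q$, so the multiset of formal eigenvalues of $T$ is stable under $\lambda \mapsto \lambda^q$; being finite, these roots are roots of unity of order prime to $p$. Hence $T^N$ is unipotent for some $N$ coprime to $p$, and $T$ admits a canonical Jordan decomposition $T = T_s T_u$ over $R$ with $T_s$ semisimple of finite order prime to $p$, $T_u$ unipotent, both given by polynomials in $T$. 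The compatible system $\rho(1/n)$ for $n$ prime to $p$ decomposes in parallel, producing uniquely an algebraic $\varprojlim_n \mathbb G_a$-action recovering $T_u$ by exponentiation of a nilpotent operator, together with a continuous $\prod_{\ell\neq p}\mathbb Z_\ell(1)$-action sending the canonical generator $\tau$ to $T_s$. Together these uniquely extend to a representation of $\mathrm{ID}_E^t$; Frobenius equivariance is automatic from the uniqueness, and exactness of the extension functor is clear from the canonical polynomial nature of the decomposition.

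The main obstacle is producing the Jordan decomposition over a general (possibly non-reduced, non-Noetherian) $\mathbb Z[\tfrac 1p]$-algebra $R$, since naively $\tfrac 1 N$ is not available. This is handled by working with the integrally defined idempotents arising from the factorization of the characteristic polynomial $\chi_T$ in $R[X]$ grouped by $q$-power orbits of roots, together with the nilpotence of $T^{q^k-1} - I$ in $\mathrm{End}_R(V)$; these produce the projectors onto the generalized eigenspaces without requiring any fractional inversion. The statement about maps into $\widehat G$ then follows by Tannakian formalism applied to the equivalence of representation categories just established.
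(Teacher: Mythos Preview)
Your core idea---Jordan-decompose the image of the tame generator and let the semisimple part carry the $\hat{\mathbb Z}^p$-action while the unipotent part carries the additive $\mathbb G_a$-action---is exactly what the paper does, but only \emph{after} reducing to $\mathbb Q$-algebras. The paper's proof first uses profinite gluing to split into the torsion case (quoted from \cite[proof of Theorem VIII.1.3]{FarguesScholze}) and the rational case; over $\mathbb Q$ the extension $\mathrm{ID}_E$ splits as $\mathbb G_a\times I_E$ and the Jordan decomposition is unproblematic because every integer prime to $p$ is now invertible. Your attempt to run the Jordan decomposition uniformly over an arbitrary $\mathbb Z[\tfrac 1p]$-algebra $R$ is where the argument becomes shaky.

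Concretely, two points are not justified. First, the step ``$T^N$ unipotent $\Rightarrow$ $T=T_sT_u$ with $T_s^N=1$'' is a Hensel-type lift along $Y\mapsto Y^N-1$, whose derivative $NY^{N-1}$ is a unit only when $N\in R^\times$; for $R$ an $\mathbb F_\ell$-algebra with $\ell\mid N$ this fails, and your appeal to ``integrally defined idempotents from the factorization of $\chi_T$'' does not help since the cyclotomic factors of $X^N-1$ are not pairwise coprime over $\mathbb Z[\tfrac 1p]$. In fact over $\mathbb F_\ell$ the group $\varprojlim_n\mathbb G_a$ collapses to $0$, so on the $\mathrm{WD}_E$-side there is no room for a unipotent part at all; correspondingly, on the $\mathrm{WD}_E^{\mathrm{disc}}$-side the existence of all $\ell$-power roots $\rho(1/\ell^k)$ forces the unipotent part of $\rho(1)$ to be trivial in bounded dimension. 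This matching is true, but it is a genuinely different mechanism from Jordan decomposition, and your sketch does not account for it. Second, your $P_E$-reduction jumps straight to the tame quotient: knowing that $P_E$ acts through some finite quotient $Q$ does not let you replace $\mathrm{WD}_E$ by $\mathrm{WD}_E/P_E$, only by $\mathrm{WD}_E/\ker(P_E\to Q)$, which still carries $Q$. The paper handles this by replacing $E$ by finite extensions and using descent.

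So the strategy is right, but the uniform-in-$R$ execution has real gaps; the paper's fracture into torsion and rational coefficients is what makes the argument go through cleanly.
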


\begin{proof} By Tannaka, it suffices to handle the case of representations. First, we observe that we can assume that $R$ is a finitely generated $\mathbb Z$-algebra. For both the Weil--Deligne group and its discretization, representations are trivial on some open subgroup of the inertia $P_E$, writing both categories of representations as increasing unions; it suffices to handle the equivalence for their respective quotients by open subgroups of $P_E$. The resulting quotient of $\mathrm{WD}_E^{\mathrm{disc}}$ is a finitely presented group, so any representation is defined over some finitely generated $\mathbb Z$-algebra; a similar argument applies to $\mathrm{WD}_E$.

In particular, $R$ can be written as the fibre product (``arithmetic fracture square'')
\[
R=\hat{R}\times_{\hat{R}\otimes \mathbb Q} R\otimes \mathbb Q,
\]
where $\hat{R} = \mathrm{lim}_n R/n$ is the profinite completion of $R$. (In general, this holds as soon as $R$ has bounded $p^\infty$-torsion for any prime $p$.) This reduces the statement to the cases of $\hat{R}$, $\hat{R}\otimes \mathbb Q$, and $R\otimes \mathbb Q$. But the case of $\hat{R}$ further reduces to $R/n$ for all $n$. Thus, we can reduce to the cases where $R$ is either torsion, or a $\mathbb Q$-algebra. In the torsion case, the result is \cite[proof of Theorem VIII.1.3]{FarguesScholze}. It remains to handle the case where $R$ is a $\mathbb Q$-algebra. After base change to $\mathbb Q$, the group scheme $\mathrm{ID}_E$ splits as $\mathbb G_a\times I_E$, and hence $\mathrm{WD}_E$ is after base change to $\mathbb Q$ the usual Weil--Deligne group scheme. As in the first paragraph, we can assume that the representations are trivial on some fixed open subgroup of $P_E$. Allowing ourselves to replace $E$ by finite extensions and using descent, we can then reduce to the case where this subgroup is all of $P_E$. Thus, it suffices to show that, on $\mathbb Q$-algebras, the categories of representations of $\mathbb Z[\tfrac 1p]\rtimes \mathbb Z$ and $(\mathbb G_a\times \widehat{\mathbb Z}^p)\rtimes \mathbb Z$ are equivalent, where $\mathbb Z$ acts on the subgroups via multiplication by $q$. But the action of $\tau\in \mathbb Z[\tfrac 1p]$ must be conjugate to the action of $\tau^q$ which implies that all eigenvalues of $\tau$ must be roots of unity of order prime to $p$. Now the $\widehat{\mathbb Z}^p$-part keeps track of the semisimple part of $\tau$, while the $\mathbb G_a$-part takes care of the unipotent part of $\tau$.
\end{proof}

In particular, $\mathrm{Par}_G$ agrees with the space constructed in \cite{DHKM}:

\begin{corollary}\label{cor:comparisontoDHKM} After making the choices above, the base change of $\mathrm{Par}_G$ along $\mathrm{Spec}(\mathbb Z[\tfrac 1p])\to \mathrm{MG}_k$ is isomorphic to the quotient of the scheme $Z^1(\mathrm{WD}_E^{\mathrm{disc}},\widehat{G})$ of $1$-cocycles $\mathrm{WD}_E^{\mathrm{disc}}\to \widehat{G}$ by the action of $\widehat{G}$-conjugation.
\end{corollary}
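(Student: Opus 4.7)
The plan is to unwind the definitions and chain together the identifications already established in this section. First, under the chosen trivializations ($k^\times\cong\mathbb Q/\mathbb Z[\tfrac 1p]$, the uniformizer $\pi$, and the compatible roots $\pi^{1/n}$), the discussion preceding the corollary identifies the base change $X_{\Div^1}\times_X \mathrm{Spec}(\mathbb Z[\tfrac 1p])$ with the classifying stack $B\mathrm{WD}_E$ of the group scheme $\mathrm{WD}_E$ over $\mathbb Z[\tfrac 1p]$. So by definition, for any $\mathbb Z[\tfrac 1p]$-algebra $R$, the groupoid $\mathrm{Par}_G(R)$ is the groupoid of $\widehat{G}$-torsors on $B\mathrm{WD}_{E,R}$.

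Next, by fpqc descent, a $\widehat{G}$-torsor on $B\mathrm{WD}_{E,R}$ is the datum of a $\widehat{G}$-torsor $\mathcal P$ on $\mathrm{Spec}(R)$ together with a compatible $\mathrm{WD}_{E,R}$-action; trivializing $\mathcal P$ (which one may do fpqc-locally on $R$) presents this groupoid as the quotient stack $[\underline{\mathrm{Hom}}(\mathrm{WD}_E,\widehat{G})/\widehat{G}]$ with $\widehat{G}$ acting by conjugation, where $\underline{\mathrm{Hom}}$ denotes the functor of group-scheme homomorphisms over $\mathrm{Spec}(\mathbb Z[\tfrac 1p])$. One then invokes the preceding proposition: pullback along $\mathrm{WD}_E^{\mathrm{disc}}\hookrightarrow \mathrm{WD}_E$ induces an exact tensor equivalence of representation categories on finite free $R$-modules and, via Tannakian reconstruction applied to $\widehat{G}$, an equivalence between $\widehat{G}$-valued homomorphisms, functorially in $R$. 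This gives an isomorphism of functors $\underline{\mathrm{Hom}}(\mathrm{WD}_E,\widehat{G})\cong \underline{\mathrm{Hom}}(\mathrm{WD}_E^{\mathrm{disc}},\widehat{G})$ intertwining the $\widehat{G}$-conjugation actions.

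Finally, I would identify the right-hand side with the DHKM construction: the scheme $Z^1(\mathrm{WD}_E^{\mathrm{disc}},\widehat{G})$ from \cite{DHKM} is by construction the functor of such homomorphisms (or $1$-cocycles, with $\widehat{G}$ given the trivial action at this stage, so the cocycle condition reduces to homomorphism), and the $\widehat{G}$-action matches by design. The isomorphism of stacks is then forced by the universal property.

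The main obstacle is the middle step: checking that the moduli of $\widehat{G}$-torsors on $B\mathrm{WD}_{E,R}$ admits the presentation as an honest quotient stack of a scheme by $\widehat{G}$, functorially in $R$. For $\mathrm{WD}_E^{\mathrm{disc}}$ this representability is precisely what \cite{DHKM} establish (using the pro-$p$ filtration on $P_E$ and the fact that continuous representations of $\mathrm{WD}_E^{\mathrm{disc}}$ factor through arithmetically controllable finite-type quotients), so once the equivalence of Hom-functors from the preceding proposition is upgraded to functors of $R$-points, nothing new needs to be proved about algebraicity or the scheme structure on $Z^1$.
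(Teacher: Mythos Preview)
Your overall strategy is exactly the one the paper has in mind: the corollary is stated without proof because it is meant to follow immediately from the preceding proposition together with the identification $X_{\Div^1}\times_X\mathrm{Spec}(\mathbb Z[\tfrac 1p])\cong B\mathrm{WD}_E$, and your unwinding of $\widehat{G}$-torsors on $B\mathrm{WD}_E$ via descent along the chosen section is the correct way to spell this out.

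There is, however, one genuine slip. You write that ``$\widehat{G}$ [is] given the trivial action at this stage, so the cocycle condition reduces to homomorphism.'' This is false in general. The paper defines $\widehat{G}$ as a group scheme over $X_{\Div^1}$, explicitly noting that this \emph{incorporates the Galois action} (and the Tate twist) coming from geometric Satake. After pulling back along the section $\mathrm{Spec}(\mathbb Z[\tfrac 1p])\to B\mathrm{WD}_E$, one obtains $\widehat{G}$ over $\mathbb Z[\tfrac 1p]$ together with a generally nontrivial $\mathrm{WD}_E$-action (the usual $L$-group action through the quotient to $\mathrm{Gal}$). A $\widehat{G}$-torsor on $B\mathrm{WD}_{E,R}$ is then a $\mathrm{WD}_{E,R}$-equivariant $\widehat{G}$-torsor on $\mathrm{Spec}(R)$, and trivializing it yields a genuine $1$-cocycle $\mathrm{WD}_E\to\widehat{G}$, not a homomorphism; this is precisely why the corollary and \cite{DHKM} speak of $Z^1$ rather than $\underline{\Hom}$. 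The fix is mechanical: replace $\underline{\Hom}(\mathrm{WD}_E,\widehat{G})$ by the functor of $1$-cocycles throughout your middle step, and note that the equivalence from the preceding proposition (which is stated Tannakianly, hence applies to $\widehat{G}$ with its $\mathrm{WD}_E$-structure) transports $1$-cocycles for $\mathrm{WD}_E$ to $1$-cocycles for $\mathrm{WD}_E^{\mathrm{disc}}$, compatibly with conjugation. With that correction your argument is complete.
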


By \cite{DHKM}, the scheme $Z^1(\mathrm{WD}_E^{\mathrm{disc}},\widehat{G})$ is a disjoint union of affine schemes, each of which is flat and a local complete intersection over $\mathbb Z[\tfrac 1p]$, of dimension $\mathrm{dim}(\widehat{G})$.

\section{Geometric Satake}

The final topic that needs to be adapted to the motivic formalism is the geometric Satake equivalence. Such a motivic version was first obtained by Richarz--Scholbach \cite{RicharzScholbach} and we follow their ideas. A more refined statement is due to Cass--van den Hove--Scholbach \cite{CassvandenHoveScholbach} who in particular also looked at Beilinson--Drinfeld Grassmannians.

From \cite[Section VI.1 -- VI.6]{FarguesScholze}, everything adapts without real change. We replace the condition of being universally locally acyclic with the condition of being suave; and ``locally constant with perfect fibres'' should be read as ``dualizable''. In particular, as the analogue of \cite[Definition VI.6.1]{FarguesScholze}, we define the category
\[
\mathcal D_{\mathrm{mot}}^{\mathrm{ULA}}(\Hloc_{G,S/\mathrm{Div}^d_{\mathcal Y}})\subset \mathcal D_{\mathrm{mot}}(\Hloc_{G,S/\mathrm{Div}^d_{\mathcal Y}})
\]
as the subcategory of all those sheaves whose pullback to $\mathrm{Gr}_{G,S/\mathrm{Div}^d_{\mathcal Y}}$ is bounded (i.e., supported on finitely many Schubert cells), and suave over $S$.

With this change, \cite[Proposition VI.6.4, VI.6.5, Corollary VI.6.6]{FarguesScholze} hold true, with the same proof. The statement of \cite[Corollary VI.6.7]{FarguesScholze} changes to the following, where we still write a superscript $\mathrm{ULA}$ for the condition of being suave.

\begin{proposition}\label{prop:basechangeULAHloc} For a complete algebraically closed extension $C$ of $E$ with residue field $k$, and a split reductive group $G$ over $\mathcal O_C$, the base change of the reduction functor
\[
\mathcal D_{\mathrm{mot}}^{\mathrm{ULA}}(\Hloc_{G,\mathrm{Spd} \mathcal O_C/\mathrm{Div}^1_{\mathcal Y}})\otimes_{\mathcal D_{\mathrm{mot}}(\mathcal O_C)^{\mathrm{dual}}} \mathcal D_{\mathrm{mot}}(k)^{\mathrm{dual}}\to \mathcal D_{\mathrm{mot}}^{\mathrm{ULA}}(\Hloc_{G,\mathrm{Spd} k/\mathrm{Div}^1_{\mathcal Y}})
\]
is an equivalence, and also the base change of the generic fibre functor
\[
\mathcal D_{\mathrm{mot}}^{\mathrm{ULA}}(\Hloc_{G,\mathrm{Spd} \mathcal O_C/\mathrm{Div}^1_{\mathcal Y}})\otimes_{\mathcal D_{\mathrm{mot}}(\mathcal O_C)^{\mathrm{dual}}} \mathcal D_{\mathrm{mot}}(C)^{\mathrm{dual}}\to \mathcal D_{\mathrm{mot}}^{\mathrm{ULA}}(\Hloc_{G,\mathrm{Spd} C/\mathrm{Div}^1_{\mathcal Y}})
\]
is an equivalence.
\end{proposition}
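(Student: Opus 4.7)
The plan is to imitate the proof of \cite[Corollary VI.6.7]{FarguesScholze}, replacing ``locally constant with perfect fibres'' by ``dualizable''. The essential new input is that all suave sheaves in sight are mixed Tate, and the mixed Tate subcategory admits the explicit quasi-coherent description from Section~\ref{sec:div1} and \cite{BerkovichMotives} which transparently respects the required base changes along $\mathcal D_{\mathrm{mot}}(\mathcal O_C)^{\mathrm{dual}}\to \mathcal D_{\mathrm{mot}}(C)^{\mathrm{dual}}$ and $\mathcal D_{\mathrm{mot}}(\mathcal O_C)^{\mathrm{dual}}\to \mathcal D_{\mathrm{mot}}(k)^{\mathrm{dual}}$.

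The first step is to stratify $\Hloc_{G,S/\mathrm{Div}^1_{\mathcal Y}}$ by the Schubert filtration $\Hloc_{G,\leq\mu,S}$ indexed by $\mu\in X_\ast(T)^+$ and to reduce, via the associated excision triangles, to the corresponding statement on each locally closed Schubert stratum $\Hloc_{G,\mu,S}$. This reduction uses that suavity is preserved under $i_\ast$ for closed immersions and $j_!$ for partially proper open immersions in the $\mathcal D_{\mathrm{mot}}^{\mathrm{oc}}$-formalism (as already invoked in the preceding sections, via \cite{HeyerMann}), and the fact that the stratification itself is geometric, so pulls back compatibly under the base changes $\mathrm{Spd}\,C,\,\mathrm{Spd}\,k\to\mathrm{Spd}\,\mathcal O_C$. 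For the second step, each stratum $\Hloc_{G,\mu,S}\to S$ is cohomologically smooth with geometric fibres cellular (by the Bruhat decomposition of the affine Schubert cell into affine spaces), and so every object of $\mathcal D_{\mathrm{mot}}^{\mathrm{ULA}}(\Hloc_{G,\mu,S})$ is mixed Tate relative to $S$. Combining the description of $\mathcal D_{MT}$ over each of $\mathcal O_C,C,k$ from \cite{BerkovichMotives} as perfect complexes on an algebraic stack over $X$, with the relative cellular structure of the Schubert cell, identifies the category of suave mixed Tate sheaves on each stratum with perfect complexes on an explicit algebraic stack. The resulting description manifestly satisfies the desired base change, giving the equivalence on each stratum.

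The main obstacle I expect is the global assembly: relative tensor product of presentable $\infty$-categories need not commute with fibre sequences induced by a stratification, so one must check that the semi-orthogonal gluing for the Schubert stratification is respected by tensoring with $\mathcal D_{\mathrm{mot}}(C)^{\mathrm{dual}}$ and $\mathcal D_{\mathrm{mot}}(k)^{\mathrm{dual}}$ over $\mathcal D_{\mathrm{mot}}(\mathcal O_C)^{\mathrm{dual}}$. The key lever should be that these base-change categories are dualizable as modules over $\mathcal D_{\mathrm{mot}}(\mathcal O_C)^{\mathrm{dual}}$, so tensoring with them preserves exact sequences, together with the finite length of the non-trivial Schubert filtration below any fixed $\leq\mu$. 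This is essentially the same bookkeeping carried out in \cite{RicharzScholbach,CassvandenHoveScholbach} for motivic Satake, and once it is in place, assembling the stratum-by-stratum equivalences yields the proposition.
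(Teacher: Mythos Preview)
Your stratification-and-reduction plan is the right skeleton and matches how \cite[Corollary VI.6.7]{FarguesScholze} proceeds, but you then take an unnecessary detour. The arguments of \cite[Proposition VI.6.4, VI.6.5]{FarguesScholze}, carried over verbatim to the motivic setting, already show that for each Schubert stratum the category of suave sheaves is \emph{equivalent to $\mathcal D_{\mathrm{mot}}(S)^{\mathrm{dual}}$ itself} (the strata of $\Hloc$ are classifying stacks of connected pro-algebraic groups over $S$, so pullback from the base is an equivalence on suave objects). Once you are reduced to the base, the assertion is a tautology: the left-hand side becomes $\mathcal D_{\mathrm{mot}}(\mathcal O_C)^{\mathrm{dual}}\otimes_{\mathcal D_{\mathrm{mot}}(\mathcal O_C)^{\mathrm{dual}}}\mathcal D_{\mathrm{mot}}(k)^{\mathrm{dual}}$ (resp.\ with $C$), which is $\mathcal D_{\mathrm{mot}}(k)^{\mathrm{dual}}$ (resp.\ $\mathcal D_{\mathrm{mot}}(C)^{\mathrm{dual}}$) on the nose. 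This is exactly the paper's proof: ``the statements are tautologies, as we put in the base change.'' No mixed Tate input, no explicit quasi-coherent model of $\mathcal D_{MT}(\mathcal O_C)$, and no appeal to \cite{BerkovichMotives} is needed.

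Your mixed Tate step also has a concrete problem: the strata of $\Hloc_G$ are not the Schubert cells $\mathrm{Gr}_{G,\mu}$ but their quotients by $L^+G$, i.e.\ classifying stacks; the phrase ``geometric fibres cellular (by the Bruhat decomposition of the affine Schubert cell into affine spaces)'' does not describe them, and in any case does not by itself force suave sheaves to be mixed Tate. The paper's mixed Tate results for $\Hloc$ are proved \emph{later}, via Demazure--Bott--Samelson resolutions on the affine flag variety, and are logically downstream of this proposition. So your argument, as written, either invokes an unjustified claim or is circular. The fix is simply to drop the mixed Tate layer entirely and use the tautology above; your concern about ``global assembly'' then also evaporates, since the gluing functors between strata are themselves computed over the base and the base-changed statement on each stratum is an identity.
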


\begin{proof} The arguments in \cite[Corollary VI.6.7]{FarguesScholze} reduce these assertions to the base $\mathrm{Spd}\mathcal O_C$ with its special and generic fibre, where the suave property reduces to dualizability. But for the base, the statements are tautologies, as we put in the base change.
\end{proof}

We will be particularly interested in the case where $C$ is a completed algebraic closure of $E$.

\begin{proposition} If $C$ is a completed algebraic closure of $E$, then the reduction functor
\[
\mathcal D_{\mathrm{mot}}(\mathcal O_C)^{\mathrm{dual}}\to \mathcal D_{\mathrm{mot}}(k)^{\mathrm{dual}}
\]
is an equivalence.
\end{proposition}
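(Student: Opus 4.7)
The plan is to split the comparison into two equivalences,
\[
\mathcal D_{\mathrm{mot}}(\mathcal O_C)^{\mathrm{dual}} \simeq \mathcal D_{\mathrm{mot}}(C)^{\mathrm{dual}} \simeq \mathcal D_{\mathrm{mot}}(k)^{\mathrm{dual}},
\]
the first being the generic-fibre restriction and the second using the description of $\mathcal D_{\mathrm{mot}}(C)$ from \cite{BerkovichMotives} together with the canonical data carried by a completed algebraic closure of $E$.

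For the first equivalence I would argue that pullback along the generic fibre $\mathrm{Spd}(C)\hookrightarrow \mathrm{Spd}(\mathcal O_C)$ is an equivalence on dualizables. This should be of the same flavour as the ``tautology at the base'' invoked at the end of the proof of the preceding proposition: after applying $a^{\prime\ast}$, a dualizable overconvergent motive on the strictly local base $\mathrm{Spd}(\mathcal O_C)$ is determined by, and extends uniquely from, its restriction to the generic fibre.

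For the second equivalence I invoke
\[
\mathcal D_{\mathrm{qc}}(X_C)\otimes_{\mathcal D_{\mathrm{qc}}(X)}\mathcal D_{\mathrm{mot}}(k)\cong \mathcal D_{\mathrm{mot}}(C)
\]
from \cite{BerkovichMotives} (recalled in Section~\ref{sec:div1}), which reduces the claim to showing that for $C$ a completed algebraic closure of $E$ the natural map $X_C\to X$ is canonically an equivalence. Unwinding the definition, $X_C\to X$ is a gerbe banded by $\varprojlim_n L$, classifying lifts of the canonical reduction $C^\times/(1+C_{<1})\to L/n$ to a map $C^\times/(1+C_{<1})[-1]\to L$. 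For such a $C$, once a uniformizer $\pi\in E$ is fixed, the canonical compatible roots $\pi^{1/n}\in C$ for $n$ prime to $p$ (which exist because $C$ contains $\bar E$), together with the canonical roots of unity in $C$, provide exactly the splitting of the extension $1\to \varprojlim_n L\to \mathrm{ID}_E\to I_E\to 1$ described in Section~\ref{sec:div1}. This gives the distinguished lift and identifies $X_C$ with $X$, so the tensor product collapses to $\mathcal D_{\mathrm{mot}}(k)$.

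The main obstacle is making the first step rigorous in the overconvergent Berkovich arc-formalism: in the $\ell$-adic setting the analogue is immediate from proper base change, but here one must check that the generic-fibre functor does not lose information on dualizables and that reduction to $k$ factors through it. The second step is largely structural given the explicit description of $X_C$ and the canonical trivialization data provided by a completed algebraic closure of $E$ rather than an arbitrary complete algebraically closed extension of $\breve E$.
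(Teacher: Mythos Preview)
Your second step contains a genuine error: the map $X_C \to X$ is \emph{not} an equivalence. It is a gerbe banded by the nontrivial group scheme $\varprojlim_n L$ (the ``rationalized additive group'' in the paper's language), and a splitting of this gerbe --- which is all that a choice of $\pi$ and of roots $\pi^{1/n}$ gives you --- identifies $X_C$ with the classifying stack $B_X(\varprojlim_n L)$, not with $X$ itself. Concretely, $\mathcal D_{MT}(C)\cong \mathcal D_{\mathrm{qc}}(X_C)$ is strictly larger than $\mathcal D_{MT}(k)\cong \mathcal D_{\mathrm{qc}}(X)$: any unit $u\in C^\times$ produces a Kummer extension of $\mathbb Z[\tfrac 1p]$ by $\mathbb Z[\tfrac 1p](1)$ in $\mathcal D_{MT}(C)$ which is typically nonsplit, whereas over $k=\overline{\mathbb F}_q$ all such extensions split. (Note also that the roots $\pi^{1/n}$ are not canonical; different choices differ by roots of unity, which is exactly the $I_E^t$-ambiguity.)

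This in turn invalidates your first step. Since $\mathcal D_{\mathrm{mot}}(C)^{\mathrm{dual}}$ is genuinely larger than $\mathcal D_{\mathrm{mot}}(k)^{\mathrm{dual}}$, while the proposition asserts $\mathcal D_{\mathrm{mot}}(\mathcal O_C)^{\mathrm{dual}}\simeq \mathcal D_{\mathrm{mot}}(k)^{\mathrm{dual}}$, the generic-fibre functor $\mathcal D_{\mathrm{mot}}(\mathcal O_C)^{\mathrm{dual}}\to \mathcal D_{\mathrm{mot}}(C)^{\mathrm{dual}}$ cannot be essentially surjective; it is only fully faithful. The paper's argument avoids the generic fibre altogether: it passes to equal characteristic to get a section $k\to \mathcal O_C$, proves full faithfulness of $\mathcal D_{\mathrm{mot}}(k)\to \mathcal D_{\mathrm{mot}}(\mathcal O_C)$ by disc-invariance (writing $\mathcal O_C$ as a completed filtered colimit of power series rings over $k$), and obtains essential surjectivity by a spreading-out argument using the nested closed subspaces $\mathcal M_{\mathrm{arc}}(\mathcal O_{C,r'})\subset \mathcal M_{\mathrm{arc}}(\mathcal O_{C,r})$, whose intersection is $\mathcal M_{\mathrm{arc}}(k)$.
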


\begin{proof} We can assume that $C$ is of equal characteristic and pick a splitting $k\to \mathcal O_C$. We first check that the induced functor
\[
\mathcal D_{\mathrm{mot}}(k)\to \mathcal D_{\mathrm{mot}}(\mathcal O_C)
\]
is fully faithful. For this, consider $\mathcal M_{\mathrm{arc}}(\mathcal O_{C,r})$ with fixed norm, taking the norm of $\pi$ to some $0<r<1$; this is quasicompact, and
\[
\mathcal D_{\mathrm{mot}}(\mathcal O_C)\cong \mathrm{lim}_{r<1} \mathcal D_{\mathrm{mot}}(\mathcal M_{\mathrm{arc}}(\mathcal O_{C,r})).
\]
Thus, it suffices to see that for each $r$, the functor
\[
\mathcal D_{\mathrm{mot}}(k)\to \mathcal D_{\mathrm{mot}}(\mathcal M_{\mathrm{arc}}(\mathcal O_{C,r}))
\]
is fully faithful. This reduces to computing the pushforward of the unit $1\in \mathcal D_{\mathrm{mot}}(\mathcal M_{\mathrm{arc}}(\mathcal O_{C,r}))$. But $\mathcal O_C$ is a completed filtered colimit of power series algebras, so disc invariance shows that this pushforward is trivial.

In particular,
\[
\mathcal D_{\mathrm{mot}}(k)\to \mathcal D_{\mathrm{mot}}(\mathcal O_C)
\]
is still fully faithful on dualizable objects. It remains to see that any dualizable object is in the image. For varying $r'<r$, the maps
\[
\mathcal M_{\mathrm{arc}}(\mathcal O_{C,r'})\to \mathcal M_{\mathrm{arc}}(\mathcal O_{C,r})
\]
are closed immersions whose intersection is just $\mathcal M_{\mathrm{arc}}(k)$ (with trivial norm on $k$). This implies (cf.~\cite[Lemma 10.4, 10.5]{BerkovichMotives}) that
\[
\mathrm{colim}_{r'<r} \mathcal D_{\mathrm{mot}}(\mathcal M_{\mathrm{arc}}(\mathcal O_{C,r'}))^{\mathrm{dual}}\cong \mathcal D_{\mathrm{mot}}(k)^{\mathrm{dual}}.
\]
This implies that any object of $\mathcal D_{\mathrm{mot}}(\mathcal O_C)^{\mathrm{dual}}$ becomes isomorphic to the extension of an object of $\mathcal D_{\mathrm{mot}}(k)^{\mathrm{dual}}$ after pullback to some $\mathcal M_{\mathrm{arc}}(\mathcal O_{C,r'})$. But this covers $\mathrm{Spd}(\mathcal O_C)$, and the isomorphism necessarily descends.
\end{proof}

\begin{corollary} If $C$ is a completed algebraic closure of $E$, there is an equivalence
\[
\mathcal D_{\mathrm{mot}}^{\mathrm{ULA}}(\Hloc_{G,\mathrm{Spd} k/\mathrm{Div}^1_{\mathcal Y}})\otimes_{\mathcal D_{\mathrm{mot}}(k)^{\mathrm{dual}}} \mathcal D_{\mathrm{mot}}(C)^{\mathrm{dual}}\to \mathcal D_{\mathrm{mot}}^{\mathrm{ULA}}(\Hloc_{G,\mathrm{Spd} C/\mathrm{Div}^1_{\mathcal Y}})
\]
compatible with the operations of Verdier duality, $\otimes$, $\sHom$, and $j_! j^\ast$, $j_\ast j^\ast$, $j_! j^!$, $j_\ast j^!$ where $j$ is the locally closed immersion of any Schubert cell.
\end{corollary}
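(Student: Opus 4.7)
The plan is to chain together the two immediately preceding propositions and then observe that the resulting equivalence is induced by base change in the motivic six-functor formalism, so automatically respects all the listed operations.

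First, I would use the previous proposition to identify $\mathcal D_{\mathrm{mot}}(\mathcal O_C)^{\mathrm{dual}}$ with $\mathcal D_{\mathrm{mot}}(k)^{\mathrm{dual}}$. Under this identification the specialization equivalence from the proposition before simplifies (the tensor product is over coefficient categories that already agree) to an honest equivalence
\[
\mathcal D_{\mathrm{mot}}^{\mathrm{ULA}}(\Hloc_{G,\mathrm{Spd} \mathcal O_C/\mathrm{Div}^1_{\mathcal Y}})\cong \mathcal D_{\mathrm{mot}}^{\mathrm{ULA}}(\Hloc_{G,\mathrm{Spd} k/\mathrm{Div}^1_{\mathcal Y}}).
\]
Substituting this into the generic fibre equivalence from the same proposition, and viewing $\mathcal D_{\mathrm{mot}}(C)^{\mathrm{dual}}$ as a $\mathcal D_{\mathrm{mot}}(k)^{\mathrm{dual}}\cong \mathcal D_{\mathrm{mot}}(\mathcal O_C)^{\mathrm{dual}}$-algebra via the generic fibre functor, I obtain the claimed equivalence
\[
\mathcal D_{\mathrm{mot}}^{\mathrm{ULA}}(\Hloc_{G,\mathrm{Spd} k/\mathrm{Div}^1_{\mathcal Y}})\otimes_{\mathcal D_{\mathrm{mot}}(k)^{\mathrm{dual}}} \mathcal D_{\mathrm{mot}}(C)^{\mathrm{dual}}\cong \mathcal D_{\mathrm{mot}}^{\mathrm{ULA}}(\Hloc_{G,\mathrm{Spd} C/\mathrm{Div}^1_{\mathcal Y}}).
\]

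For compatibility with the operations, the key point is that both the specialization and generic fibre functors on the Hecke stack are base-change (pullback) functors in the $\mathcal D_{\mathrm{mot}}^{\mathrm{oc}}$ six-functor formalism. Pullback automatically intertwines with $\otimes$, $\sHom$, and Verdier duality on the dualizable/suave parts, the last because the suave dualizing complex is computed locally on Schubert cells via their cohomological smoothness over $\mathrm{Div}^1_{\mathcal Y}$ (Proposition~\ref{prop:BunGArtin}), which is insensitive to whether the base is $\mathrm{Spd}(\mathcal O_C)$, $\mathrm{Spd}(k)$, or $\mathrm{Spd}(C)$. For the stratum-wise functors $j_!j^\ast$, $j_\ast j^\ast$, $j_!j^!$, $j_\ast j^!$ attached to a Schubert cell $\mathrm{Gr}_{G,\mu}$, compatibility follows from the stability of Schubert cells under the base changes $\mathrm{Spd}(k),\mathrm{Spd}(C)\to \mathrm{Spd}(\mathcal O_C)$ combined with the base-change axioms of the six-functor formalism.

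I anticipate essentially no new obstacle beyond bookkeeping: all substantive content has been loaded into the two preceding propositions and into the geometric fact that Schubert cells base change well along the specialization/generic fibre pair. The one point that warrants a brief check is that the gluing functors for a Schubert cell preserve suavity (ULA) and commute with the relevant base change, but this is an instance of the standard base-change axioms already implicit in forming $\mathcal D_{\mathrm{mot}}^{\mathrm{ULA}}$ on the Hecke stack.
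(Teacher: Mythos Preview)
Your proposal is correct and matches the paper's approach: the paper's proof is the one-liner ``All functors preserve suave objects over $\mathcal O_C$, and hence pass through the construction,'' which is exactly your chaining of the two preceding propositions together with the observation that the listed operations, being defined on the integral model over $\mathcal O_C$ and preserving suavity there, are carried compatibly through both base changes. Your final remark about checking that the stratum-wise gluing functors preserve suavity is precisely the content the paper compresses into that sentence.
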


\begin{proof} Use Proposition~\ref{prop:basechangeULAHloc} and simplify it using the previous proposition. All functors preserve suave objects over $\mathcal O_C$, and hence pass through the construction.
\end{proof}

The next proposition has been proved similarly in \cite[Theorem 4.8]{RicharzScholbachWitt}.

\begin{proposition} If $C$ is a completed algebraic closure of $E$, then the full subcategory
\[
\mathcal D_{MT}^{\mathrm{ULA}}(\Hloc_{G,\mathrm{Spd} C/\mathrm{Div}^1_{\mathcal Y}})\subset \mathcal D_{\mathrm{mot}}^{\mathrm{ULA}}(\Hloc_{G,\mathrm{Spd} C/\mathrm{Div}^1_{\mathcal Y}})
\]
of mixed Tate objects is stable under Verdier duality, $\otimes$, $\sHom$, and $j_! j^\ast$, $j_\ast j^\ast$, $j_! j^!$, $j_\ast j^!$ where $j$ is the locally closed immersion of any Schubert cell. It is also stable under convolution.

Moreover, the natural functor
\[
\mathcal D_{MT}^{\mathrm{ULA}}(\Hloc_{G,\mathrm{Spd} C/\mathrm{Div}^1_{\mathcal Y}})\otimes_{\mathcal D_{MT}(C)^{\mathrm{dual}}} \mathcal D_{\mathrm{mot}}(C)^{\mathrm{dual}}\to \mathcal D_{\mathrm{mot}}^{\mathrm{ULA}}(\Hloc_{G,\mathrm{Spd} C/\mathrm{Div}^1_{\mathcal Y}})
\]
is an equivalence.
\end{proposition}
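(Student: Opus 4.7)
My plan is to reduce both statements to the stratification of $\Hloc_{G,\mathrm{Spd}\,C/\mathrm{Div}^1_{\mathcal Y}}$ by Schubert cells $j_\mu: \Hloc_{G,\mu}\hookrightarrow \Hloc_{G,\mathrm{Spd}\,C/\mathrm{Div}^1_{\mathcal Y}}$ indexed by dominant cocharacters $\mu\in X_\ast(T)^+$. By the previous corollary one may pass freely between the $C$-fiber and the $k$-fiber of the Hecke stack, so the relevant geometric inputs are available from the classical affine Grassmannian: each Schubert cell is cohomologically smooth of dimension $d=\langle 2\rho,\mu\rangle$ with dualizing complex isomorphic to $\mathbb{Z}[\tfrac 1 p](d)[2d]$, and each Schubert variety $\overline{\Hloc_{G,\mu}}$ admits a Bott--Samelson resolution whose fibers are iterated $\mathbb{P}^1$-bundles.

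For the stability assertions, I would first check that $(j_\mu)_!\mathbb Z[\tfrac 1 p]$ and $(j_\mu)_\ast\mathbb Z[\tfrac 1 p]$ are mixed Tate, by pushing forward along the Bott--Samelson resolution and applying the motivic decomposition statement for cellular smooth proper varieties; this is precisely the argument of \cite{RicharzScholbach}, which transports to the $B_{\mathrm{dR}}^+$-Grassmannian via the fiber comparison already in hand. Stability of $\mathcal D_{MT}^{\mathrm{ULA}}$ under $j_! j^\ast$ and $j_\ast j^\ast$ is then immediate, and the stability under $j_! j^!$ and $j_\ast j^!$ follows once we know the absolute dualizing complex is mixed Tate — which on each stratum is $\mathbb Z[\tfrac 1 p](d)[2d]$ by cohomological smoothness. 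Finally, stability under $\otimes$ and $\sHom$ reduces, using suaveness, to the fact that tensor products and internal Homs of Tate twists are again Tate.

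For convolution stability, I would use the standard convolution diagram
\[
\Hloc_G\times \Hloc_G \xleftarrow{\ p\ } \tilde{\Hloc}_G^{(2)} \xrightarrow{\ q\ } \Hloc_G,
\]
where $p$ is a twisted product fibration for a positive loop group and $q$ is ind-proper. Over each pair of Schubert strata the twisted product is again cellular with Tate-type fibers, so $p^\ast$ preserves mixed Tateness trivially and $q_\ast$ preserves it on ULA objects by the same decomposition argument. I expect this convolution step to be the main obstacle, because one needs explicit control on the iterated twisted products $\tilde{\Hloc}_G^{(n)}$ as iterated bundles compatible with the positive loop group action; fortunately this is precisely what \cite{CassvandenHoveScholbach} supplies in the Witt vector setting, and its output is formal in any well-behaved six-functor formalism.

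For the equivalence
\[
\mathcal D_{MT}^{\mathrm{ULA}}(\Hloc_{G,\mathrm{Spd}\,C/\mathrm{Div}^1_{\mathcal Y}})\otimes_{\mathcal D_{MT}(C)^{\mathrm{dual}}} \mathcal D_{\mathrm{mot}}(C)^{\mathrm{dual}} \to \mathcal D_{\mathrm{mot}}^{\mathrm{ULA}}(\Hloc_{G,\mathrm{Spd}\,C/\mathrm{Div}^1_{\mathcal Y}}),
\]
my plan is to compare compatible semiorthogonal decompositions on the two sides indexed by $\mu\in X_\ast(T)^+$. On the $\mu$-piece, cohomological smoothness of $\Hloc_{G,\mu}$ with mixed Tate dualizing complex identifies the ULA category with $\mathcal D_{\mathrm{mot}}(C)^{\mathrm{dual}}$-modules on a Tate twist, and the stratumwise statement reduces to the tautological equivalence on the base $\mathrm{Spd}\,C$. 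Assembling via the gluing data — which is preserved by the first half of the proposition — yields the global equivalence; the only real content here beyond the base case is the mixed Tateness of the gluing functors, which is exactly what the stability assertions above are designed to provide.
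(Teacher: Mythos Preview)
Your outline is correct and lands in the same place as the paper, but the paper organizes the argument around a different pivot. Where you first establish that $(j_\mu)_\ast\mathbb Z[\tfrac1p]$ is mixed Tate by invoking Bott--Samelson resolutions and the decomposition machinery of \cite{RicharzScholbach}, and then treat convolution stability as a separate step via \cite{CassvandenHoveScholbach}, the paper instead makes convolution stability the \emph{first} step and derives Verdier duality from it. Concretely, the paper passes to the affine \emph{flag} variety (a classical-flag-variety fibration over the affine Grassmannian, so the passage is harmless for mixed-Tateness), where each Demazure--Bott--Samelson resolution is literally an iterated $\mathbb P^1$-bundle and its pushforward of the constant sheaf is simultaneously Verdier self-dual and an iterated \emph{convolution} of sheaves supported on simple-reflection strata. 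Thus Verdier-duality stability reduces to convolution stability on the affine flag variety, and that in turn reduces, by writing everything in simple reflections, to the single case $s\star s$ for a simple reflection $s$---which the paper computes by hand: the relevant convolution space is $\mathbb P^1\times\mathbb P^1$ mapping to $\mathbb P^1$, with open stratum $\mathbb A^1\times\mathbb P^1\setminus\Delta(\mathbb A^1)$, visibly mixed Tate under $!$-pushforward. What your route buys is modularity (each closure property proved on its own, reusing existing literature); what the paper's route buys is self-containment, replacing the cited decomposition and convolution results by a single explicit two-line calculation, at the price of the detour through the affine flag variety. For the final base-change equivalence your stratum-by-stratum argument with mixed-Tate gluing functors is exactly what the paper does.
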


\begin{proof} Stability under derived tensor products and $j_! j^\ast$ is clear from the definition. For the other operations the key is to prove stability under Verdier duality. It suffices to check this after pullback to the affine flag variety instead, as this is a fibration in classical flag varieties over the affine Grassmannian and hence pullback commutes with Verdier duality up to Tate twist. On the affine flag variety, each Schubert variety has a Demazure--Bott--Samelson resolution which is a successive $\mathbb P^1$-fibration. Taking the direct image of $\mathbb Z[\tfrac 1p]$ along this resolution, we get a sheaf which is (up to shift and Tate twist) Verdier selfdual. It suffices to see that it is also mixed Tate. But by its definition, it is a successive convolution of mixed Tate sheaves. Thus, it suffices to prove that mixed Tate sheaves on the affine flag variety are stable under convolution (which will then also imply the same stability under convolution on the affine Grassmannian). Using as basic sheaves extensions by zero from open strata, and writing everything in terms of products of simple reflections, one has to check this only for the case of twice the same simple reflection. In that case, the geometry is completely explicit: One has a $\mathbb P^1$-fibration over $\mathbb P^1$ mapping to a $\mathbb P^1$, which in fact means that the total space is a product of two $\mathbb P^1$'s. Under this isomorphism, the open stratum (an $\mathbb A^1$-fibration over $\mathbb A^1$) corresponds to $\mathbb A^1\times \mathbb P^1\setminus \Delta(\mathbb A^1)$. The $!$-pushforward towards the first projection to $\mathbb A^1$ is then mixed Tate, as desired.

The final statement is clear on each stratum, and then follows in full as the gluing functors preserve the mixed Tate subcategories.
\end{proof}

\begin{proposition} If $C$ is a completed algebraic closure of $E$, there is a perverse $t$-structure on
\[
\mathcal D_{MT}^{\mathrm{ULA}}(\Hloc_{G,\mathrm{Spd} C/\mathrm{Div}^1_{\mathcal Y}})
\]
where an object $A$ is connective if and only if for all $\mu$, the stalk at the Schubert cell parametrized by $\mu$ is an object of
\[
\mathcal D_{MT}(C)\cong \mathcal D_{\mathrm{qc}}(\mathrm{MG}_C)
\]
lying in homological degrees $\geq \langle 2\rho,\mu\rangle$.

The convolution product preserves the connective part of the $t$-structure. The heart of the perverse $t$-structure embeds fully faithfully into $\mathcal D_{MT}^{\mathrm{ULA}}(\mathrm{Gr}_{G,\mathrm{Spd} C/\mathrm{Div}^1_{\mathcal Y}})$. With $\mathbb Q$-coefficients, we have an equivalence
\[
\bigoplus_{\mu\in X_\ast(T)^+} \mathcal D_{MT}(C,\mathbb Q)^\heartsuit\otimes \mathrm{IC}_\mu\to \mathcal D_{MT}^{\mathrm{ULA}}(\Hloc_{G,\mathrm{Spd} C/\mathrm{Div}^1_{\mathcal Y}},\mathbb Q)^\heartsuit
\]
where $\mathrm{IC}_\mu$ denotes the motivic intersection complex on the Schubert variety parametrized by $\mu$.
\end{proposition}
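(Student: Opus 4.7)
\emph{Proof plan.} I would transport the Mirkovi\'c--Vilonen style geometric Satake argument into the motivic mixed Tate setting, using the closure properties established in the preceding proposition as black boxes. The perverse $t$-structure is constructed by Beilinson--Bernstein--Deligne recollement along the Schubert stratification: for each $\mu \in X_\ast(T)^+$, let $j_\mu$ denote the locally closed inclusion of the Schubert stratum; the previous proposition guarantees that $j_{\mu!}$, $j_{\mu\ast}$, $j_\mu^\ast$, $j_\mu^!$ all preserve the mixed Tate ULA subcategory. Transport the standard $t$-structure from $\mathcal D_{MT}(C) \cong \mathcal D_{\mathrm{qc}}(X_C)$ to each stratum, shifted by $\langle 2\rho,\mu\rangle$, and glue via the standard recollement formalism; the stalk condition in the statement is precisely the resulting gluing condition. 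The intersection complex $\mathrm{IC}_\mu$ then appears as the image of $j_{\mu!}\mathbb Z[\tfrac 1p](d)[2d] \to j_{\mu\ast}\mathbb Z[\tfrac 1p](d)[2d]$ with $d = \langle 2\rho,\mu\rangle$, a distinguished object of the heart.

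For convolution preservation of connectivity, the convolution map $m : \mathrm{Gr}_{G,\mu} \widetilde{\times} \mathrm{Gr}_{G,\nu} \to \mathrm{Gr}_{G, \leq \mu+\nu}$ is stratified semi-small by the Mirkovi\'c--Vilonen dimension bound, which is a purely geometric statement about Schubert cells and carries over verbatim. Combined with motivic proper base change for $m_!$ and the standard cohomological-dimension estimate, the stalks of $A \star B$ at any $\lambda$ lie in homological degrees $\geq \langle 2\rho,\lambda\rangle$ whenever $A, B$ are connective.

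The fully faithful embedding of the heart into $\mathcal D_{MT}^{\mathrm{ULA}}(\mathrm{Gr}_{G,\mathrm{Spd} C/\mathrm{Div}^1_{\mathcal Y}})$ follows because the projection $\mathrm{Gr}_G \to \Hloc_G$ is an $L^+G$-torsor, each Schubert cell is a single $L^+G$-orbit, and $L^+G$ is pro-unipotent modulo a reductive quotient that acts trivially on the heart (by connectedness of the strata); the classifying-space cohomology that would contribute to higher equivariant Ext's between perverse sheaves then vanishes, exactly as in \cite[Section VI.6]{FarguesScholze}.

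Finally, for the $\mathbb Q$-decomposition, each $\mathrm{IC}_\mu$ admits a Demazure--Bott--Samelson resolution as a tower of $\mathbb P^1$-fibrations, so the direct image of $\mathbb Z[\tfrac 1p]$ from the resolution is a direct sum of Tate twists of shifts of $\mathrm{IC}_\nu$ for $\nu \leq \mu$; inducting on $\mu$ this shows that $\mathrm{IC}_\mu$ is pure of weight $0$. After tensoring with $\mathbb Q$, the explicit description $\mathcal D_{MT}(C,\mathbb Q) \cong \mathcal D_{\mathrm{qc}}(X_C)_{\mathbb Q}$ makes pure weight-$0$ objects semisimple, and a weight argument yields $\mathrm{Ext}^1(\mathrm{IC}_\mu \otimes M, \mathrm{IC}_\nu \otimes N) = 0$ for $\mu \neq \nu$, giving the stated direct sum decomposition. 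I expect this last step to be the main obstacle: establishing the requisite Ext-vanishing requires unpacking $\mathcal D_{MT}(C,\mathbb Q) \cong \mathcal D_{\mathrm{qc}}(X_C)_{\mathbb Q}$ carefully to identify the pure weight-$0$ part and to verify that no non-trivial extensions arise between distinct $\mathrm{IC}_\mu$ tensored with coefficients from the heart, in the absence of an off-the-shelf weight formalism on the motivic side.
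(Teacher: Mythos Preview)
Your approach is broadly workable for the first three assertions, but it differs substantially from the paper's strategy, and for the final semisimplicity statement you are missing the key shortcut.

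The paper's proof is organized around a single principle you do not invoke: on $\mathcal D_{MT}(C)$ the $\ell$-adic realization functors are \emph{faithful} and \emph{$t$-exact}. This means that once the truncation functors are shown to exist on each stratum (which reduces to the standard $t$-structure on $\mathcal D_{\mathrm{qc}}(X_C)$, exactly as you say), every remaining assertion---that the glued $t$-structure has the stated stalk characterization, that convolution preserves connectivity, that pullback to the Grassmannian is fully faithful on the heart---can be checked after $\ell$-adic realization, where it is already known from \cite{FarguesScholze}. Your direct arguments via recollement and semi-smallness are correct, and they buy you a self-contained proof that does not black-box the $\ell$-adic theory; the paper's route buys economy. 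One small caution: for the full faithfulness on the heart, the motivic classifying-space cohomology of the stabilizer does not literally vanish---the paper instead uses that it is mixed Tate (so the argument of \cite[Lemma~VI.7.3]{FarguesScholze} still goes through), which is a slightly different mechanism than the one you sketch.

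For the $\mathbb Q$-coefficient decomposition, the paper's argument is again via realization, but now phrased as \emph{descent}: with rational coefficients the heart is an abelian category linear over $\mathcal D_{MT}(C,\mathbb Q)^\heartsuit\cong \mathrm{QCoh}(\mathrm{Spec}(\mathbb Q)/\mathbb G_a\rtimes\mathbb G_m)^\heartsuit$, and its base change along the faithfully flat cover $\mathrm{Spec}(\mathbb Q_\ell)\to \mathrm{Spec}(\mathbb Q)/\mathbb G_a\rtimes\mathbb G_m$ is the category of $\mathbb Q_\ell$-perverse sheaves, which is known to be $\bigoplus_\mu \mathrm{Vect}_{\mathbb Q_\ell}\otimes\mathrm{IC}_\mu$. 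The decomposition then descends. This entirely sidesteps the weight-and-Ext computation you flag as the main obstacle; your purity approach via Bott--Samelson resolutions is not wrong in spirit, but carrying it through would require setting up a weight formalism on $\mathcal D_{MT}^{\mathrm{ULA}}$ and proving the relevant Ext-vanishing directly, which is significantly more work than the descent argument and is not needed here.
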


\begin{proof} On $\mathcal D_{MT}(C)$, the $\ell$-adic realization functors
\[
\mathcal D_{\mathrm{mot}}^{\mathrm{oc}}(-)\to \mathrm{lim}_n \mathcal D_{\mathrm{mot}}^{\mathrm{oc}}(-,\mathbb Z/\ell^n)\cong  \mathrm{lim}_n \mathcal D_{\mathrm{et}}^{\mathrm{oc}}(-,\mathbb Z/\ell^n)
\]
are (jointly, for all $\ell$) conservative and $t$-exact (with respect to the motivic $t$-structure defined (only) on $\mathcal D_{MT}(C)$). Note that these $\ell$-adic realization functors also commute with all occurring functors auch as pushforwards $j_\ast$ along open immersions, as the motivic and \'etale categories are equivalent modulo $\ell^n$ (and hence even the right adjoints functors must agree). The assertions then easily follow from the known $\ell$-adic statements. For example, the construction of the connective cover can be given by Deligne's inductive formula, using that at each stratum the required truncation operator exists via reduction to the case of $\mathcal D_{MT}(C)$. The stability under convolution immediately reduces to the $\ell$-adic case. The fully faithfulness of pullback to the affine Grassmannian follows as in \cite[Lemma VI.7.3]{FarguesScholze} using in addition that for each Schubert cell, looking at the subgroup of $L^+ G$ fixing a point in the Schubert cell, its motive is mixed Tate. Finally, with $\mathbb Q$-coefficients, we get some category linear over $\mathrm{Spec}(\mathbb Q)/\mathbb G_a\rtimes \mathbb G_m$ whose base change to $\mathbb Q_\ell$, for any $\ell\neq p$, is the category of $\mathbb Q_\ell$-perverse sheaves (defined here simply as the base change to $\mathbb Q_\ell$ of the limit of the categories with $\mathbb Z/\ell^n$-coefficients). This is known to be semisimple with simple objects given by intersection complexes (cf.~\cite[Proposition VI.7.5]{FarguesScholze}). By descent along $\mathrm{Spec}(\mathbb Q_\ell)\to \mathrm{Spec}(\mathbb Q)/\mathbb G_a\rtimes \mathbb G_m$, this gives the desired description.
\end{proof}

The following theorem is due to van den Hove \cite{vandenHoveWitt}. As his argument is a rather complicated combinatorial argument, we offer a more direct argument. However, we stress that van den Hove's result is stronger than what we prove below: for example, he works with non-\'etale sheafified motives.

\begin{theorem} The subcategory
\[
\mathcal D_{MT}^{\mathrm{ULA}}(\Hloc_{G,\mathrm{Spd} C/\mathrm{Div}^1_{\mathcal Y}})\subset \mathcal D_{\mathrm{mot}}^{\mathrm{ULA}}(\Hloc_{G,\mathrm{Spd} C/\mathrm{Div}^1_{\mathcal Y}})
\]
is stable under constant term functors. More precisely, for any parabolic $P\subset G$, an object $A\in D_{\mathrm{mot}}^{\mathrm{ULA}}(\Hloc_{G,\mathrm{Spd} C/\mathrm{Div}^1_{\mathcal Y}})$ lies in the mixed-Tate subcategory if and only if $\mathrm{CT}_P(A)$ lies in the mixed-Tate subcategory.
\end{theorem}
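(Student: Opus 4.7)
The plan is to identify $\mathrm{CT}_P$ with a hyperbolic localization and then reduce the ``if and only if'' to the case of a Borel. I would pick a generic cocharacter $\chi\colon \mathbb G_m\to Z(M)\subset G$ whose attractor on $\mathrm{Gr}_G$ is $\mathrm{Gr}_P$ and whose fixed locus is $\mathrm{Gr}_M$. By the Braden-style theorem in our $6$-functor formalism (\cite[Section IV.6]{FarguesScholze}, noted unchanged in the present paper), one has
\[
\mathrm{CT}_P(A)\simeq (q_+)_!\, p_+^\ast A,
\]
with $p_+\colon \mathrm{Gr}_P\hookrightarrow \mathrm{Gr}_G$ the attractor inclusion and $q_+\colon \mathrm{Gr}_P\to \mathrm{Gr}_M$ the limit map. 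Using transitivity $\mathrm{CT}_{B_M}\circ \mathrm{CT}_P\simeq \mathrm{CT}_B$ together with downward induction on the semisimple rank of the Levi, it will be enough to treat the Borel case $P=B$ (the case $P=G$ is trivial).

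For the forward direction, I would take $A$ mixed Tate and compute the stalk of $\mathrm{CT}_B(A)$ at a $T$-fixed point $t^\nu$ as the hyperbolic stalk $R\Gamma_c(S_\nu,A|_{S_\nu})$ over the semi-infinite orbit $S_\nu=N((t))\cdot t^\nu$. Stratifying $S_\nu=\bigsqcup_\mu (S_\nu\cap \mathrm{Gr}_G^\mu)$ and invoking stability of $\mathcal D_{MT}$ under $j_!j^\ast$ from the preceding proposition reduces the question to each piece. On a fixed Schubert cell, $A|_{\mathrm{Gr}_G^\mu}$ is $L^+G$-equivariant with mixed-Tate stabilizer, so the remaining task is to show that the motive of $S_\nu\cap \mathrm{Gr}_G^\mu$ is itself mixed Tate. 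This is the content of the Mirkovic--Vilonen affine paving, equivalently of Bialynicki--Birula applied to a regular cocharacter of $T$ (whose fixed locus on $S_\nu\cap \mathrm{Gr}_G^\mu$ is discrete).

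For the converse, assume $\mathrm{CT}_B(A)$ is mixed Tate. I would argue by downward induction on the support of $A$, picking $\mu$ maximal in $\mathrm{supp}(A)$. By Braden's theorem at the $T$-fixed point $t^\mu$, the stalk $(\mathrm{CT}_B A)_\mu$ equals $i_{t^\mu}^\ast A$ up to a Tate twist and shift coming from the attracting cell $\mathbb A^{\langle 2\rho,\mu\rangle}$ at $t^\mu$; hence $i_{t^\mu}^\ast A$ is mixed Tate, and by $L^+G$-equivariance (with mixed-Tate stabilizers, as established in the preceding proposition) so is $A|_{\mathrm{Gr}_G^\mu}$. The cofiber $A'$ in
\[
j_{\mu,!}j_\mu^\ast A\to A\to A'
\]
has strictly smaller support, and $\mathrm{CT}_B(A')$ remains mixed Tate because the already-proven forward direction shows $\mathrm{CT}_B(j_{\mu,!}j_\mu^\ast A)$ is; induction then closes the argument.

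The hardest step will be the mixed-Tateness of $S_\nu\cap \mathrm{Gr}_G^\mu$. In the $\ell$-adic setting one simply invokes purity and dimension counts, but motivically one needs an honest cellular decomposition over the base. My preferred route, parallel to the proof of the preceding proposition on convolution stability, is to resolve each $\overline{\mathrm{Gr}_G^\mu}$ by Demazure--Bott--Samelson towers of iterated $\mathbb P^1$-fibrations over $\mathrm{Div}^1_{\mathcal Y}$ and reduce the relevant $!$-pushforward computations to explicit manipulations on products of projective lines, exactly as in the convolution-stability argument.
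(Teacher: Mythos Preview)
Your reduction to $P=B$ via transitivity and your backward direction via induction on the support are both correct and match the paper's argument exactly (including the key observation that at a maximal $\mu$ in the support, the relevant semi-infinite cell through $t^\mu$ inside $\mathrm{Gr}_G^\mu$ is the full cell, so the stalk of $\mathrm{CT}_B(A)$ there recovers $i_{t^\mu}^\ast A$ up to twist).

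The forward direction, however, has a genuine gap. Your reduction to the compactly supported cohomology of $S_\nu\cap \mathrm{Gr}_G^\mu$ is fine, but the claim that this is mixed Tate via Bia{\l}ynicki--Birula is not justified: BB produces affine cells only when applied to a \emph{smooth} variety, and the intersections $S_\nu\cap \mathrm{Gr}_G^\mu$ are not smooth in general (nor are MV cycle closures). Having isolated $T$-fixed points is not enough; the attractors in a singular variety need not be affine spaces, so you do not get an affine paving for free. Your Demazure--Bott--Samelson fallback can in principle be made to work---pull the semi-infinite stratification back to the resolution and induct up the $\mathbb P^1$-tower---but this is precisely the ``rather complicated combinatorial argument'' of van den Hove that the paper is deliberately bypassing; it is substantially more involved than your sketch suggests and does not reduce to the two-reflection $\mathbb P^1\times\mathbb P^1$ computation used for convolution stability.

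The paper's route avoids this entirely. It observes that the composite functor $F=\pi_\ast\circ\mathrm{CT}_B$ is monoidal up to isomorphism (convolution goes to tensor product, proved by spreading out a dualizable sheaf on the convolution Beilinson--Drinfeld Grassmannian off the diagonal), so it suffices to check mixed-Tateness of $F$ on a set of convolution generators. With $\mathbb Q$-coefficients the IC sheaves on minuscule and quasi-minuscule Schubert varieties generate everything under convolution, and for those the MV intersections are genuinely easy: affine spaces in the minuscule case, iterated $\mathbb A^1$-fibrations plus one piece handled by the total cohomology in the quasi-minuscule case. Torsion coefficients are vacuous. This trades the hard cellularity of arbitrary MV cycles for a soft monoidality argument plus a finite explicit check.
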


\begin{proof} By transitivity of constant terms, it suffices to establish the claim on constant terms only in the case $P=B$ a Borel. Once the forward direction is established, the backwards direction is clear by induction on the support: Namely, the fibre at the most generic point in the support can be directly read off from a fibre of the constant term, see the proof of \cite[Proposition VI.4.2]{FarguesScholze}.

To prove that $\mathrm{CT}_B$ preserves the mixed-Tate subcategory, it suffices to prove that the composite with the projection along $\mathrm{Gr}_{T,\mathrm{Spd} C/\mathrm{Div}^1_{\mathcal Y}}\to \mathrm{Spd} C$ preserves this; we consider the resulting functor
\[
F: \mathcal D_{\mathrm{mot}}^{\mathrm{ULA}}(\Hloc_{G,\mathrm{Spd} C/\mathrm{Div}^1_{\mathcal Y}})\to \mathcal D_{\mathrm{mot}}(C)^{\mathrm{dual}}.
\]
The result is vacuous with torsion coefficients as then everything is mixed-Tate according to our definition, so we focus on rational coefficients.

As a starting point, we prove this directly for all minuscule or quasi-minuscule Schubert cells. Minuscule Schubert cells are classical flag varieties with Mirkovi\'c--Vilonen cycles given by actual affine spaces, so the claim is clear. In the quasi-minuscule case $\mathrm{Gr}_{G,\mu}$, the Mirkovi\'c--Vilonen cycles are either indexed by the Weyl group orbit of $\mu$, or $0$. But those coming from the Weyl group orbit of $\mu$ are always successive $\mathbb A^1$-fibrations, so again are fine. The remaining one is $0$. But note, by hyperbolic localization, the functor of taking cohomology admits a filtration whose associated graded are the constituents of the constant term functor. To show that the last term is also mixed-Tate, it is then sufficient to show that the cohomology of the quasi-minuscule Schubert cell is mixed-Tate. But like any Schubert cell, this is an affine fibration over a projective flag variety, yielding the claim.

Now we note that convolution of sheaves is taken under $F$ to tensor products, at least up to isomorphism (which will later be canonical, but currently a non-canonical isomorphism suffices). Namely, use the convolution Beilinson--Drinfeld Grassmannian to construct a dualizable motivic sheaf over $\mathrm{Spd} C\times \mathrm{Spd} C$ whose fibre over the diagonal is given by $F$ applied to the convolution, and whose fibres away from the diagonal is the tensor product of $F$ applied to the two factors. By spreading out dualizable motivic sheaves, one sees that they are locally constant around the diagonal (more precisely, pulled back via either projection $\mathrm{Spd} C\times \mathrm{Spd} C\to \mathrm{Spd} C$), yielding the desired claim.

But all sheaves in $\mathcal D_{MT}^{\mathrm{ULA}}(\Hloc_{G,\mathrm{Spd} C/\mathrm{Div}^1_{\mathcal Y}},\mathbb Q)$ are generated under convolution by sheaves supported on the minuscule and quasi-minuscule Schubert cells. Indeed, it suffices to generate the perverse sheaves, but those are sums of intersection complexes, and any one appears as a summand in such a convolution. Namely, as
\[
\bigoplus_{\mu\in X_\ast(T)^+} \mathcal D_{MT}(C,\mathbb Q)^\heartsuit\otimes \mathrm{IC}_\mu\to \mathcal D_{MT}^{\mathrm{ULA}}(\Hloc_{G,\mathrm{Spd} C/\mathrm{Div}^1_{\mathcal Y}},\mathbb Q)^\heartsuit,
\]
compatibly with the corresponding statement with $\mathbb Q_\ell$-coefficients, it suffices to show that in the $\ell$-adic case, any $\mathrm{IC}_\mu$ appears as a summand in such a convolution. But this follows from geometric Satake and the corresponding statement about representations of reductive groups (cf.~also \cite[Proposition 9.6]{NgoPolo}).
\end{proof}

For any finite set $I$, we can now consider
\[
\mathcal D_{\mathrm{mot}}^{\mathrm{oc}}(\Hloc_G^I) = \mathcal D_{\mathrm{mot}}^{\mathrm{oc}}(\Hloc_{G,\mathrm{Div}^I}),
\]
where as above $\mathrm{Div}^I := (\mathrm{Div}^1)^I$. Inside it, we have the subcategory
\[
\mathcal D_{\mathrm{mot}}^{\mathrm{ULA}}(\Hloc_G^I)\subset \mathcal D_{\mathrm{mot}}^{\mathrm{oc}}(\Hloc_G^I)
\]
of bounded complexes that are suave over $\mathrm{Div}^I$, and we can further restrict to mixed Tate sheaves
\[
\mathcal D_{MT}^{\mathrm{ULA}}(\Hloc_G^I)\subset \mathcal D_{\mathrm{mot}}^{\mathrm{ULA}}(\Hloc_G^I).
\]
This category is still stable under convolution. Also, on this subcategory, the $\ell$-adic realization functors
\[
\mathcal D_{\mathrm{mot}}(-)\to \mathrm{lim}_n \mathcal D_{\mathrm{et}}(-,\mathbb Z/\ell^n)
\]
are conservative. (Note that $\mathrm{Div}^I$ has a dense set of $C$-points where $C$ is the completed algebraic closure of $E$.) There is thus at most one $t$-structure making the $\ell$-adic realization functors $t$-exact for the relative perverse $t$-structure constructed in \cite{FarguesScholze}. We claim that this $t$-structure exists. To see this, we enlarge the category, and work with the full subcategory
\[
\mathcal D_{MT}^{\mathrm{ULA},\mathrm{strat}}(\Hloc_G^I)\subset \mathcal D_{\mathrm{mot}}^{\mathrm{oc}}(\Hloc_G^I)
\]
that is generated by the images of the fully faithful functors
\[
\mathcal D_{MT}^{\mathrm{ULA}}(\Hloc_G^J)\subset \mathcal D_{\mathrm{mot}}^{\mathrm{oc}}(\Hloc_G^J)\subset \mathcal D_{\mathrm{mot}}^{\mathrm{oc}}(\Hloc_G^I)
\]
for arbitrary surjections $I\to J$ (corresponding to sheaves pushed forward from partial diagonals $\mathrm{Div}^J\subset \mathrm{Div}^I$). This category is still stable under Verdier duality (note that Verdier duality in the absolute sense and relatively over some $\mathrm{Div}^I$ agree up to shift and Tate twist). If $j: U\hookrightarrow \mathrm{Div}^I$ is an open subset whose complement is a union of partial diagonals, then the functors $j_! j^\ast$ and $j_\ast j^\ast$ preserve $\mathcal D_{MT}^{\mathrm{ULA},\mathrm{strat}}(\Hloc_G^I)$. Indeed, the second functor reduces to the first by Verdier duality, and the first functor can be expressed using triangles in terms of the functors of the form $i_\ast i^\ast$ where $i: \mathrm{Div}^J\hookrightarrow \mathrm{Div}^I$ is a closed immersion. Now it is immediate from the definition of the category $\mathcal D_{MT}^{\mathrm{ULA},\mathrm{strat}}(\Hloc_G^I)$.

We claim now that $\mathcal D_{MT}^{\mathrm{ULA},\mathrm{strat}}(\Hloc_G^I)$ has a, necessarily unique, relatively perverse $t$-structure over $\mathrm{Div}^I$, for which the $\ell$-adic realization functors are $t$-exact. This can be proved by induction on open subsets $U\subset \mathrm{Div}^I$ as above, using that on any stratum such a $t$-structure does exist. Moreover, this $t$-structure passes to the subcategory
\[
\mathcal D_{MT}^{\mathrm{ULA}}(\Hloc_G^I)\subset \mathcal D_{MT}^{\mathrm{ULA},\mathrm{strat}}(\Hloc_G^I):
\]
As we know this on $\ell$-adic realizations, it suffices to see that an object $M$ of the heart of $\mathcal D_{MT}^{\mathrm{ULA},\mathrm{strat}}(\Hloc_G^I)$ whose $\ell$-adic realization is suave over $\mathrm{Div}^I$, is already suave over $\mathrm{Div}^I$. To see this, let $j: U\hookrightarrow \mathrm{Div}^I$ be the complement of all partial diagonals, and let $\tilde{j}$ denotes its pullback to $\Hloc_G^I$; then we claim that
\[
M\to {}^p\mathcal H^0(\tilde{j}_\ast \tilde{j}^\ast M)
\]
is an isomorphism. Both sides are objects of $\mathcal D_{MT}^{\mathrm{ULA},\mathrm{strat}}(\Hloc_G^I)$ and we know that it is an isomorphism after $\ell$-adic realizations; thus, it is an isomorphism. Now we claim that for any object $M$ in the heart of $\mathcal D_{MT}^{\mathrm{ULA},\mathrm{strat}}(\Hloc_G^I)$, the object ${}^p\mathcal H^0(\tilde{j}_\ast \tilde{j}^\ast M)$ is suave over $\mathrm{Div}^I$. As this only depends on $j^\ast M$, and $M\mapsto j^\ast M$ kills all objects coming from partial diagonals, we can assume that $M$ is suave over $\mathrm{Div}^I$. In this case it suffices to see that
\[
M\to {}^p\mathcal H^0(\tilde{j}_\ast \tilde{j}^\ast M)
\]
is an isomorphism. Again, this is true on $\ell$-adic realizations, so follows.

The preceding argument also shows that the functor $j^\ast$ is fully faithful on the heart of $\mathcal D_{MT}^{\mathrm{ULA}}(\Hloc_G^I)$. We can also restrict to the flat (over the coefficients $\mathbb Z[\tfrac 1p]$) perverse sheaves, finally leading to
\[
\mathrm{Sat}_G^I\subset \mathcal D_{MT}^{\mathrm{ULA}}(\Hloc_G^I).
\]
This is an exact monoidal category (via convolution) that is linear over the category of vector bundles on $\mathrm{MG}_{\mathrm{Div}^I}$. We can then repeat the discussion in \cite{FarguesScholze} to identify it. We note that we already showed that the functors $j^\ast$ are fully faithful, which makes it possible to repeat the construction of the fusion symmetric monoidal structure, commuting with (and hence refining, by Eckmann--Hilton) the convolution monoidal structure.

Let $\widehat{G}$ over $\mathrm{MG}_{\mathrm{Div}^1}$ be the cyclotomically twisted pinned reductive group with root datum dual to $G$. As explained above, the cyclotomically twisted pinning means that the root spaces are identified with $\mathbb Z[\tfrac 1p](1)$, not $\mathbb Z[\tfrac 1p]$. For any finite set $I$, let $\mathrm{MG}_{\mathrm{Div}^I} = \mathrm{MG}_{\mathrm{Div}^1}^{I/\mathrm{MG}_k}$ and $\widehat{G}^I$ be the corresponding reductive group over $\mathrm{MG}_{\mathrm{Div}^I}$.

\begin{theorem} Functorially in the finite set $I$, the category $\mathrm{Sat}_G^I$ is equivalent to the category of representations of the group scheme $\widehat{G}^I$ on vector bundles over $\mathrm{MG}_{\mathrm{Div}^I}$.
\end{theorem}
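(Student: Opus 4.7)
The plan is to run the Tannakian reconstruction argument of \cite[Section VI.11]{FarguesScholze}, now with the coefficient category $\mathcal{D}_{\mathrm{qc}}(X_{\mathrm{Div}^1}^{I_{/X}})$ replacing the classifying stack of the Weil group, and using the $\ell$-adic Satake equivalences of \emph{loc.~cit.} as input to identify the resulting Tannakian group. The first step is to upgrade the convolution monoidal structure on $\mathrm{Sat}_G^I$ to a symmetric monoidal structure via fusion. The preceding discussion already supplies the essential input: the functor $j^\ast$ is fully faithful on the heart, where $j$ is the inclusion of the complement of the partial diagonals in $(\mathrm{Div}^1)^I$. External products of perverse sheaves on $U$ therefore admit canonical perverse extensions via $j_{!\ast}$, and the Beilinson--Drinfeld Grassmannian identifies such an extension, restricted to the full diagonal, with the convolution product. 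Eckmann--Hilton then forces fusion and convolution to coincide and produces the commutativity constraint.

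Next, I construct a fiber functor. The constant term along a Borel $B$, graded by cocharacters of $T$, defines
\[
F \;=\; \bigoplus_\nu \mathrm{CT}_B^\nu : \mathrm{Sat}_G^I \longrightarrow \mathrm{Vect}^{X_\ast(T)}\bigl(X_{\mathrm{Div}^1}^{I_{/X}}\bigr).
\]
This functor is exact (by hyperbolic localization together with the perverse normalization), symmetric monoidal (by comparing $\mathrm{CT}_B$ of a convolution with the tensor product of the constant terms of the factors using the fusion Grassmannian), and faithful (since the $\ell$-adic realization is conservative on $\mathcal{D}_{MT}^{\mathrm{ULA}}$). The previous theorem guarantees that $F$ stays mixed Tate, hence lands in vector bundles on $X_{\mathrm{Div}^1}^{I_{/X}}$. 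Tannakian reconstruction then yields a flat affine group scheme $\mathcal{G}^{I/X}$ over $X_{\mathrm{Div}^1}^{I_{/X}}$, equipped with a distinguished Borel and maximal torus $\widehat{T}^{I_{/X}}$, such that $\mathrm{Sat}_G^I \simeq \mathrm{Rep}(\mathcal{G}^{I/X})$ functorially in $I$ and compatibly with convolution and fusion.

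It remains to identify $\mathcal{G}^{I/X}$ with $\widehat{G}^{I_{/X}}$ carrying the cyclotomic pinning. I extract the root datum from the minuscule and quasi-minuscule Schubert cells, which the preceding theorem shows are genuinely present in $\mathrm{Sat}_G$: the minuscule orbits produce the character lattice and the cocharacter grading of $\widehat{T}$, while the quasi-minuscule cells, which reduce to rank-one geometry on $\mathrm{SL}_2$, yield the root subgroups and the Chevalley relations. The main obstacle is to pin down that the root spaces are canonically $\mathbb{Z}[\tfrac 1p](1)$ and not $\mathbb{Z}[\tfrac 1p]$, i.e.\ to verify the cyclotomic twist. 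This is done by the explicit $\mathrm{SL}_2$-computation: the motivic intersection complex of the quasi-minuscule $\mathbb{P}^1$ has cohomology $\mathbb{Z}[\tfrac 1p] \oplus \mathbb{Z}[\tfrac 1p](1)[2]$, and the top Tate twist transfers under $F$ to the root space of the simple coroot. Matching the resulting isomorphism $\mathcal{G}^{I/X} \cong \widehat{G}^{I_{/X}}$ against any $\ell$-adic realization recovers the isomorphism of \cite[Theorem VI.11.1]{FarguesScholze}, and since realizations are jointly conservative on the mixed Tate subcategory, this fixes the identification canonically.
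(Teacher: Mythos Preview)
Your proposal is correct and follows essentially the same approach as the paper, which does not spell out a proof but simply instructs the reader to ``repeat the discussion in \cite{FarguesScholze}'' after noting that the fully faithfulness of $j^\ast$ on the heart enables the fusion construction and Eckmann--Hilton. Your sketch is precisely a fleshing-out of that instruction: fusion via $j_{!\ast}$ and Eckmann--Hilton, the fiber functor $\mathrm{CT}_B$ landing in mixed Tate sheaves by the preceding theorem, Tannakian reconstruction, and identification of the root datum (with its cyclotomic pinning) via the minuscule and quasi-minuscule cells together with conservativity of the $\ell$-adic realizations.
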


We also have the analogous results on compatibility with constant terms, and with Chevalley involutions.

\begin{proof} One follows the arguments in \cite[Sections VI.9 -- VI.11]{FarguesScholze}. The starting point is the version of Drinfeld's lemma, cf.~Theorem~\ref{thm:drinfeldlemma}, showing that vector bundles on $\mathrm{MG}_{\mathrm{Div}^I}$ are equivalent to heart of the above $t$-structure on mixed-Tate local systems on $\mathrm{Div}^I$. The fibre functor on $\mathrm{Sat}_G^I$ is given as usual via the direct sum $\bigoplus_{i\in \mathbb Z} R^i\pi_\ast$ where $\pi: \mathrm{Gr}_G^I\to \mathrm{Div}^I$ is the projection. If one fixes $T\leftarrow B\subset G$ (possibly after enlarging $E$, or more canonically by doing it in the universal family of $B$'s, i.e.~over the flag variety), this can also be described in terms of hyperbolic localization. In fact, as in \cite[Proposition VI.9.6]{FarguesScholze}, hyperbolic localization induces symmetric monoidal functor between Satake categories for a group $G$ and a Levi $M$, compatibly with the fibre functors.

Following \cite[Section VI.10]{FarguesScholze}, one can then write $\mathrm{Sat}_G^I$ as representations of a Hopf algebra $\mathcal H_G^I$ in vector bundles on $\mathrm{MG}_{\mathrm{Div}^I}$. The key existence of projective envelopes in bounded weight categories, \cite[Proposition VI.10.1]{FarguesScholze}, can be proved with the same argument; in fact it is slightly easier as we directly work with $\mathbb Z$-coefficients, for which the vanishing of the perverse sheaf $K$ occurring in the proof can be deduced from its vanishing with $\mathbb Q$-coefficients, where it follows from semisimplicity of the category. Moreover, we also know that $\mathcal H_G^I = \bigotimes_{i\in I} \mathcal H_G^{\{i\}}$, so we are reduced to the case $I=\ast$.

It remains to identify the group scheme $\widecheck{G}$ corresponding to the Hopf algebra $\mathcal H_G = \mathcal H_G^\ast$, following \cite[Section VI.11]{FarguesScholze}. After pullback along $\mathrm{Spec}(\mathbb Z_\ell)\to \mathrm{MG}_{\mathrm{Div}^1}$, we know that it is given by $\widehat{G}$. In particular, by faithfully flat descent, we know that $\widecheck{G}$ is reductive. As the fibre functor is graded, it also comes with a distinguished $\widecheck{T}\subset \widecheck{B}\subset \widecheck{G}$ which, again by faithfully flat descent from the known $\ell$-adic case, are a maximal torus and Borel. The root spaces come with a canonical identification with a Tate twist, by reduction to the case of rank $1$ groups, or more precisely $\mathrm{SL}_2$. This shows that $\widecheck{G}\cong \widehat{G}$ as desired.

We note that \cite[Proposition VI.12.1]{FarguesScholze} about Chevalley involutions also holds in the motivic case, as it just a condition, which can be checked after $\ell$-adic realization.
\end{proof}

\section{Synopsis}

In this section, we collect all the pieces. As above, we fix a reductive group $G$ over the nonarchimedean local field $E$ with residue field $\mathbb F_q$, and an algebraic closure $k=\overline{\mathbb F}_q$.

On the spectral side, we have the base stack $\mathrm{MG}_k/\mathbb Z[\tfrac 1p]$, and the dual group $\widehat{G}$ over $\mathrm{MG}_{\mathrm{Div}^1}$. We get the algebraic stack
\[
\mathrm{Par}_G = \mathrm{Hom}_{\mathrm{MG}_k}(\mathrm{MG}_{\mathrm{Div}^1},\ast/\widehat{G}).
\]
over $\mathrm{MG}_k$. After base change along a cover $\mathrm{Spec}(\mathbb Z[\tfrac 1p])\to \mathrm{MG}_k$, this is the stack of $L$-parameters over $\mathbb Z[\tfrac 1p]$ as discussed in \cite{DHKM}, by Corollary~\ref{cor:comparisontoDHKM}.

We have the following analogue of \cite[Theorem X.0.2]{FarguesScholze}.

\begin{theorem}\label{thm:constructspectralaction} Let $m=|\pi_0(Z(G))|$. Let $\mathcal C$ be a $\mathrm{Perf}(\mathrm{MG}_k)[\frac 1m]$-linear idempotent complete small stable $\infty$-category. Giving a $\mathrm{Perf}(\mathrm{MG}_k)$-linear action of $\mathrm{Perf}(\mathrm{Par}_G)$ on $\mathcal C$ is equivalent to giving, functorially in finite sets $I$, a monoidal exact $\mathrm{Vect}(\mathrm{MG}_{\mathrm{Div}^I})$-linear functor
\[
\mathrm{Rep}_{\mathrm{MG}_{\mathrm{Div}^I}}(\widehat{G}^I)\to \mathrm{End}_{\mathrm{Perf}(\mathrm{MG}_k)}(\mathcal C)\otimes_{\mathrm{Perf}(\mathrm{MG}_k)} D_{\mathrm{qc}}(\mathrm{MG}_{\mathrm{Div}^I})^\omega.
\]

Slightly more generally, assume that $\mathcal C$ is $\mathcal C_0$-linear for some symmetric monoidal idempotent-complete stable $\infty$-category $\mathcal C_0$ with a symmetric monoidal functor $\mathrm{Perf}(\mathrm{MG}_k)[\tfrac 1m]\to \mathcal C_0$. Then giving a $\mathcal C_0$-linear action of $\mathrm{Perf}(\mathrm{Par}_G)\otimes_{\mathrm{Perf}(\mathrm{MG}_k)} \mathcal C_0$ is equivalent to giving, functorially in finite sets $I$, a monoidal exact $\mathrm{Vect}(\mathrm{MG}_{\mathrm{Div}^I})$-linear functor
\[
\mathrm{Rep}_{\mathrm{MG}_{\mathrm{Div}^I}}(\widehat{G}^I)\to \mathrm{End}_{\mathcal C_0}(\mathcal C)\otimes_{\mathrm{Perf}(\mathrm{MG}_k)} D_{\mathrm{qc}}(\mathrm{MG}_{\mathrm{Div}^I})^\omega.
\]
\end{theorem}

\begin{proof} Indeed, there is a natural map in one direction, from actions of $\mathrm{Perf}(\mathrm{Par}_G)$ towards such maps of monoidal functors functorially in $I$. To see that this is an equivalence, we may work on the cover $\mathrm{Spec}(\mathbb Z[\tfrac 1p])\to \mathrm{MG}_k$, by descent in the variable $\mathcal C_0$. After this cover, we have the discretization $\mathrm{WD}_E^{\mathrm{disc}}$ of the Weil--Deligne group, and everything can be described in terms of representations of this group. More precisely, for each $I$ the functor
\[
D_{\mathrm{qc}}(\mathrm{MG}_{\mathrm{Div}^I}\times_{\mathrm{MG}_k} \mathrm{Spec}(\mathbb Z[\tfrac 1p]))\cong D_{\mathrm{qc}}(\ast/\mathrm{WD}_E^I)\to D_{\mathrm{qc}}(\ast/(\mathrm{WD}_E^{\mathrm{disc}})^I)
\]
is fully faithful and preserves compact objects. This yields fully faithful functors
\[\begin{aligned}
\mathrm{End}_{\mathcal C_0}(\mathcal C)\otimes_{\mathrm{Perf}(\mathrm{MG}_k)} D_{\mathrm{qc}}(\mathrm{MG}_{\mathrm{Div}^I})^\omega\cong &\ \mathrm{End}_{\mathcal C_0}(\mathcal C)\otimes D_{\mathrm{qc}}(\ast/\mathrm{WD}_E^I)^\omega\\
\hookrightarrow &\ \mathrm{End}_{\mathcal C_0}(\mathcal C)\otimes D_{\mathrm{qc}}(\ast/(\mathrm{WD}_E^{\mathrm{disc}})^I)^\omega.
\end{aligned}\]
The target contains $\mathrm{End}_{\mathcal C_0}(\mathcal C)^{(\mathrm{WD}_E^{\mathrm{disc}})^I}$ fully faithfully, and in fact monoidality guarantees that the image of $\mathrm{Rep}_{\mathrm{MG}_{\mathrm{Div}^I}}(\widehat{G}^I)$ must land inside this subcategory (as all dualizable objects are in there).

Now the same proof as in \cite[Section X.3]{FarguesScholze} applies to show the equivalence of this, a priori larger, class of data with $\mathbb Z[\tfrac 1{pm}]$-linear actions of the symmetric monoidal stable $\infty$-category $\mathrm{Perf}(\mathrm{Par}_G\times_{\mathrm{MG}_k} \mathrm{Spec}(\mathbb Z[\tfrac 1{pm}]))$. As actions by such already yield data factoring over $D_{\mathrm{qc}}(\ast/\mathrm{WD}_E^I)\subset D_{\mathrm{qc}}(\ast/(\mathrm{WD}_E^{\mathrm{disc}})^I)$, we get the desired equivalence.
\end{proof}

On the geometric side, we have the stack $\mathrm{Bun}_G$, and the $\mathcal D_{\mathrm{mot}}(k)$-linear category
\[
\mathcal D_{\mathrm{mot}}(\mathrm{Bun}_G).
\]
For any finite set $I$, the geometric Satake equivalence and Hecke operators
\[\xymatrix{
& \mathrm{Hck}_G^I\ar[dl]_{p_1}\ar[dr]^{p_2}\ar[rr]^\epsilon && \Hloc_G^I\\
\mathrm{Bun}_G && \mathrm{Bun}_G\times \mathrm{Div}^I
}\]
yield monoidal exact $\mathrm{Vect}(\mathrm{MG}_{\mathrm{Div}^I})$-linear functors
\[
\mathrm{Rep}_{\mathrm{MG}_{\mathrm{Div}^I}}(\widehat{G}^I)\cong \mathrm{Sat}_G^I\to \mathrm{End}_{\mathcal D_{\mathrm{mot}}(k)}(\mathcal D_{\mathrm{mot}}(\mathrm{Bun}_G))\otimes_{D_{\mathrm{qc}}(\mathrm{MG}_k)} D_{\mathrm{qc}}(\mathrm{MG}_{\mathrm{Div}^I})
\]
via taking $V$ to the functor
\[
T_V = p_{2\ast}(p_1^\ast(-)\otimes \epsilon^\ast \mathrm{Sat}(V)): \mathcal D_{\mathrm{mot}}(\mathrm{Bun}_G)\to \mathcal D_{\mathrm{mot}}(\mathrm{Bun}_G\times \mathrm{Div}^I).
\]
Here, the Hecke operators a priori take values in
\[
\mathcal D_{\mathrm{mot}}(\mathrm{Bun}_G\times \mathrm{Div}^I)\cong \mathcal D_{\mathrm{mot}}(\mathrm{Bun}_G)\otimes_{\mathcal D_{\mathrm{mot}}(k)} \mathcal D_{\mathrm{mot}}((\Div^1)^I),
\]
but actually take image in the tensor product with the subcategory
\[
\mathcal D_{\mathrm{mot}}(\Div^1)^{\otimes_{\mathcal D_{\mathrm{mot}}(k)} I}\subset \mathcal D_{\mathrm{mot}}((\Div^1)^I),
\]
by writing Hecke operators with several legs as composites of Hecke operators with only one leg. Taking $\mathcal C_0=\mathcal D_{\mathrm{mot}}(k)^\omega$ and $\mathcal C=\mathcal D_{\mathrm{mot}}(\mathrm{Bun}_G)^\omega$, and noting that $\mathcal C$ is preserved under Hecke operators (by monoidality of the Hecke action, which means that the left and right adjoints also exist and are also colimit-preserving Hecke operators), we get the desired data to apply Theorem~\ref{thm:constructspectralaction}.

Putting everything together yields the spectral action.

\begin{theorem} Denoting $m=|\pi_0(Z(G))|$, the above constructions yield a $\mathcal D_{\mathrm{mot}}(k)^\omega[\tfrac 1m]$-linear action of
\[
\mathrm{Perf}(\mathrm{Par}_G)\otimes_{\mathrm{Perf}(\mathrm{MG}_k)} \mathcal D_{\mathrm{mot}}(k)^\omega[\tfrac 1m]
\]
on
\[
\mathcal D_{\mathrm{mot}}(\Bun_G)^\omega[\tfrac 1m],
\]
extending the Hecke action $V\mapsto T_V$ above.
\end{theorem}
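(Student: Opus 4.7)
The plan is to assemble the spectral action by feeding the Hecke package into the unraveling discussed above the theorem. All the heavy lifting is done: the geometric Satake equivalence $\mathrm{Sat}_G^I \cong \mathrm{Rep}_{X_{\mathrm{Div}^1}^{I_{/X}}}(\widehat{G}^I)$ matches the spectral side of the correspondence in the synopsis, and the relevant categories on $\Bun_G$ have been analyzed. So the proof is essentially to check that the input of the unraveling machine can be produced, and that the output is a $\mathcal D_{\mathrm{mot}}(\overline{\mathbb F}_q)^\omega[\tfrac 1m]$-linear action as claimed.

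First, I would construct, functorially in the finite set $I$, the monoidal exact $\mathrm{Vect}(X_{\mathrm{Div}^1}^{I_{/X}})$-linear functor
\[
\mathrm{Sat}_G^I \;\longrightarrow\; \mathrm{End}_{\mathcal D_{\mathrm{mot}}(\overline{\mathbb F}_q)}(\mathcal D_{\mathrm{mot}}(\Bun_G)) \otimes_{\mathrm{Perf}(X)} D_{\mathrm{qc}}(X_{\mathrm{Div}^1}^{I_{/X}})
\]
using the Hecke correspondence. A priori, Hecke operators land in $\mathcal D_{\mathrm{mot}}(\Bun_G \times (\Div^1)^I) \cong \mathcal D_{\mathrm{mot}}(\Bun_G) \otimes_{\mathcal D_{\mathrm{mot}}(\overline{\mathbb F}_q)} \mathcal D_{\mathrm{mot}}((\Div^1)^I)$ via Proposition \ref{prop:DmotBunGbasechange}, but the argument at the end of the synopsis — decomposing Hecke operators with many legs as iterated one-leg Hecke operators — forces the image to lie in the subcategory generated by $\mathcal D_{\mathrm{mot}}(\Div^1)^{\otimes I}$, which matches $D_{\mathrm{qc}}(X_{\mathrm{Div}^1}^{I_{/X}})$ via the theorem identifying $\mathcal D_{MT}(\Div^1)$ and its $I$-fold tensor products. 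Compact objects are preserved because the Hecke action by any object of $\mathrm{Sat}_G^I$ admits both adjoints which are again colimit-preserving Hecke operators (by monoidal dualizability in $\mathrm{Sat}_G^I$).

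Second, I would invoke the unraveling argument in the body of this section. After pulling back along the cover $\mathrm{Spec}(\mathbb Z[\tfrac 1p]) \to X$, the functors above factor (by dualizability) through the fully faithful image of $D_{\mathrm{qc}}(\ast/\mathrm{WD}_E^I)$ in $D_{\mathrm{qc}}(\ast/(\mathrm{WD}_E^{\mathrm{disc}})^I)$, so the data is precisely that of a compatible family of monoidal functors $\mathrm{Rep}(\widehat{G}^I) \to \mathrm{End}_{\mathcal D_{\mathrm{mot}}(\overline{\mathbb F}_q)^\omega}(\mathcal D_{\mathrm{mot}}(\Bun_G)^\omega)^{(\mathrm{WD}_E^{\mathrm{disc}})^I}$. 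The Tannakian-style argument of \cite[Section X.3]{FarguesScholze} then upgrades this data to a $\mathbb Z[\tfrac 1p]$-linear action of $\mathrm{Perf}(\mathrm{Par}_G \times_X \mathrm{Spec}(\mathbb Z[\tfrac 1p]))$ after inverting $m = |\pi_0(Z(G))|$; this inversion enters exactly where it does in \cite{FarguesScholze}, to control the center of $\widehat{G}$ when identifying the stack of $L$-parameters with what Tannakian reconstruction produces. Finally, fppf descent along $\mathrm{Spec}(\mathbb Z[\tfrac 1p]) \to X$, together with $\mathcal D_{\mathrm{mot}}(\overline{\mathbb F}_q)^\omega$-linearity, promotes this to the claimed action of $\mathrm{Perf}(\mathrm{Par}_G) \otimes_{\mathrm{Perf}(X)} \mathcal D_{\mathrm{mot}}(\overline{\mathbb F}_q)^\omega[\tfrac 1m]$.

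The main obstacle I expect is the \emph{full functoriality in $I$}, including the fusion-vs-convolution compatibility: one must know that the Hecke action genuinely carries the fusion symmetric monoidal structure on $\mathrm{Sat}_G^I$ through to the endomorphism side, not merely the convolution structure. This is what makes the Tannakian reconstruction land in representations of $\widehat{G}^I$ rather than of a purely convolution-monoidal object, and it is the step where van den Hove's preservation of mixed Tate objects under constant terms is crucial, since it is what lets the fusion structure be constructed in the motivic setting (via the open locus $j\colon U \hookrightarrow (\Div^1)^I$ away from the partial diagonals, exactly as in the $\ell$-adic case). Once this and the one-leg factorization of Hecke operators are in place, the remaining assembly is a direct transcription of the argument of \cite[Section X.3]{FarguesScholze}.
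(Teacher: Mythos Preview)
Your proposal is correct and follows essentially the same route as the paper: the discussion preceding the theorem in the Synopsis section \emph{is} the proof, and you have reproduced its logic (construct the Hecke data landing in the right subcategory via one-leg factorization, preserve compacts by monoidality, then feed into the unraveling equivalence proved by descent to $\mathrm{Spec}(\mathbb Z[\tfrac 1p])$ and \cite[Section X.3]{FarguesScholze}). One small correction to your closing remarks: the preservation of mixed Tate objects under constant terms is what makes the \emph{fiber functor} for geometric Satake land in the right place motivically (hence enables the Tannakian identification of $\widehat{G}$), whereas the fusion structure on $\mathrm{Sat}_G^I$ is enabled by the full faithfulness of $j^\ast$ on the heart, which the paper established separately just before the Satake theorem.
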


\begin{corollary} The independence of $\ell$ conjecture \cite[Conjecture I.9.5]{FarguesScholze} holds true.
\end{corollary}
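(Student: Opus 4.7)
\emph{Proof proposal.} The plan is to deduce the corollary directly from the motivic spectral action constructed above, by combining it with $\ell$-adic realization. Passing to endomorphisms of the monoidal unit of $\mathrm{Perf}(\mathrm{Par}_G) \otimes_{\mathrm{Perf}(X)} \mathcal{D}_{\mathrm{mot}}(\overline{\mathbb{F}}_q)^\omega[\tfrac{1}{m}]$ and its action on $\mathcal{D}_{\mathrm{mot}}(\mathrm{Bun}_G)^\omega[\tfrac{1}{m}]$, I would produce a single motivic ring homomorphism from the algebra of global functions on $\mathrm{Par}_G$---which in V.~Lafforgue's formalism contains the excursion algebra---into the center of $\mathcal{D}_{\mathrm{mot}}(\mathrm{Bun}_G)^\omega[\tfrac{1}{m}]$. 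Restricting to the $b=1$ stratum and invoking the identification
\[
\mathcal{D}_{\mathrm{mot}}(\mathrm{Bun}_G^1) \cong \mathcal{D}_{\mathrm{mot}}(\overline{\mathbb{F}}_q) \otimes_{\mathcal{D}(\mathbb{Z}[\tfrac{1}{p}])} \mathcal{D}(G(E), \mathbb{Z}[\tfrac{1}{p}])
\]
from the $\mathrm{Bun}_G$ theorem, this yields a single motivic excursion-to-Bernstein-center map defined over $\mathcal{D}_{\mathrm{mot}}(\overline{\mathbb{F}}_q)[\tfrac{1}{m}]$.

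Next, for each prime $\ell \neq p$, I would apply the $\ell$-adic realization $\mathcal{D}_{\mathrm{mot}}(\overline{\mathbb{F}}_q) \to \mathcal{D}(\mathbb{Z}_\ell)$ and verify that it converts this motivic construction into the $\ell$-adic construction of \cite{FarguesScholze}. This compatibility has been engineered section by section throughout the paper: the motivic geometric Satake, the Hecke action, the Harder--Narasimhan stratification of $\mathrm{Bun}_G$, and the description of $\mathrm{Par}_G$ via $\widehat{G}$-bundles on $X_{\mathrm{Div}^1}$ were all built so that their $\ell$-adic realizations reproduce the corresponding $\ell$-adic objects. The key feature making this work is that all relevant structures live in the mixed Tate subcategory, on which the $\ell$-adic realization is $t$-exact and conservative, so that Tannakian reconstruction, the perverse $t$-structure, and the fusion structure all pass cleanly through realization.

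Finally, the excursion algebra is defined over $\mathbb{Z}[\tfrac{1}{p}]$ as (a subalgebra of) global functions on $\mathrm{Par}_G$, and the classical Bernstein center of $G(E)$ with $\mathbb{Z}[\tfrac{1}{p}]$-coefficients embeds into the motivic Bernstein center via the fully faithful $\mathcal{D}(\mathbb{Z}[\tfrac{1}{p}]) \hookrightarrow \mathcal{D}_{\mathrm{mot}}(\overline{\mathbb{F}}_q)$. Consequently, the various $\ell$-adic excursion-to-Bernstein-center maps arise from one and the same $\mathbb{Z}[\tfrac{1}{p}]$-linear map by base change along $\mathbb{Z}[\tfrac{1}{p}] \to \mathbb{Z}_\ell$, which is precisely the independence of $\ell$ claimed in \cite[Conjecture I.9.5]{FarguesScholze}. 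The only nontrivial obstacle is the realization-compatibility in the second step, especially for the fusion Satake structure and the identification of $\mathrm{Par}_G$ with the \cite{DHKM} stacks after pullback to $\mathbb{Z}[\tfrac{1}{p}]$; but these have been handled throughout by working with mixed Tate coefficients and the cyclotomically twisted pinning on $\widehat{G}$, so that once the spectral action theorem is granted the corollary is essentially formal.
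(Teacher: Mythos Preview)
Your overall strategy matches the paper's: extract from the motivic spectral action a map on endomorphisms of the unit, then check that under each $\ell$-adic realization this recovers the $\ell$-adic excursion map of \cite{FarguesScholze}. The paper also emphasizes the crucial point that the endomorphisms of the unit in $\mathcal D_{\mathrm{mot}}(\overline{\mathbb F}_q)$ are exactly $\mathbb Z[\tfrac 1p]$, so that the resulting map really is a $\mathbb Z[\tfrac 1p]$-algebra homomorphism between the two $\ell$-independent objects; you gesture at this via full faithfulness of $\mathcal D(\mathbb Z[\tfrac 1p])\hookrightarrow \mathcal D_{\mathrm{mot}}(\overline{\mathbb F}_q)$, which is the right idea but should be stated as an identification of $\pi_0\mathrm{End}(\mathbf 1)$, not merely an embedding. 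The paper also spells out how the choice of $\sqrt{q}$ in the original conjecture corresponds to trivializing the Tate twist built into $\widehat{G}$ over $X$; your reference to the cyclotomically twisted pinning is pointing at the same phenomenon but does not quite close the loop.

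There is, however, a genuine gap. Your entire argument takes place after inverting $m=|\pi_0(Z(G))|$, because that is the level at which the spectral action theorem is stated. Consequently you only establish independence of $\ell$ for primes $\ell\nmid m$, whereas \cite[Conjecture I.9.5]{FarguesScholze} concerns all $\ell\neq p$. The paper notices this explicitly and supplies the missing step: one uses $z$-extensions together with the compatibility of the excursion-to-Bernstein-center construction under central isogenies to reduce to the case where $G$ has connected center, in which case $m=1$ and the restriction disappears. Without this reduction your proof is incomplete for those finitely many primes dividing $|\pi_0(Z(G))|$.
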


\begin{proof} By using z-extensions, and the compatibility of the constructions of the map from the spectral Bernstein center to the usual Bernstein center with central isogenies, one can reduce to the case that $G$ has connected center and hence $m=1$.

The motivic spectral action maps the structure sheaf to the identity endofunctor. Passing to endomorphisms, this shows that the global sections of the structure sheaf on $\mathrm{Par}_G$ map towards the Bernstein center of $\mathcal D_{\mathrm{mot}}(\mathrm{Bun}_G)$ and hence towards the Bernstein center of the open stratum, which yields the category of $G(E)$-representations on objects of $\mathcal D_{\mathrm{mot}}(k)$. This contains the category of representations in $\mathcal D(\mathbb Z[\tfrac 1p])$ fully faithfully as the endomorphisms of the unit in $\mathcal D_{\mathrm{mot}}(k)$ are $\mathbb Z[\tfrac 1p]$. Thus, its Bernstein center is just the usual Bernstein center. In summary, we get a map
\[
\mathcal O(\mathrm{Par}_G)\to \mathcal{Z}(G(E),\mathbb Z[\tfrac 1p]).
\]
Recall that $\mathrm{Par}_G$ lives over $\mathrm{MG}_k$ which is a classifying space for the group scheme $\mathbb G_m^{\mathrm{rat}}$ that is $\mathbb G_m$ over $\mathrm{Spec}(\mathbb Q)$ but trivial modulo $n$ for $n$ prime to $p$. We note if $f: \mathrm{Par}_G\to \mathrm{MG}_k$ denotes the projection, then $f_\ast \mathcal O$ lies in the full subcategory $\mathcal D(\mathbb Z[\tfrac 1p])\subset \mathcal D_{\mathrm{qc}}(\mathrm{MG}_k)$. Indeed, this is only a nontrivial statement after base change to $\mathbb Q$, and here the coarse quotient of $\mathrm{Par}_G$ is also the coarse moduli space of continuous representations of $W_E$, cf.~\cite[Proposition 4.17 (ii)]{DHKM}, and the latter cannot involve nontrivial weights as the quotient $W_E$ of $\mathrm{WD}_E$ is defined already before passing to $\mathrm{MG}_k$. It follows that $\mathcal O(\mathrm{Par}_G)$, which is defined as the global sections of $f_\ast \mathcal O\in \mathcal D_{\mathrm{qc}}(\mathrm{MG}_k)$, also agrees with the pullback of $f_\ast \mathcal O$ along any section $\mathrm{Spec}(\mathbb Z[\tfrac 1p])\to \mathrm{MG}_k$, and hence agrees with the algebra constructed in \cite[Theorem 4.18]{DHKM}.

After base change along the \'etale realization $\mathcal D_{\mathrm{mot}}(k)\to \mathcal D(\mathbb Z/\ell^n)$, everything reduces to the \'etale formalism used in \cite{FarguesScholze}. The square root of $q$ arises by trivializing the Tate twist inherent in the dual group $\widehat{G}$ over $\mathrm{MG}_k$. This shows that the displayed map above specializes to the $\ell$-adic constructions from \cite{FarguesScholze} modulo any power $\ell^n$ of $\ell$ (although this is not strictly required, we note that the left-hand side commutes with base change by \cite[Theorem VIII.3.6]{FarguesScholze} --- a priori only on any finite union of connected components, but reduction modulo $\ell^n$ also commutes with infinite products). To show agreement even with $\mathbb Z_\ell$-coefficients, it remains to observe that $\mathcal{Z}(G(E),\mathbb Z_\ell)$ is $\ell$-adically separated. But for any open pro-$p$-subgroup $K\subset G(E)$, the Hecke algebra $\mathbb Z_\ell[K\backslash G(E)/K]$ is clearly $\ell$-adically separated, and $\mathcal Z(G(E),\mathbb Z_\ell)$ injects into the product of them (or even their centers).
\end{proof}

\bibliographystyle{amsalpha}
\bibliography{MotivicGeometrization}

\end{document}